\newtheorem{main-theorem}{Theorem}
\newtheorem{main-corollary}[main-theorem]{Corollary}
\newtheorem{main-algorithm}[main-theorem]{Algorithm}
\newtheorem{theorem}{Theorem}[section]
\newtheorem{lemma}[theorem]{Lemma}
\newtheorem{proposition}[theorem]{Proposition}
\newtheorem{definition}[theorem]{Definition}
\newtheorem{algorithm}[theorem]{Algorithm}
\theoremstyle{remark}
\newtheorem{remark}[theorem]{Remark}
\newtheorem*{acknowledgements}{Acknowledgements}
\numberwithin{equation}{section}
\newcommand{\mB}{\mathrm{B}} 
\newcommand{\IB}{\mathbb{B}}
\newcommand{\cB}{\mathcal{B}} 
\newcommand{\IC}{\mathbb{C}} 
\newcommand{\cC}{\mathcal{C}}
\newcommand{\cD}{\mathcal{D}}
\newcommand{\cE}{\mathcal{E}}
\newcommand{\cF}{\mathcal{F}}
\newcommand{\gH}{\mathfrak{H}} % spherical harmonics
\newcommand{\cL}{\mathcal{L}}
\newcommand{\cM}{\mathcal{M}}
\newcommand{\N}{\mathbb{N}}
\newcommand{\R}{\mathbb{R}}
\newcommand{\IS}{\mathbb{S}}
\newcommand{\cS}{\mathcal{S}} % Schwartz
\newcommand{\cV}{\mathcal{V}}
\newcommand{\la}{\lambda}
\newcommand{\SO}{\mathrm{SO}}
\newcommand{\norm}[1]{\lVert #1 \rVert} 
\newcommand{\normm}[1]{{\left\vert\kern-0.25ex\left\vert\kern-0.25ex\left\vert #1 \right\vert\kern-0.25ex\right\vert\kern-0.25ex\right\vert}}
 \newcommand{\br}[1]{\left \langle #1 \right \rangle} 
\newcommand{\ol}[1]{\overline{#1}}
\newcommand{\qe}{q^{\mathrm{B}}} 
\newcommand{\ve}{V^{\mathrm{B}}}
\newcommand{\rad}{\mathrm{rad}}
\DeclareMathOperator{\Spec}{\mathrm{Spec}}
\DeclareMathOperator{\Id}{Id}
\newcommand{\loc}{\mathrm{loc}}
\let\Re\relax
\DeclareMathOperator{\Re}{\mathrm{Re}}
\let\Im\relax
\DeclareMathOperator{\Im}{\mathrm{Im}}
\title[Stable factorization of the   Calderón problem]{Stable factorization of the   Calderón problem  via the Born approximation}
\author[T. Daudé]{Thierry Daudé}
\address[TD]{Laboratoire de math\'ematiques de Besan\c con, UMR CNRS 6623, Universit\'e de Franche-Comt\'e, 25030, Besan\c con, France.}
\email{thierry.daude@univ-fcomte.fr}
\author[F. Macià]{Fabricio Macià}
\address[FM]{M$^2$ASAI. Universidad Politécnica de Madrid, ETSI Navales, Avda. de la Memoria, 4, 28040, Madrid, Spain.}
\email{fabricio.macia@upm.es}
\author[C. Meroño]{Cristóbal Meroño}
\address[CM]{M$^2$ASAI. Universidad Politécnica de Madrid, ETSI Navales, Avda. de la Memoria, 4, 28040, Madrid, Spain.}
\email{cj.merono@upm.es}
\author[F. Nicoleau]{François Nicoleau}
\address[FN]{Laboratoire de Math\'ematiques Jean Leray, UMR CNRS 6629, 2 Rue de la Houssini\`ere BP 92208, F-44322 Nantes Cedex 03.}
\email{francois.nicoleau@univ-nantes.fr}
\begin{document}

\begin{abstract}
   In this article, we prove the existence of the Born approximation in the context of the radial Calderón problem for Schrödinger operators. 
   The Born approximation naturally appears as the linear component of a factorization of the Calderón problem; we show that the non-linear part, obtaining the potential from the Born approximation, enjoys several interesting properties. First, this map is local, in the sense that knowledge of the Born approximation in a neighborhood of the boundary is equivalent to knowledge of the potential in the same neighborhood, and, second, it is Hölder stable. This proves that the ill-posedness of the Calderón problem arises from the linear step, which consists in computing the Born approximation from the DtN map by solving a Hausdorff moment problem. Moreover, we present an effective algorithm to compute the potential from the Born approximation. Finally, we use the Born approximation to obtain a partial characterization of the set of DtN maps for radial potentials. The proofs of these results do not make use of Complex Geometric Optics solutions or their analogs; they are based on results from inverse spectral theory for Schrödinger operators on the half-line, in particular on the concept of $A$-amplitude introduced by Barry Simon. \\
   
    \noindent MSC2020: Primary 35R30, Secondary 35J10, 35J25, 34L40, 44A60.
\end{abstract}

\maketitle

\section{Introduction}

\subsection{The problem and the setting}
Let $\Omega\subset\R^d$, $d\geq 2$, be a smooth bounded domain, and denote by $\partial_\nu$ the outward normal unit vector field on $\partial\Omega$. The Calderón problem in $\Omega$ is the inverse problem of reconstructing a positive conductivity function $\gamma$ in the equation
\begin{equation*}  
\left\{
\begin{array}{rll}
\nabla \cdot(\gamma \nabla u)  =& 0   &\text{ in }   \Omega\subset \R^d,\\
u  =& f, &\text{ on }   \partial \Omega,
\end{array}\right.
\end{equation*}
from the knowledge of the Dirichlet to Neumann map (in what follows, the DtN map)
\begin{equation*}
    \widetilde{\Lambda}_\gamma : H^{1/2}(\partial\Omega) \ni f \longmapsto \gamma \partial_\nu u|_{\partial \Omega}\in H^{-1/2}(\partial\Omega) .
\end{equation*}
This problem goes back to Calderón, who considered it since the fifties and    published his results in 1980  in \cite{calderon}.\footnote{This reference has been reprinted in \cite{calderon_rep}.} It is well known  that this inverse problem is closely related to the problem of reconstructing a real-valued potential $V$ in the Schrödinger equation
\begin{equation}  \label{id:calderon_direct}
\left\{
\begin{array}{rll}
-\Delta v +Vv =&0    &\text{ in }   \Omega,\\
v  =&g, &\text{ on }   \partial \Omega,
\end{array}\right.
\end{equation}
from the corresponding DtN map (provided it is well-defined)
\begin{equation*}
    \Lambda_V :   H^{1/2}(\partial\Omega) \ni g \longmapsto  \partial_\nu v|_{\partial \Omega} \in H^{-1/2}(\partial\Omega) .
\end{equation*}
In fact, in the case of  regular conductivities, the conductivity problem can be reduced to the Schrödinger problem by the Liouville transform $v= \sqrt{\gamma}u$, $g = \sqrt{\gamma}f$, and with the potential $V = \frac{\Delta {\sqrt{\gamma}}}{\sqrt{\gamma}}$ (see \cite{GU92} for instance).

One can interpret the Calderón problem for the Schrödinger equation \eqref{id:calderon_direct}, also known as the Gel'fand-Calderón problem \cite{Gelfand_54},  as the problem of inverting the non-linear map
\begin{equation*}
    \Phi:  V \longmapsto \Lambda_V -\Lambda_0,
\end{equation*}
% \begin{align*}
% \Phi:  \cV   &\longrightarrow \cL(L^2(\partial\Omega)), \\
%     V &\longmapsto \Lambda_V -\Lambda_0,
% \end{align*}
where the potential $V$ lies in $\cV$, a class of real-valued potentials defined on $\Omega$, and $\Lambda_0$ is the DtN map associated with the free Laplacian ($V=0$).

 The uniqueness question for the Calderón problem, which amounts to showing the injectivity of  $\Phi$, has received a great amount of attention since the first results of Kohn and Vogelius \cite{KoVo84} and Sylvester and Uhlmann \cite{SU87}, see \Cref{sec:reconstr_intro} for more references.
 The reader can also consult the book \cite{FSUBook} for a comprehensive guide to the literature on the Calderón Problem and related inverse problems.

The inverse map $\Phi^{-1}$ is generally never globally continuous, as shown by Alessandrini \cite{Alessandrini88} in $L^\infty$ (for $L^p$ results, see \cite{Ale_Cabib_08,Faraco_Kurylev_Ruiz_14}), so the Calderón problem is an ill-posed inverse problem.
In spite of this, $\Phi^{-1}$  is conditionally stable. This means that $\Phi$ is a homeomorphism when restricted to compact subsets of potentials $K$ and that $\Phi^{-1}$ has a modulus of continuity $\omega_K$ on $\Phi(K)$.

The stability of the reconstruction of $\gamma$ or $V$ from the DtN map is the question of estimating $\omega_K$. For example, Alessandrini proved in \cite{Alessandrini88} for $d\geq 3$ that  $\omega_K$ is logarithmic  when $K \subset L^\infty$ is a ball in a certain Sobolev space, to provide the required compactness (in $d=2$, see for example, \cite{Barcelo_Barcelo_Ruiz_01}).
This result is sharp, as shown by Mandache in the case of the unit ball \cite{Mand00} or \cite{KRS21} in more general settings. In conclusion, even assuming \textit{a priori} regularity on potentials, the problem of determining the potential $V$ from the DtN map $\Lambda_V$ is still highly unstable.

In this work, we show that when $\Omega = \IB^d :=\{x\in \R^d : |x|<1 \}$ is the unit ball in Euclidean space, with $\partial\Omega=\IS^{d-1}$, $d\geq2$, and $\cV$ being a general class of real-valued radial potentials, one can factor the inverse map $\Phi^{-1}$ as:
\begin{equation}  \label{eq:diagram_1}
\begin{tikzcd}
\cM:=\Phi(\cV) \arrow[dr,"d\Phi_0^{-1}"] \arrow[rr,"\Phi^{-1}"] && \cV \\
 &  \cB \arrow[ur,"\Phi_\mB^{-1}"]
\end{tikzcd}
\qquad \qquad
\begin{tikzcd}
\Lambda_V-\Lambda_0 \; \ar[dr,|->,"d\Phi_0^{-1}"] \ar[rr,|->,"\Phi^{-1}"] && V \\
 &  \ve \ar[ur,|->,"\Phi_\mB^{-1}"]
\end{tikzcd}
\end{equation}
The map $d\Phi_0$ is the Fréchet differential of $\Phi$ at  $V=0$, which is injective, as proved  by \cite{calderon} in the conductivity problem. 

One of the main contributions of this article is showing that $d\Phi_0^{-1}$ can be extended to $\cM:=\Phi(\cV)$, the set of all operators  $\Lambda_V - \Lambda_0$  arising from potentials in $\cV$, and that it maps each $\Lambda_V - \Lambda_0$ to a function $\ve$ that is  supported on $\IB^d$. In other words, we show that there exists a function $\ve$  satisfying
\begin{equation} \label{id:born_aprox_def}
    d\Phi_{0}(\ve) =  \Lambda_V-\Lambda_0.
\end{equation}
This is a non-trivial fact, since there is no reason \textit{a priori} for $\Lambda_V-\Lambda_0$ to belong to the range of $d\Phi_{0}$, and it is the key to rigorously factorizing the problem as in \eqref{eq:diagram_1}. This is accomplished in \Cref{mt:existence}. As far as we know, the existence of this object has been postulated in the more numerically-oriented literature (see, for instance, \cite{HaSe2010}), but no rigorous proof of its existence has been provided. 

The function $\ve$ plays an analogous role to the Born approximation in scattering problems. Therefore, we will refer to $\ve$ as the Born approximation of the potential $V$. It turns out that $\ve$ is well defined even if  $\Lambda_V$ is replaced by the corresponding Cauchy data, a necessary technical step when \eqref{id:calderon_direct} has no unique solution.
Also, in \Cref{mt:existence} we prove that $\ve$ is the solution of a certain moment problem and that its moments are the eigenvalues of $\Lambda_V - \Lambda_0$. This has the remarkable consequence of implying a partial characterization result for the DtN maps of radial potentials, see \Cref{remark:characterization} and \Cref{sec:appendix_moments} for more details. We will also prove that $V-\ve$ is always a continuous function outside the origin; this property of recovery of singularities is stated in \Cref{mt:approximation}.

We now turn our attention to the map $\Phi_\mB$ in \eqref{eq:diagram_1}.
The map $\Phi_\mB :V \mapsto \ve $ is a non-linear bijection from $\cV$ onto $\cB:=d\Phi_0^{-1}(\cM)$. In particular, we will show that by inverting $\Phi_\mB$, one can determine $V$ directly from $\ve$ with Hölder stability in a $L^1$ weighted space — see Theorems 
\ref{mt:uniqueness}, \ref{mt:stability_local}, and \ref{mt:stability_global}. This stability estimate does not require compactness, only mild conditionality in $L^p$. More graphically:
\begin{equation} \label{eq:diagram_2}
\begin{tikzcd}[column sep=10em,row sep=10em]
\Lambda_V -\Lambda_0 \ar[r,|->, "\text{Conditional log stability}", "\text{Linear bijection}"']  & \ve \ar[r,|->, "\text{Hölder stability}", "\text{Non-linear}"'] & V.
\end{tikzcd}
\end{equation}
This shows that the linear part of the factorization is responsible for the instability and the ill-posedness of the radial Calderón problem. 
Exploiting this factorization, we provide an explicit reconstruction algorithm, \Cref{alg:rec}, to reconstruct $V$ from $\Lambda_V$. 

The proofs of these results do not involve the use of exponentially growing solutions of \eqref{id:calderon_direct}; instead, they are based on the approach to 1-d inverse spectral theory developed by Simon in \cite{IST1}.
These results have been carried over to the Calderón Problem for conductivities
\cite{radial_cond} and for the fixed energy Schrödinger equation \cite{MMS-M25}.
 
 \subsection{Existence of the Born approximation} 
The main difficulty in proving that \eqref{id:born_aprox_def} has a solution $\ve$ lies in showing that the map $d\Phi_{0}^{-1}$ can be extended from its natural domain—the ``tangent space'' to $\cM$, following the analogy from differential geometry—to the whole set $\cM$ of operators $\Lambda_V-\Lambda_0$. To do this, we will  reformulate \eqref{id:born_aprox_def} in terms of a moment problem, as we now show.

Assume for now  that $V\in L^\infty_\rad(\IB^d;\R)$ \footnote{We use the notation $L^p_\rad(\IB^d)$ for the closed subspace of radial functions in $L^p(\IB^d)$.} and that $\Lambda_V$ is well-defined. In that case, the spectral theory of $\Lambda_V$ is easy to describe. Denote the subspace of spherical harmonics of degree $k \in \N_0$ by $\gH_k$ (here $\N_0$ stands for the set of non-negative integers).
 The spaces $\gH_k$ are mutually orthogonal in $L^2(\IS^{d-1})$, and by separation of variables, one can show that 
\begin{equation} \label{id:DtN_map_eigenvalue}
    \Lambda_V|_{\gH_k}=\lambda_k[V] \Id_{\gH_k},
\end{equation} 
for every $k\in\N_0$. This shows that each $Y_k \in \gH_k$ is an eigenfunction of the DtN map. 
For example, when $V=0$, a direct computation gives that $\Lambda_0(Y_k)=k Y_k$, so that $\lambda_k[0] = k$ for every $k\in\N_0$.

For a radial function $F = F_0(|\cdot|)$ and any $k\in\N_0$, we define the moments:
\begin{equation} \label{id:moments_ball}
    \sigma_k[F] := \frac{1}{|\IS^{d-1}|} \int_{\IB^d} F(x) |x|^{2k}\, dx  = \int_0^1 F_0(r) r^{2k+d-1} \,dr.
\end{equation}
Let $d\Phi_0(V)$ be the Fréchet derivative of $\Phi$ at the zero potential in the direction  $V \in L^\infty_\rad(\IB^d)$. One can prove, see \Cref{sec:subsec_technical_calderon}, that $d\Phi_0(V)$ is the bounded operator in $L^2(\IS^{d-1})$ satisfying
\begin{equation} \label{e:frechet_dif}
 d\Phi_0(V)|_{\gH_k}=\sigma_k[V]\Id_{\gH_k}, \qquad \forall k\in\N_0. 
\end{equation}
This connection between the \textit{linearized Calderón problem}, recovering $V$ from the knowledge of $d\Phi_0(V)|_{\gH_k}$, and the moment problem  has been known for a long time; see, for instance, \cite{AleProc}.

Then, from \eqref{id:born_aprox_def} we obtain that
\begin{equation} \label{id:moment_prob_abstract}
\sigma_k [\ve]  = \lambda_k[V]-k, \qquad \forall k\in\N_0.
\end{equation}
Therefore, if $\ve$ exists, it must be a function/distribution in $\IB^d$ whose moments $\sigma_k[\ve]$ are the eigenvalues of $\Lambda_V-\Lambda_0$, \textit{i.e.} a solution to the Hausdorff moment problem \eqref{id:moment_prob_abstract}. 
This formal statement has also been obtained in \cite{BCMM22,BCMM23_n}, and implicitly  in \cite{DKN21_stability_steklov}.  
% In this work, we establish, among other things, that the full (non-linear) radial Calderón problem is determined by a moment problem \eqref{id:moment_prob_abstract}.
 
Given \eqref{id:moment_prob_abstract}, the existence of $\ve$ is, \textit{a priori}, far from being guaranteed: most  sequences of complex numbers are not sequences of moments  of any function; see \Cref{remark:characterization}. The fact that this is the case for a large class of radial potentials implies a strong necessary condition for an operator to be a DtN map; see \Cref{thm:characterization} for a precise statement. 

To prove that \eqref{id:moment_prob_abstract} can be solved for a large class of radial potentials, define the norm 
\begin{equation}   \label{id:q_basic_assumption}
\norm{F}_{\cV_d} : =    \sup_{j \in \N_0}  \int_{2^{-(j+1)}<|x|<2^{-j}} |F(x)||x|^{2-d} \, dx,
\end{equation}
and the associated Banach space of radial and real-valued functions:
\begin{equation}\label{id:def_Vd}
 \cV_d: = \{  V \in L^1_{\loc}(\IB^d\setminus \{0\};\R) : V  = q(|\cdot|),\; \norm{V}_{\cV_d }<\infty\}.
\end{equation}
This space is strictly larger than  $L^{d/2}_\rad(\IB^d;\R)$, since it includes the critical potential  $V(x) = c|x|^{-2}$ with $c\in \R$. In fact, $L^{d/2,\infty}_\rad(\IB^d;\R) \subset \cV_d$ for $d>2$ (see \Cref{sec:appendix_lorenz}).

Note that the DtN map is not always well-defined for every potential $V\in\cV_d$. It could happen that $0$ is in the Dirichlet spectrum of $-\Delta+V$ or that $-\Delta+V$ is not essentially self-adjoint. However, in the radial setting, one can give a meaningful definition of $\lambda_k[V]$ in terms of separation of variables that coincides with the spectral definition when $\Lambda_V$ exists (see \Cref{def:genspec}). In particular, the values $\lambda_k[V]$  can always be determined from a section of the Cauchy data of $-\Delta+V$ for all $k>k_V$, where 
\begin{equation} \label{id:beta_constant}
k_V :=  \beta_V -(d-2)/2, \qquad \beta_V := \frac{2}{|\IS^{d-1}|}\max\left( \sqrt{6|\IS^{d-1}|\norm{V}_{\cV_d} },3e \ \norm{V}_{\cV_d}   \right),
\end{equation}
(see \Cref{remark:CD}). The eventual ambiguity in the definition of $\lambda_k[V
 ]$ for some indices $k\leq k_V$  will not affect $\ve$ on $\IB^d\setminus\{0\}$. 
 
 Define the $L^1$-weighted space $L^1(\IB^d,|x|^{2s}dx)$ by the norm
 \begin{equation*}
     \norm{f}_{L^1(\IB^d,|x|^{2s} dx)} := \int_{\IB^d} |f(x)| |x|^{2s}  \,  dx,\qquad s>0,
 \end{equation*}
 and let $\cE'(\IB^d)$ be the set of  distributions on $\IB^d$ with compact support on $\IB^d$.

\begin{main-theorem}[Existence] \label{mt:existence}
Let $d\ge 2$ and $V \in \cV_d$; then the following holds. 
\begin{enumerate}[i)]
\item There exists a $k_0 \le k_V +1$ and a radial function $\ve\in L^1(\IB^d,|x|^{2k_0}dx)$ such that
    \begin{equation} \label{id:thm_1_1}
    \sigma_k[\ve] = \lambda_k[V]-   k,     \qquad \text{for all } \, k \geq k_0. 
    \end{equation}
    \item There exists a radially symmetric distribution $V^\mB_r \in \cE'(\IB^d)+L_\rad^1(\IB^d)$  such that
    \begin{equation} \label{id:thm_1_2}
            \sigma_k[V^\mB_r] = \lambda_k[V]-k,  \quad \text{ for all }  k\in\N_0.
    \end{equation}
\end{enumerate} 
In addition, $\ve$ and $\ve_r$ are, respectively, uniquely determined by \eqref{id:thm_1_1} and \eqref{id:thm_1_2}  and $\ve_r  = \ve$  in $\IB^d \setminus \{0\}$ in the   sense of distributions.
\end{main-theorem}

This theorem shows that the Born approximation $\ve$ is a well-defined function that exists for all potentials $V \in \cV_d$. As far as we know, these are the first rigorous results on the existence of solutions of \eqref{id:born_aprox_def} and \eqref{id:moment_prob_abstract} in the literature. 

 The solution $\ve$ is unique in a strong sense:  $\ve$ is always  uniquely determined by the values $\lambda_k[V]-k$ with $k>k_L$ for any given $k_L>0$. However, in some cases, $\ve$ can be  a singular function in $x=0$, which explains why \eqref{id:thm_1_1} only holds  in general for $k>k_V$ (see Section \ref{sec:explicitexamples} for explicit examples of such behavior). 
This motivates the introduction of the distribution $\ve_r$, which is a \textit{regularization} of the Born approximation in the sense of \cite[p. 11]{GelShilVol1}, since it is a distribution that coincides identically with $\ve$ when $x\neq 0$, and vanishes outside the ball. 
Either way, \eqref{id:thm_1_1} and \eqref{id:thm_1_2} provide two closely related  rigorous interpretations  of the formal identity \eqref{id:moment_prob_abstract}.
To state \eqref{id:thm_1_2} it is necessary to extend definition \eqref{id:moments_ball} to the distributions $\cE'(\IB^d)$ by duality, see \Cref{sec:regularization} for details.
 
One advantage of introducing the regularized Born approximation $\ve_r$ is that in \Cref{mt:regularization} we show that  there is an explicit expression to compute $\ve_r$ from the spectrum of the DtN map:
\begin{equation} \label{id:VB_regularization_intro}
    \widehat{\ve_r}(\xi)
    =  2 \pi^{d/2}  \sum_{k=0}^\infty  \frac{ (-1)^k}{k! \Gamma(k+d/2)}
    \left(\frac{|\xi|}{2}\right)^{2k}   (\lambda_k[V] - k),
\end{equation}
where the following convention for the Fourier transform of functions in $L^1(\IB^d)$ is used
\begin{equation*}
 \widehat{f}(\xi) = \cF (f) (\xi) := \int_{\IB^d} f(x) e^{-ix\cdot\xi} \, dx,
\end{equation*}
with its natural extension to $\cE'(\IB^d)$.
     %\widehat{f}(\xi) = \br{ f , e^{-i (\cdot)\cdot\xi}} ,$ for $f\in \cE'(\R^d)$ (see \Cref{sec:regularization} for more details).
Identity \eqref{id:VB_regularization_intro} originates from a solution formula for the moment problem for compactly supported distributions (see \Cref{lemma:fourier_transform_identity} and \cite[Section~3]{BCMM22}) and it serves to reconstruct $\ve_r$ explicitly, and hence $\ve$ by restriction to $\IB^d\setminus\{0\}$, from the eigenvalues of $\Lambda_V$.   Formula \eqref{id:VB_regularization_intro} appeared originally in \cite[Theorem 1]{BCMM22} as a formal expression obtained  by linearizing a well-known formula to reconstruct $V$ from the DtN map using CGOs, and it has been used to numerically reconstruct $\ve$ in  \cite{BCMM23_n}.

\begin{remark} \label{remark:characterization}
The existence of solutions to the Hausdorff moment problem  is a subtle issue, since a sequence of moments must satisfy certain non-trivial necessary conditions (see, for example, \cite{Haus23} and \cite{Bor78}). Therefore,
the existence of the Born approximation obtained by \Cref{mt:existence} can be considered a kind of partial characterization of DtN operators. 
This is discussed in more detail in \Cref{sec:appendix_moments}. 
\end{remark}

%%%%%%%%%%%%%%%%%%%%%%%%%%%%%%%%%%%%%%%%%%%%%%%%%%%%%%%%%%%%%%%%%%%%%%%%%%

\subsection{ Stable reconstruction of a potential from its Born approximation} \label{sec:reconstr_intro} \

We start by establishing that the correspondence  $\Phi_\mB(V)=\ve$ depicted in \eqref{eq:diagram_1} is injective. In fact, the Born approximation contains all the necessary information to reconstruct $V$ \textit{locally} in annuli from the boundary:
\begin{equation*}
    U_b:=\{x\in\IB^d\,:\,b<|x|<1\}.
\end{equation*}
\begin{main-theorem}[Uniqueness] \label{mt:uniqueness}
Let $d\ge 2$ and $V_1,V_2 \in \cV_d$. Denote $\ve_j$ the Born approximation of $V_j$, $j=1,2$. Then, for every $b\in(0,1)$,
\begin{equation*}
V_1^\mB|_{U_b} = V_2^\mB|_{U_b}\;   
\iff 
V_1|_{U_b} = V_2|_{U_b}.
\end{equation*}
\end{main-theorem}
Since $\ve$ is uniquely determined by $\Lambda_V$, this theorem immediately implies that the DtN map uniquely determines $V$. A simple consequence of \Cref{mt:uniqueness} is that  $V =0$ on $U_b$ if and only if $\ve =0$  on  $U_b$. See \cite[Theorem 1.4]{DKN21_stability_steklov} for a related local uniqueness result.
As mentioned previously, uniqueness in the Calderón problem was proved by Kohn and Vogelius \cite{KoVo84} in the analytic class and by Sylvester and Uhlmann \cite{SU87}  in the smooth class with $d \geq 3$.  For results in $d \geq 3$ with less regularity, see, for instance, \cite{Na88, Ch90, Na92,Brown96,HabTa,Ha15,CaroRogers16}.  
The planar Calderón problem for the conductivity equation was solved by Nachman~\cite{Na96} and by Astala and Päivärinta~\cite{AP06} for $\cC^2$ and $L^\infty$ conductivities, respectively.  For Schrödinger potentials, it was solved by Bukhgeim~\cite{Bukhgeim_2008} for $\cC^1$ potentials,  and  in \cite{Blasten_2015} for $L^p$ potentials.

We now turn to the question of stability.  The following theorem shows  that the nonlinear map $\Phi_\mB^{-1}: \ve \longmapsto V $  is Hölder-continuous in the $L^1(U_b)$ norm, provided we restrict $\Phi_\mB^{-1}$ to the subset of Born approximations $\ve$ such that $ \norm{V}_{L^p(U_b)} \le N$ with $p>1$.

\begin{main-theorem}[Local stability] \label{mt:stability_local}
    Let $d\ge 2$, $1<p\le \infty $, and $b\in (0,1)$. For all $N>|\IS^{d-1}|^{1/p}$, there exist  constants $C_{d,N,p,b}>0$ and $0<\delta_{d,p,b}<1$ such that, for all  $V_1,V_2 \in \cV_d$ satisfying
    \begin{equation} \label{e:pot_bound_local}
        \max_{j=1,2} \norm{V_j}_{L^p(U_b)} \le N, \qquad \qquad     \norm{\ve_1-\ve_2}_{L^1(U_b)} <\delta_{d,p,b} ,
    \end{equation}
    one has $\norm{V_1-V_2}_{L^1(U_b)} < C_{d,N,p,b} \, \norm{\ve_1-\ve_2}_{L^1(U_b)}^{\frac{p}{2p-1}}$.
    %  \begin{equation*} 
    %    \norm{V_1-V_2}_{L^1(U_b)} < C_{d,N,p,b} \, \norm{\ve_1-\ve_2}_{L^1(U_b)}^{\frac{p}{2p-1}}.
    % \end{equation*}
\end{main-theorem}

The previous local estimate can be improved to a global estimate in the whole ball by using a weighted norm to deal with the eventual singularities of $\ve$ at the origin.
\begin{main-theorem}[Global stability] \label{mt:stability_global}
    Let $d\ge 2$, $d/2<p < \infty $, and $M > d-1$. 
    There exist $C_{d,M,p}>0$ and  $0<\alpha<1$, depending only on $M,d,p$, such that for every $V_1,V_2 \in L^{p}_\rad (\IB^d;\R)$ satisfying
    \begin{equation} \label{e:def_pot_bound}
        \begin{aligned}
            &\max_{j=1,2} \norm{V_j}_{L^p(\IB^d)} < \frac{|\IS^{d-1}|^{\frac{1}{p}}}{4} \frac{p-d/2}{p-1} M ,  \\
            &\int_{\IB^d}|\ve_1(x)-\ve_2(x)| |x|^{M-(d-2)} \,dx< |\IS^{d-1}|,
        \end{aligned}
    \end{equation}
    one has
    \begin{equation} \label{e:glob_sta}
       \norm{V_1-V_2}_{L^1(\IB^d)} <C_{d,M,p}\left(\int_{\IB^d}|\ve_1(x)-\ve_2(x)||x|^{M-(d-2)}\, dx\right)^\alpha.
    \end{equation} 
\end{main-theorem}
The fact that $\ve$ may be singular at $x=0$ means that the global stability estimate \eqref{e:glob_sta} only holds in a weighted $L^1$-space. As soon as the first estimate of \eqref{e:def_pot_bound} holds, one has $ \int_{\IB^d}|\ve_j(x)||x|^{M-(d-2)}\, dx<c\norm{V_j}_{L^p(\IB^d)}$ for $j=1,2$ and some $c>0$, so the estimate \eqref{e:glob_sta} is non-trivial (see \Cref{lemma:finitness_V}).  Also, notice that when one goes from the local to the global estimate, one loses the independence of the Hölder exponent from $M$.  
    
    %  where $U_b$ is defined in \eqref{e:us}; and the global estimate
    %  \footnote{One can take $\alpha= \dfrac{d-1 }{(p'M+ d-1) (p'(1+2M)+1) }$, as follows from the proof of the theorem.} 

Let $G_{p,M}: = \{\ve: \norm{V}_{L^p(\IB^d)}< c_{d,p}M \}$, $p>d/2$, where $c_{d,p}$  is the factor in the RHS of the first inequality in \eqref{e:def_pot_bound}. \Cref{mt:stability_global} proves that  $\Phi_\mB^{-1} : \ve\longmapsto V$ is Hölder continuous from $G_{p,M} \subset L^1(\IB^d,|x|^{M-(d-2)}dx)$ to $L^1(\IB^d)$.
An important feature of this result is that it requires a very mild condition on the potentials; only a uniform $L^p(\IB^d)$ bound (the first inequality in \eqref{e:def_pot_bound}) is needed, which does not imply that the potentials lie in a compact subset of $L^1(\IB^d)$. In contrast, the modulus of continuity of $\Phi^{-1}$ is at best logarithmic, see \cite{Mand00}\footnote{The potentials given in \cite{Mand00} are not necessarily radial, but Mandache claims that even radial potentials give counterexamples to stability (see the remark before \cite[Lemma~4]{Mand00}).}, even under the usual regularity assumptions that guarantee the compactness of the set of  potentials.
We conclude from this that the linear operator $ d\Phi_0^{-1}:\Lambda_V-\Lambda_0 \longmapsto \ve$ is decompressing the information on $V$ contained in the DtN map and transforming the ill-posed inverse problem $\Phi^{-1}: \Lambda_V-\Lambda_0 \longmapsto V $ into the Hölder stable inverse problem  $\Phi_\mB^{-1} : \ve\longmapsto V$.

The proofs of Theorems \ref{mt:uniqueness}, \ref{mt:stability_local} and \ref{mt:stability_global} do not involve the construction of exponentially growing solutions (CGOs) commonly used in the Calderón problem. They arise from the approach to 1-$d$ inverse spectral theory, based on the notion of $A$-amplitude, introduced by Simon in \cite{IST1}, and later improved by Simon, Gesztesy, and Ramm in \cite{IST2,IST3} and by Avdonin, Mikhaylov, and Rybkin in \cite{AMR07,AM10}. 
It turns out that this object can also be interpreted as the Born approximation in the inverse problem of recovering a potential from the Weyl-Titchmarsh function of a Schrödinger operator (see \cite{MaMe24}). 
The approach of \cite{IST1} has also been applied in the context of the Steklov problem for  warped product manifolds in \cite{DHN21, DKN21_stability_steklov, DKN23, Gen2020, Gen2022}. In particular, the results in \cite{DKN21_stability_steklov} imply stability and uniqueness results for the radial Calderón problem, both for the conductivity and Schrödinger cases. 
Spectral theory methods have also  been used, for instance, in \cite{KoVo85,Syl_1992,MR3614929} for the radial Calderón problem and related inverse problems.

The ideas from \cite{IST1} can also be used to elaborate an explicit reconstruction method for the radial Calderón problem. We next present a simplified version of this method for radial $\cC^1(\IB^d)$ potentials (see \Cref{remark:reconstruction} for the general version).

\begin{main-algorithm}[Reconstruction]\label{alg:rec} Given $(\lambda_k[V])_{k\in \N_0}$ one reconstructs $V$ as follows.
\begin{enumerate}[1)]
    \item  Using \eqref{id:VB_regularization_intro} and $\ve = \ve_r$ in $\IB^d\setminus\{0\}$, one can  reconstruct $\ve$ from the eigenvalues of the DtN map $(\lambda_k[V])_{k\in \N_0}$ (or the Cauchy data when the DtN map is not well-defined). 
\item Find the unique $\cC^1$ solution of
\begin{equation}  \label{e:nonl}
     r\frac{\partial W}{\partial r} (r,s) -s\frac{\partial W}{\partial s} (r,s)   =  s^2\int_{r}^1 W\left (\frac{r}{\nu},s\right)W(\nu,s) \frac{d\nu}{\nu}, \qquad r,s \in (0,1),
\end{equation}
such that $W(|x|,1) = \ve(x)$. As it turns out, $W(|x|,s) = s^{-2}[V_s]^\mB(x)$ where $V_s(x) = s^2V(sx)$. In other words, $W(\cdot,s)$ is the radial profile of the Born approximation of the dilated potential $V_s$.
\item Once $W(r,s)$ is known,  use that   
   $  W(1,|x|) =  V(x)$. This holds since, by \Cref{mt:approximation} below, the Born approximation always coincides with the potential at the boundary of $\IB^d$.
\end{enumerate}
\end{main-algorithm}
In this algorithm, one reconstructs the potential layer  by layer:  the information on $V$ that is already known is taken outside the ball by dilating the potential. This is reminiscent of the so-called  layer stripping methods used in EIT, see \cite{CNS20_layer_stripping} for references. 
One can certainly replace step 1) with any other suitable method to solve the moment problem.
One might also replace step 2) by adapting the approach in \cite{AM10}, based on the boundary control method for the wave equation, which has the advantage of relying on a linear integral equation.

%%%%%%%%%%%%%%%%%%%%%%%%%%%%%%%%%%%%%%%%%%%%%%%%%%%%%%%%%%%%%%%%%%%%%%%%%%

\subsection{Structure and approximation properties of the Born approximation}\

We now turn to investigate the qualitative behavior of  $\ve$ and its connections to the potential $V$.
  \begin{main-theorem}[Approximation properties] \label{mt:approximation}
Let $V \in \cV_d$, $d\ge 2$, such that  $V = q(|\cdot|)$, and let
\begin{equation*}
    \alpha(r) := \min \left( \beta_V, \,\int_r^1 s|q(s)|   \, ds \right).
\end{equation*}
Then  $\ve = V + F(|\cdot|)$, where $F$ is a continuous function in $(0,1]$ that  satisfies:
\begin{equation} \label{est:v_VB_estimate}
\left|F(r) \right| \le \frac{1}{r^{\alpha(r)+2}} \left( \int_r^1 s|q(s)|   \, ds \right)^2,\qquad F(1) = 0.
\end{equation}
In addition, if $q$ is $\cC^{m}$ in $(b,1]$ with $m\in \N_0$, then $F$ is in $\cC^{m+2}$ in $(b,1]$ and $F'(1^-) =0$.  
 \end{main-theorem}
 This theorem shows that $\ve$ approximates $V$ in $U_b$ with an error that depends only on $b$ and the size of the potential in $U_b$. It also implies that $\ve$ contains the discontinuities of the potential, a fact that can be observed in the numerical reconstructions of $\ve$ in \cite{BCMM23_n}.  This kind of recovery of singularities property of the Born approximation  is well known in scattering problems, see for example \cite{Paivarinta_Somersalo_1991, Ru01,Me19}. For other results of recovery of singularities  in the context of the Calderón problem, see \cite{GLSSU2018}. 

\subsection{Structure of the paper}
In \Cref{sec:Dtn_radial} we introduce the necessary background on the radial Calderón  Problem.    \Cref{mt:existence}(i), \Cref{mt:uniqueness}, and \Cref{mt:approximation} are based on the works \cite{IST1,AMR07}. All these results are proved in \Cref{sec:spectral_theory}. 
In \Cref{sec:reconstruction} we discuss a reconstruction method for the radial Calderón problem based on the Born approximation; in particular, the validity of \Cref{alg:rec} is proved there. We then use this to address in \Cref{sec:stability} the stability results for the Born approximation, Theorems \ref{mt:stability_local} and \ref{mt:stability_global}, which are based on a new stability result for the $A$-amplitude  of inverse spectral theory given in \Cref{thm:stability_A_function}. 
The regularization of the  Born approximation is introduced in \Cref{sec:regularization}, together with the proof of \Cref{mt:existence}(ii). Finally, \Cref{sec:appendix_moments} presents the consequences of our result towards giving a characterization of the set of radial DtN maps.

\begin{acknowledgements}
    FM and CJM acknowledge the support of Agencia Estatal de Investigación (Spain) through grant PID2021-124195NB-C31. FN is supported by the French Gdr Dynqua.
\end{acknowledgements}

%%%%%%%%%%%%%%%%%%%%%%%%%%%%%%%%%%%%%%%%%%%%%%%%%%%%%%%%%%%%%%%%%%%%%%%%%%

\section{The direct problem in the radial case} \label{sec:Dtn_radial}

In this section, we present several useful facts on the Dirichlet problem for Schrödinger operators with potentials $V\in\cV_d$, as well as the precise definition of the sequence $(\lambda_k[V])_{k\in\N_0}$ when the DtN map is not well-defined. 

\subsection{The DtN map, Cauchy data, and the Fréchet derivative} \ \label{sec:subsec_technical_calderon}
Consider the Dirichlet problem
\begin{equation}  \label{id:calderon_direct_ball}
\left\{
\begin{array}{rlr}
-\Delta u +Vu =& 0   &\text{ on }   \IB^d,\\
u |_{  \IS^{d-1}} =& f, &    \\
\end{array}\right.
\end{equation}
where $f\in H^{1/2}(\IS^{d-1})$ and $\IS^{d-1} = \partial \IB^d$.

The general version of the Calderón Problem for the Schrödinger equation consists of determining the potential  $V$ in \eqref{id:calderon_direct_ball}  from the Cauchy data  
\begin{multline} \label{id:cauchy_data}
    \cC(V) =\{ (f,\partial_\nu u|_{\IS^{d-1}} ) \in H^{1/2}(\IS^{d-1})\times H^{-1/2}(\IS^{d-1}): \\
    u \in H^1(\IB^d) \text{ is a solution of \eqref{id:calderon_direct_ball}}\}.
\end{multline}
Here $\partial_\nu u|_{\IS^{d-1}} $ is defined in the weak sense using that
 for all $f,g\in H^{1/2}(\IS^{d-1})$ 
 \begin{multline}   \label{id:weak_def_dtn}
 \br{g, \partial_{\nu} u|_{\IS^{d-1}}}_{H^{1/2}(\IS^{d-1}) \times H^{-1/2}(\IS^{d-1})}  \\ =
 \int_{\IB^d} \nabla u (x)  \cdot \nabla v(x) \, dx  + \int_{\IB^d} V(x)  u (x)  v(x) \, dx,
 \end{multline}
where $u$ solves \eqref{id:calderon_direct_ball}  and $v \in H^1(\IB^d)$ is any function such that $v|_{\IS^{d-1}} =g$.

The weak formulation of \eqref{id:calderon_direct_ball} and the expression \eqref{id:weak_def_dtn} are well defined if one assumes 
\begin{equation} \label{id:well_posedness}
    V \in L^{p}(\IB^d;\R) \, \text{ with } p>1 \text{ and } p\ge d/2.
\end{equation} 
In addition, if one also requires that
\begin{equation} \label{id:basic_assumption}
0\notin \Spec_{H^1_0(\IB^d)} \left( -\Delta + V \right),
\end{equation}
then there is a unique solution $u\in H^1(\IB^d)$ of \eqref{id:calderon_direct_ball} for each $f\in H^{1/2}(\IS^{d-1})$. When \eqref{id:basic_assumption} holds, one can define the Dirichlet to Neumann (or DtN map), a bounded operator
\begin{equation*}
    \Lambda_V : H^{1/2}(\IS^{d-1}) \to  H^{-1/2}(\IS^{d-1}),
\end{equation*} 
by $\Lambda_V (f) := \partial_{\nu} u|_{\IS^{d-1}}$.
In this case, the Cauchy data $\cC(V)$ coincides with the graph of the DtN map. 

Let $u_j\in H^1(\IB^d)$, $j=1,2$, be the solutions of \eqref{id:calderon_direct_ball} with $V = V_j$ and $f = f_j$, and assume \eqref{id:basic_assumption} holds for both potentials. Then,  from \eqref{id:weak_def_dtn}, one can obtain the identity
 \begin{equation} \label{id:aless}
 \br{f_1, (\Lambda_{V_1}-\Lambda_{V_2}) f_2}_{H^{1/2}(\IS^{d-1}) \times H^{-1/2}(\IS^{d-1})} =  \int_{\IB^d} (V_1(x)-V_2(x))  u_1 (x)  u_2(x) \, dx,
 \end{equation}
 known as Alessandrini's identity. Taking $V_2 =0$ and differentiating, it follows that the Fréchet derivative $d\Phi_0$ satisfies
 \begin{equation*}
 \br{f_1, d\Phi_0(V) f_2}_{H^{1/2}(\IS^{d-1}) \times H^{-1/2}(\IS^{d-1})} =  \int_{\IB^d} V(x)  w_1 (x)  w_2(x) \, dx,
 \end{equation*}
 where $w_j$ satisfies $-\Delta w_j =0 $ on $\IB^d$ and $w_j = f_j$ on  the boundary.  The previous identity is well defined for $V$ satisfying \eqref{id:well_posedness} and, if $V$ is radial, choosing $\ol{f_1} = f_2 = Y_k$ for a spherical harmonic $Y_k \in \gH_k$ one can prove \eqref{e:frechet_dif}. Under the radial assumption,  $d\Phi_0(V)$ is a compact operator in $\cL(L^2(\IS^{d-1}))$ assuming only $V\in L^1(\IB^d)$, since $\sigma_k[V] \to 0$ as $k\to \infty$.

\subsection{Generalized DtN eigenvalues in the radial case}  \ \label{sec:subsec_DtNmap} 
Note that $\cV_d$ contains potentials, for which the weak formulation \eqref{id:weak_def_dtn} does not make sense (and thus even the Cauchy data \eqref{id:cauchy_data} are not well-defined); the operator $-\Delta+V$ could even fail to be essentially self-adjoint (for instance, when $V(x)=c|x|^{-2}$ with $0\leq c + \frac{(d-1)(d-3)}{4}< \frac{3}{4}$, see (\cite{RS2}, Theorem X.11)). The rest of this section is devoted to showing that even in these situations one has a well-defined sequence $(\lambda_k[V])_{k\in\N_0}$  from which the Born approximation can be defined.

Assume for the moment that $V(x) = q(|x|)$ for some measurable function $q:(0,1) \to \R$ such that \eqref{id:well_posedness} holds.
Recall that, as in the introduction, $\gH_k$ stands for the subspace of spherical harmonics of degree $k$  in $\IS^{d-1}$. 
Let   $Y_k \in\gH_k$ then:
\begin{equation*}
-\Delta_{\IS^{d-1}} Y_k(\omega) = k(k+d-2) Y_k(\omega), \qquad \omega \in \IS^{d-1}.
\end{equation*}

Taking $f = Y_k$ in \eqref{id:calderon_direct_ball} and using a Fourier expansion in spherical harmonics of $u$, it yields that $u(x) = b_k(|x|) Y_k(x/|x|)$, where $b_k$ is a solution of
\begin{equation} \label{id:radial_b}
    -\frac{1}{r^{d-1}}\frac{d}{dr}\left(r^{d-1} \frac{d}{dr} b_k(r)\right)+ \left(\frac{k(k+d-2)}{r^2}+q(r)\right)b_k(r)=0,
\end{equation}
subject to the boundary condition $b_k(1) = 1$. 

Note that if, in addition to \eqref{id:well_posedness}, the potential satisfies \eqref{id:basic_assumption} then there exists a unique solution $b_k$ such that $b_k(|x|) Y_k(x/|x|)$ belongs to $H^1(\IB^d)$. Since the normal derivative coincides with the radial derivative in spherical coordinates, it turns out that the DtN map is well-defined and satisfies that
\begin{equation*}
    \Lambda_V (Y_k) = \partial_r b_k (1) Y_k.
\end{equation*}
This shows that $Y_k$ is an eigenfunction of $\Lambda_V$, and  that   the space $\gH_k$ is an invariant subspace of the DtN map operator  with eigenvalue 
\begin{equation}  \label{id:dtn_eigenvaule}
 \lambda_k[V] : = \partial_r b_k (1).   
\end{equation} 

We now want to understand how this can be generalized when conditions \eqref{id:well_posedness} or \eqref{id:basic_assumption} fail and the DtN map is not well-defined. The key technical point to achieve this is contained in the following result.  We recall that $k_V$ is the constant defined in \eqref{id:beta_constant}.
\begin{lemma} \label{lemma:radial_solutions}
Let $d\ge 2$, $V\in\cV_d$. Then, for every $k> k_V$  there is a unique solution $b_k$  of \eqref{id:radial_b} with $b_k(1) = 1$ such that for every $Y_k \in \gH_k$, the function $u_k(x) = b_k(|x|) Y_k(x/|x|)$ satisfies that
\begin{equation} \label{id:u_k_condition}
u_k  \in L^2(\IB^d, |x|^{-2}dx).
\end{equation}
\end{lemma}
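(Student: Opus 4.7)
The strategy is to reduce \eqref{id:radial_b} to a Schrödinger equation on the half-line via a Liouville transformation, and to construct the required $b_k$ as a Weyl-type (decaying at infinity) solution.

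\emph{Liouville transformation.} Setting $\zeta := -\log r \in (0,\infty)$ and $\psi(\zeta) := r^{(d-2)/2}b_k(r)$, a direct computation turns \eqref{id:radial_b} into
\begin{equation*}
-\psi''(\zeta) + \bigl(\nu^2 + \tilde q(\zeta)\bigr)\psi(\zeta) = 0,\qquad \zeta>0,
\end{equation*}
with $\nu := k + (d-2)/2$ and $\tilde q(\zeta) := e^{-2\zeta}q(e^{-\zeta})$. The boundary condition $b_k(1) = 1$ becomes $\psi(0) = 1$, and since $\int_0^1 |b_k(r)|^2 r^{d-2}\,dr = \int_0^\infty |\psi(\zeta)|^2 e^{-\zeta}\,d\zeta$, the condition \eqref{id:u_k_condition} is equivalent to $\psi\in L^2((0,\infty), e^{-\zeta}d\zeta)$. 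Moreover, the hypothesis $V\in\cV_d$ translates via $r=e^{-\zeta}$ into the uniform local-$L^1$ bound
\begin{equation*}
\int_{j\log 2}^{(j+1)\log 2} |\tilde q(\tau)|\,d\tau \;\leq\; \frac{\|V\|_{\cV_d}}{|\IS^{d-1}|}, \qquad j\in \N_0.
\end{equation*}

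\emph{Construction of the Weyl solution.} By Carathéodory's theorem, for each $b\in\R$ the initial value problem with $\psi_b(0)=1$ and $\psi_b'(0)=b$ admits a unique global solution depending affinely on $b$. Using the variation-of-parameters representation
\begin{equation*}
\psi_b(\zeta) = \cosh(\nu\zeta) + \frac{b}{\nu}\sinh(\nu\zeta) - \frac{1}{\nu}\int_0^\zeta \sinh\nu(\zeta-\tau)\,\tilde q(\tau)\,\psi_b(\tau)\,d\tau,
\end{equation*}
and isolating the coefficient of $e^{\nu\zeta}$ in the asymptotic behavior of $\psi_b$, the requirement that this coefficient vanish becomes the scalar equation
\begin{equation*}
\nu + b = \int_0^\infty e^{-\nu\tau}\,\tilde q(\tau)\,\psi_b(\tau)\,d\tau.
\end{equation*}
When $\psi_b$ is controlled by $Ce^{-\nu\zeta}$, the integrand decays like $e^{-2\nu\tau}|\tilde q|$; summing dyadically against the local-$L^1$ bound yields a convergent geometric series with ratio $2^{-2\nu}$. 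A contraction argument in the weighted norm $\|\varphi\| := \sup_\zeta e^{\nu\zeta}|\varphi(\zeta)|$ establishes the existence of a unique $b_*$ when $\nu$ exceeds the threshold $\beta_V$ of \eqref{id:beta_constant}. The resulting solution $\psi_*:=\psi_{b_*}$ satisfies $|\psi_*(\zeta)|\leq Ce^{-\nu\zeta}$, and hence lies in $L^2(e^{-\zeta}d\zeta)$.

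\emph{Uniqueness and conclusion.} The solution space is two-dimensional. Since the Wronskian is a nonzero constant, any solution $\tilde\psi$ linearly independent of $\psi_*$ must grow at infinity at rate at least $e^{\nu\zeta}$; for $\nu > \beta_V$ one has $2\nu -1>0$, so $\tilde\psi\notin L^2(e^{-\zeta}d\zeta)$. Thus $\psi_*$ is the unique solution satisfying both $\psi(0)=1$ and the weighted-$L^2$ condition, and $b_k(r) := r^{-(d-2)/2}\psi_*(-\log r)$ is the desired solution of \eqref{id:radial_b}.

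\emph{Main obstacle.} The most delicate step is the contraction argument producing $b_*$, which must accommodate potentials $\tilde q$ that are merely uniformly locally $L^1$ and not globally integrable---the borderline case being $V(x)=c|x|^{-2}$, for which $\tilde q\equiv c$. A naive $L^\infty$ estimate would demand $\int_0^\infty |\tilde q|<\infty$, which need not hold. The remedy is the appearance of the weight $e^{-\nu\tau}$ after isolating the $e^{\nu\zeta}$-asymptote: combined with the uniform local-$L^1$ control on $\tilde q$, this weight produces a convergent geometric sum in $2^{-\nu}$. The two branches in the definition of $\beta_V$---one linear and one square-root in $\|V\|_{\cV_d}$---arise precisely from balancing (i) the smallness required to make the linear iteration map contractive, and (ii) the size of the zeroth Picard iterate, which must remain inside the contraction ball.
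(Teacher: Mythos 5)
Your overall strategy is sound and genuinely different from the paper's at the key step. Both proofs use the same Liouville change of variables to reduce to $-\psi''+(\nu^2+\tilde q)\psi=0$ on $\R^+$, but the paper then invokes limit-point theory together with the Weyl--Titchmarsh function $M(z)$: it cites the analytic continuation of $k\mapsto M(-k^2)$ to $\Re k>\beta_Q$ from Avdonin--Mikhaylov--Rybkin, and deduces square-integrability of $u_{-k^2}$ for real $k>\beta_Q$ from $\Im M(z)=\Im z\int|u_z|^2$ and Fatou's lemma. You instead build the decaying (Jost-type) solution directly by a Volterra integral equation and a contraction in the norm $\sup_\zeta e^{\nu\zeta}|\varphi(\zeta)|$, with the uniform local-$L^1$ control on $\tilde q$ summed dyadically. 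This is a legitimate, more self-contained route, and your identification of where the two branches of $\beta_V$ should come from is the right heuristic. However, the contraction step is only asserted: to prove the lemma as stated (for \emph{every} $k>k_V$) you must actually verify that the iteration closes precisely when $\nu>\beta_V$ with the constants of \eqref{id:beta_constant}; as written this is the one quantitative claim the whole argument rests on, and it is not carried out.

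There is also a genuine gap in your uniqueness step. You correctly compute that the stated condition $u_k\in L^2(\IB^d,|x|^{-1}dx)$ translates to $\psi\in L^2(\R^+,e^{-\zeta}d\zeta)$, and you then exclude the second solution $\tilde\psi\sim e^{\nu\zeta}$ by claiming $2\nu-1>0$ follows from $\nu>\beta_V$. That implication is false: $\beta_V$ is proportional to $\max(\sqrt{\norm{V}_{\cV_d}},\norm{V}_{\cV_d})$ and can be arbitrarily small, so $\nu$ may well satisfy $0<\nu\le 1/2$, in which case $e^{\nu\zeta}\in L^2(e^{-\zeta}d\zeta)$ and your argument gives no uniqueness (indeed, for $d=2$, $k=0$, $V=0$ both solutions $1$ and $\log r$ of \eqref{id:radial_b} lie in $L^2(\IB^2,|x|^{-1}dx)$). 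The root of the trouble is the weight: the paper's own computation identifies \eqref{id:u_k_condition} with the \emph{unweighted} condition $v_k\in L^2(\R^+,dt)$, which corresponds to $u_k\in L^2(\IB^d,|x|^{-2}dx)$ rather than $|x|^{-1}dx$; with that (stronger) condition the growing branch is excluded for every $\nu>0$ and your Wronskian argument closes, provided your contraction yields the two-sided asymptotic $\psi_*(\zeta)=e^{-\nu\zeta}(1+o(1))$ and not merely the upper bound $|\psi_*|\le Ce^{-\nu\zeta}$. You should either work with the $|x|^{-2}$ weight or supply a separate argument for the borderline range $0<\nu\le 1/2$.
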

\begin{remark}
When the conclusion of \Cref{lemma:radial_solutions} holds, then  $u_k$ is an $H^1_{\loc}$ solution to
\begin{equation}  \label{id:calderon_direct_Yk}
\left\{
\begin{array}{rlr}
-\Delta u_k +Vu_k =& 0   &\text{ on }  \, \IB^d\setminus \{0\},\\
u_k |_{  \IS^{d-1}} =& Y_k. &    \\
\end{array}\right.
\end{equation}
In fact $u_k \in \cC^1(\IB^d\setminus 0)$, since $b_k\in\cC^1(0,1)$ and the spherical harmonic is smooth.
\end{remark}

\begin{remark}\label{remark:CD}
  If, in addition, $V \in L^p(\IB^d)$ with $p>1$ and $p\ge d/2$, then the solutions  $u_k$ obtained in \Cref{lemma:radial_solutions} are proper weak  $ H^1(\IB^d)$ solutions of \eqref{id:calderon_direct_Yk}; and given any  other solution $u \in H^1(\IB^d)$ of \eqref{id:calderon_direct_Yk} with the same $k$  it holds that
\begin{equation} \label{id:lambda_determination}
    \partial_r b_k (1) = (Y_k, \partial_\nu u|_{\IS^{d-1}} )_{L^2(\IS^{d-1})},
\end{equation}
for all $Y_k \in \gH_k$ with $\norm{Y_k}_{L^2(\IS^{d-1})} =1$. This assertion is proved using standard arguments from the theory of linear elliptic equations, see \Cref{lemma:radial_solutions_appendix} in \Cref{sec:appendix_lemma}.
\end{remark}
The proof of \Cref{lemma:radial_solutions} is given in \Cref{sec:subsec_potentials}. This result shows, in particular, that for a potential satisfying \eqref{id:q_basic_assumption}, problem \eqref{id:calderon_direct_Yk} always possesses a unique solution of separation of variables such that \eqref{id:u_k_condition} holds, at least for $k$ large enough (even when \eqref{id:calderon_direct_Yk} is not well-posed).
This motivates the following definition.
\begin{definition}\label{def:genspec}
Let $V\in\cV_d$ and denote by $\cB_V$ the set of indices $k\in\N_0$ such that the conclusion of $\Cref{lemma:radial_solutions}$ fails. We define: 
\begin{equation} \label{id:eigenvalues_pot}
 \lambda_k[V] := \left\{ \begin{array}{ll}
   \partial_r b_k (1),   &  k\in\N_0\setminus\cB_V\\
    k,  & k\in \cB_V.
 \end{array}\right.
\end{equation}
\end{definition}
Note that \Cref{lemma:radial_solutions} states that $\cB_V$ is at most finite. In addition, if \eqref{id:well_posedness} holds, solutions $u_k$ with $k>k_V$ constitute a well-defined  section of the Cauchy data \eqref{id:cauchy_data},  and for each of those $k$, \Cref{remark:CD} ensures that $\lambda_k[V] $ can be determined from \textit{any} solution of \eqref{id:calderon_direct_Yk} via \eqref{id:lambda_determination}. If in addition \eqref{id:basic_assumption} holds, then $(\lambda_k[V] )_{k\in\N_0}$ coincides  exactly with the spectrum of the DtN map.

%%%%%%%%%%%%%%%%%%%%%%%%%%%%%%%%%%%%%%%%%%%%%%%%%%%%%%%%%%%%%%%%%%%%%%%%%%

\subsection{Reduction to a Schrödinger operator on the half-line}  \ \label{sec:subsec_potentials}
In this section we prove \Cref{lemma:radial_solutions}.
Let $r=e^{-t}$; then, writing  
\begin{equation} \label{id:q_to_Q}
Q(t) := e^{-2t}q(e^{-t}), 
\end{equation}
so that $q(r) =r^{-2}Q(-\log r)$, we have the following. A function $b_k$ is a solution to \eqref{id:radial_b} if and only if
\begin{equation} \label{id:change_variables_radial}
v_k(t) := e^{-\frac{d-2}{2}t} b_k(e^{-t}),
\end{equation}
solves the following boundary value problem on the half-line:
\begin{equation}   \label{id:schrodinger_prob}
\left\{
\begin{array}{rlr}
-v_k''  + Q    v_k =& -\left(k+ \frac{d-2}{2}\right)^2 v_k  &\text{ on }   \R_+,\\
v_k(0) =& 1. &    \\
\end{array}\right.
\end{equation} 

We will show that for potentials $Q$ satisfying that
\begin{equation} \label{id:assumption_A2}
\normm{Q} := \sup_{y>0} \int_y^{y+1} |Q(t)| \, dt <\infty,
\end{equation}
problem \eqref{id:schrodinger_prob} possesses a unique solution provided that $k$ is big enough (see \Cref{lemma:Schrodinger_exist}). Performing a change of variables will yield \Cref{lemma:radial_solutions}.

Assumption \eqref{id:assumption_A2} comes from \cite[Theorem 2]{AMR07} and essentially corresponds to case 2  in \cite{IST3}. In particular it implies that the operator 
\begin{equation} \label{id:operator}
 -\frac{d^2}{dt^2} + Q,
\end{equation}
(with Dirichlet boundary condition at $t=0$), is in the limit point case at infinity. This operator is essentially self-adjoint on $\cC^\infty_c(\R_+)$ and bounded from below, (see e.g \cite{Eastham72}, \cite{RS2}, Theorem X.7 ). This condition motivates the introduction of the norm $\norm{\cdot}_{\cV_d}$ in \eqref{id:q_basic_assumption}.

\begin{remark}\label{remark:norms}
Let $V(x) = q(|x|)$ on $\IB^d$ and $Q(t) = e^{-2t}q(e^{-t})$.
Then
\begin{equation} \label{id:norm_equivalence}
\frac{1}{3}|\IS^{d-1}|\normm{Q} \le \norm{V}_{\cV_d} \le |\IS^{d-1}|\normm{Q}.
\end{equation}
In fact it follows from \eqref{id:q_basic_assumption} that
\begin{equation*}
   \norm{V}_{\cV_d}
 = |\IS^{d-1}| \sup_{j\in \N_0}\int_{j\log 2}^{ (j+1)\log2}|Q(t)| \, dt,  \qquad j\in \N_0.
\end{equation*}
\end{remark}
The space $\cV_d$ contains the radial functions in Lorentz  space $ L^{d/2, \infty}(\IB^d)$ with $d>2$, (the weak $L^{d/2}(\IB^d)$ space), see \Cref{sec:appendix_lorenz}.

Define the constant
\begin{equation} \label{id:beta_constant_Q}
\beta_Q := 2\max\left( \sqrt{2 \normm{Q} },e\normm{Q}   \right) .
\end{equation}
\begin{lemma} \label{lemma:Schrodinger_exist} 
  Let $Q \in L^1_{\loc}(\R_+)$ such that $\normm{Q} <\infty$, and consider the equation 
  \[-u_z''  + Q    u_z = z u_z.\]
  Then, for all $z\in \IC\setminus{[-{\beta_Q}^2,\infty)}$ 
  there exists a unique solution $u_z$ such that $u_z(0) = 1$ and $u_z \in L^2(\R_+)$.
\end{lemma}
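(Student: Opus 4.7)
The plan is to deduce existence and uniqueness of $u_z$ from Weyl--Titchmarsh limit-point theory, once we have established (i) self-adjointness of the Dirichlet realization of $-d^2/dt^2 + Q$ on $\R^+$ and (ii) the quantitative spectral lower bound $\Spec\subset[-\beta_Q^2,\infty)$. Both are consequences of a single quadratic-form inequality, which forms the main analytic input.

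\textbf{Quadratic-form bound.} Partitioning $\R^+$ into unit intervals and combining the one-dimensional interpolation inequality $\sup_{[n,n+1]}|v|^2 \le (1+1/\epsilon)\|v\|_{L^2[n,n+1]}^2 + \epsilon\|v'\|_{L^2[n,n+1]}^2$ (for any $\epsilon>0$) with the control $\int_n^{n+1}|Q| \le \normm{Q}$ and summing over $n$ would yield, for every $v \in H^1(\R^+)$,
\[ \int_0^\infty |Q(t)||v(t)|^2\,dt \;\le\; \normm{Q}\Bigl[(1+\tfrac{1}{\epsilon})\|v\|_{L^2(\R^+)}^2 + \epsilon\|v'\|_{L^2(\R^+)}^2\Bigr]. \]
Choosing $\epsilon < 1/\normm{Q}$ gives a KLMN form bound of relative bound $<1$, so that the symmetric form $v\mapsto \int|v'|^2+Q|v|^2$ is closed and lower semi-bounded on $H^1_0(\R^+)$, defining a self-adjoint operator $H_Q$ (consistent with the essential self-adjointness of $-d^2/dt^2+Q$ on $\cC^\infty_c(\R^+)$ invoked after \eqref{id:assumption_A2}). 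Optimizing $\epsilon$ separately in the two regimes of $\normm{Q}$ appearing inside the maximum in \eqref{id:beta_constant_Q} would produce $H_Q\ge -\beta_Q^2$ as a quadratic form, hence $\Spec(H_Q)\subset[-\beta_Q^2,\infty)$.

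\textbf{Conclusion via limit-point.} Self-adjointness of $H_Q$ is equivalent to the equation $-u''+Qu=zu$ being in the limit-point case at $+\infty$, so by Weyl's theorem (cf.\ \cite{RS2}) the space of $L^2(\R^+)$ solutions of $-u''+Qu=zu$ is exactly one-dimensional for every $z$ in the resolvent set $\IC\setminus\Spec(H_Q)\supset\IC\setminus[-\beta_Q^2,\infty)$. Fix a nonzero generator $\psi_z$; if $\psi_z(0)=0$, then $\psi_z\in\mathrm{Dom}(H_Q)$ (the Dirichlet boundary condition being automatic) would be an eigenfunction of $H_Q$ at the value $z\notin\Spec(H_Q)$, a contradiction. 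Hence $\psi_z(0)\neq 0$ and $u_z:=\psi_z/\psi_z(0)$ is the sought $L^2$ solution with $u_z(0)=1$; uniqueness follows because any competitor is a scalar multiple of $\psi_z$ agreeing with $u_z$ at $t=0$.

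\textbf{Main obstacle.} The principal difficulty lies in matching the specific constant $\beta_Q$ of \eqref{id:beta_constant_Q}. The two expressions inside the maximum, $\sqrt{2\normm{Q}}$ and $e\normm{Q}$, correspond to the linear-in-$\normm{Q}$ and quadratic-in-$\normm{Q}$ scalings of the form bound that emerge from optimizing $\epsilon$ in the regimes $\normm{Q}\lesssim 1$ and $\normm{Q}\gg 1$, respectively. The factor $e$ in the second regime seems to require tuning the partition to intervals of length $\sim 1/\normm{Q}$ and applying the resulting sharper Sobolev inequality, rather than using unit-length intervals; handling this optimization cleanly (and tracking the numerical constants through the KLMN construction) is the place where the routine plan stops being routine.
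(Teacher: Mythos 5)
Your route is genuinely different from the paper's, and it is viable, but one of its steps is glossed over at exactly the point where the lemma has real content. For comparison: the paper does not use a spectral lower bound at all. It takes the limit-point property at infinity from the Hartman/Eastham criterion under \eqref{id:assumption_A2}, imports from \cite{AMR07} the analytic continuation of the Weyl--Titchmarsh function $k\mapsto M(-k^2)$ to $\Re k>\beta_Q$, and then transfers the $L^2$ property from $\Im z>0$ down to the real point $z=-k^2$ via the Herglotz identity $\Im M(z)=\Im z\int_0^\infty|u_z|^2\,dx$ together with Fatou's lemma. Your replacement of the analytic-continuation input by a KLMN form bound is sound, and in fact your ``main obstacle'' is not one: the unit-interval partition already gives $H_Q\ge -\bigl(\normm{Q}+\normm{Q}^2\bigr)$, and since $\beta_Q^2=\max\bigl(8\normm{Q},\,4e^2\normm{Q}^2\bigr)$ dominates $\normm{Q}+\normm{Q}^2$ in both regimes $\normm{Q}\le 1$ and $\normm{Q}\ge 1$, no retuning of the partition length or tracking of sharp constants is needed; the constants in \eqref{id:beta_constant_Q} are generous enough.

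The gap is in the sentence ``Self-adjointness of $H_Q$ is equivalent to \dots\ limit-point \dots\ so by Weyl's theorem the space of $L^2(\R^+)$ solutions \dots\ is exactly one-dimensional for every $z$ in the resolvent set.'' First, the stated equivalence is false: the KLMN form sum is self-adjoint whether or not the equation is limit point at infinity (in the limit-circle case it is simply one of many self-adjoint extensions), so the limit-point property must be established independently --- it does hold under \eqref{id:assumption_A2}, by the criterion of \cite{Eastham72} already invoked in the paper, so this is repairable by citation. Second, and more seriously, the Weyl limit-point dichotomy yields a one-dimensional $L^2$ solution space only for $\Im z\neq 0$; for real $z$ it yields \emph{at most} one-dimensional. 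The \emph{existence} of an $L^2$ solution at the real points $z=-k^2<-\beta_Q^2$ is precisely the nontrivial assertion of \Cref{lemma:Schrodinger_exist}, and it does not follow from the dichotomy: you must additionally exploit $z\in\rho(H_Q)$ to construct the Weyl solution, e.g.\ via the Green's-function representation of $(H_Q-z)^{-1}$ (apply the resolvent to a compactly supported $f$ and continue the resulting solution of the homogeneous equation leftwards from the right of $\supp f$; cf.\ \cite{Teschl}, or argue as the paper does with Fatou's lemma). With that step supplied your argument closes; the final normalization ($\psi_z(0)\neq 0$, since otherwise $z$ would be a Dirichlet eigenvalue of $H_Q$ in its resolvent set) is fine.
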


\begin{proof}
Under condition (\ref{id:assumption_A2}), it is known that $-\frac{d^2}{dx^2} +Q$ is limit point at infinity, (see \cite{IST1}). Thus, for $\Im z \not=0$, there exists  a unique solution $u_z$ with $u_z (0)=1$  which is $L^2$ at infinity. Moreover, for $\Im z \not=0$ one has, (see e.g \cite{Teschl}, Lemma 9.14):
\begin{equation} \label{id:quotient}
\Im M(z) = \Im z \int_0^{+\infty} |u_z (x) |^2 \ dx,
\end{equation}
where $M(z)$ is the so-called Weyl-Titchmarsh function (see Section \ref{sec:spectral_theory} for details). One has $\overline{M(z)} = M(\bar{z})$ and under the assumption (\ref{id:assumption_A2}), the map $k \to M(-k^2)$ has an analytic continuation to $\Re k > \beta_Q$, (see \cite{AMR07}, Section 5, Algorithm 1). 

For a fixed  $k>\beta_Q$, ($k$ real), and for $\epsilon >0$ small enough, we set
\begin{equation}
    f(\epsilon) := \Im M(-k^2+i \epsilon).
\end{equation}
Clearly, $f$ is smooth, $f(0)=0$ and using (\ref{id:quotient}), one gets
\begin{equation*}
    \frac{f(\epsilon)}{\epsilon} = \int_0^{+\infty} |u_{-k^2+i\epsilon} (x) |^2 \ dx.
\end{equation*} 
Taking $\epsilon \to 0$ and using Fatou's lemma, we see that $u_{-k^2}$ is $L^2$ at infinity. Uniqueness follows from the fact that $-\frac{d^2}{dx^2} +Q$ is limit point at infinity.
\end{proof}

We can now prove \Cref{lemma:radial_solutions}.
 \begin{proof}[Proof of $\Cref{lemma:radial_solutions}$]
As we have seen, using the change of variables  \eqref{id:change_variables_radial} in \eqref{id:radial_b},   the function $v_k(t) = e^{-\frac{d-2}{2}t} b_k(e^{-t})$     satisfies  \eqref{id:schrodinger_prob} with $Q(t) = e^{-2t}q(e^{-t})$. Since $V = q(|\cdot|)$, by  \eqref{id:norm_equivalence} we know that $\norm{V}_{\cV_d} <\infty$ implies $\normm{Q} <\infty$. Therefore, by \Cref{lemma:Schrodinger_exist}, for all $k + (d-2)/2> \beta_Q$ there exists a unique solution $v_k$ of \eqref{id:schrodinger_prob} such that $v_k \in L^2(\R_+)$. Also, from  \eqref{id:beta_constant}, \eqref{id:beta_constant_Q} and \eqref{id:norm_equivalence}, it follows that $\beta_Q \le \beta_V$.

Now, using that $ b_k(r) = r^{-\frac{d-2}{2}}  v_k(-\log r) $ it follows that
\[
\norm{v_k}_{L^2(\R_+)}^2 =  \int_0^1 |b_k(r)|^2 r^{d-3} \, dr = \norm{Y_k}_{L^2(\IS^{d-1})}^{-2}  \int_{\IB^d} |u_k(x)|^2 \frac{1}{|x|^2} \, dx,
\]
where $u_k(x) = b_k(|x|) Y_k(x/|x|)$. Thus, $b_k$ is the unique solution of \eqref{id:radial_b}  such that  \eqref{id:u_k_condition}  holds. 
\end{proof}

%%%%%%%%%%%%%%%%%%%%%%%%%%%%%%%%%%%%%%%%%%%%%%%%%%%%%%%%%%%%%%%%%%%%%%%%%%

\section{Connection with inverse spectral theory and Simon's A-amplitude}  \label{sec:spectral_theory}

This section is devoted to the proofs of Theorems \ref{mt:existence}(i), \ref{mt:uniqueness}, \ref{mt:approximation}.  This will be done by establishing a link between Simon's approach to inverse spectral theory for Schrödinger operators on the half-line and the radial Calderón problem. We also give a monotonicity result for the Born approximation in \Cref{thm:monotonicity}.

\subsection{The DtN map and Weyl-Titchmarsh function} \
By \Cref{lemma:Schrodinger_exist} 
if $Q$ satisfies \eqref{id:assumption_A2},  the Schrödinger equation
\begin{equation*}    
-u_z''  + Q    u_z = z u_z, \qquad \text{ on }   \R_+,
\end{equation*}
has a unique solution $u_z \in L^2(\R_+)$ up to a multiplicative constant whenever $\Im(z)>0$.

The Weyl-Titchmarsh function $M(z)$ associated with the half-line Schrödinger operator is defined as
\begin{equation*}
    M(z) := \frac{u_z'(0)}{u_z(0)},  \qquad z \in \IC_+ : = \{ \Im(z)>0 \}.
\end{equation*}
$M(z)$ is analytic in $\IC_+$,  and, under the assumption $\normm{Q}<\infty$, $M(z)$ has an analytic continuation to $\IC\setminus [-\beta_Q^2,\infty)$, where recall 
 $\beta_Q= 2\max( \sqrt{2\normm{Q}},e\normm{Q}  )$ (see Lemma \ref{lemma:Schrodinger_exist}).
Therefore, by  \eqref{id:schrodinger_prob} we have that
\begin{equation*}
    v_k'(0) = M\left ( -\kappa_k^2  \right )  ,
\end{equation*}
where, for simplicity, we introduce the notation
\begin{equation*}
\kappa_k = k+ \frac{d-2}{2}, \qquad k\in \N_0.
\end{equation*}

On the other hand, using \eqref{id:dtn_eigenvaule} and inverting the change of variables \eqref{id:change_variables_radial} one obtains
\begin{equation*}  
\lambda_k[V]  =  \partial_r b_k (1)   =  \partial_r \left[r^{-\frac{d-2}{2}}v_k(-\log r)\right ]\bigg |_{r=1}  = -\frac{d-2}{2} -v'_k(0).
\end{equation*} 
From this, it follows that
\begin{equation} \label{id:lambda_weyl}
\lambda_k[V] = -\frac{d-2}{2} -M(-\kappa_k^2), \qquad k>\beta_Q-\frac{d-2}{2}.
\end{equation}
This shows that when \eqref{id:q_to_Q} holds, the eigenvalues of the DtN map of $V$ coincide with the values of the $M$-function of $Q$ on a certain discrete set.

 %%%%%%%%%%%%%%%%%%%%%%%%%%

\subsection{Simon's A-amplitude}  \
Simon proved in \cite{IST1}, and was later refined in \cite{AMR07} assuming just that $Q$ satisfies \eqref{id:assumption_A2}, that there exists a function  $A\in L^1_{\loc}(\R_+)$ such that  
\begin{equation} \label{id:laplace_treansform_A}
M(-\kappa^2) = -\kappa -\int_{0}^\infty A(t) e^{-2\kappa t} \, dt \quad \text{ for } \Re (\kappa)>\beta_Q,
\end{equation}
where the integral is absolutely convergent. The function $A$ is called the $A$-amplitude of $Q$.
This function enjoys a series of interesting properties. 
\begin{theorem}[Theorem 1.5 \cite{IST1} and \cite{AMR07}] \label{thm:simon_uniqueness}
Under the assumption \eqref{id:assumption_A2}, $Q$ on $[0,a]$ is only a function of $A$ on $[0,a]$. More precisely 
\[
Q_1(t) = Q_2(t) \;   \text{ a.e. on } \, [0,a] \, \iff A_1(t) = A_2(t) \;   \text{ a.e. on } \, [0,a].
\]
\end{theorem}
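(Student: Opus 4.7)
The plan is to prove the bi-implication by establishing a two-way local correspondence between $A$ and $Q$, both directions of which ultimately reduce to Volterra-type integral equations on intervals contained in $[0,a]$. The basic structural fact one exploits is that $A(\alpha)$ and $-Q(\alpha)$ agree to leading order in $Q$, and that the remainder $A(\alpha)+Q(\alpha)$ is a strictly higher-order (at least quadratic) and \emph{local} functional of $Q$ restricted to $[0,\alpha]$. This locality is in turn a manifestation of the fact that the high-$\kappa$ asymptotics of $M(-\kappa^2)$, encoded in $A$ via the Laplace representation \eqref{id:laplace_treansform_A}, are controlled by $Q$ near $0$.

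For the forward direction ($Q_1=Q_2$ on $[0,a]\Rightarrow A_1=A_2$ on $[0,a]$), I would iterate the Volterra equation for the transformation kernel relating the free exponential $e^{-\kappa t}$ to the decaying Weyl solution $u_{-\kappa^2}$ of $-u''+Qu=-\kappa^2 u$, and then extract the Laplace coefficient in the representation \eqref{id:laplace_treansform_A}. This yields a series expansion of the form
\[
A(\alpha)=-Q(\alpha)+\sum_{n\ge 2}A_n(\alpha),
\]
in which each $A_n(\alpha)$ is an $n$-fold iterated integral of products $Q(t_1)\cdots Q(t_n)$ over a simplex-like region contained in $[0,\alpha]^n$. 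Finiteness of $\normm{Q}$ furnished by \eqref{id:assumption_A2} is exactly what is needed to bound each term so that the series converges absolutely on compact subintervals; at the Laplace-variable level this is the same information that justifies the analytic continuation of $M(-\kappa^2)$ across $\Re\kappa=\beta_Q$ invoked in \Cref{lemma:Schrodinger_exist}. Since every $A_n(\alpha)$ only involves values of $Q$ on $[0,\alpha]$, equality of $Q_1$ and $Q_2$ on $[0,a]$ forces term-by-term equality of the series, and hence $A_1=A_2$ on $[0,a]$.

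For the reverse direction ($A_1=A_2$ on $[0,a]\Rightarrow Q_1=Q_2$ on $[0,a]$), I would rewrite the same expansion as a nonlinear Volterra integral equation
\[
Q(\alpha)=-A(\alpha)+R\bigl(Q|_{[0,\alpha]}\bigr)(\alpha),
\]
where $R$ collects the terms $\sum_{n\ge 2}A_n$ and is therefore at least quadratic in its argument and local in $\alpha$. Given $A|_{[0,a]}$, I would solve this by Picard iteration in a weighted $L^1([0,\delta])$-space: the at-least-quadratic structure of $R$ makes the map a contraction for $\delta$ small (depending on $\normm{A}$), giving a unique $Q$ on $[0,\delta]$, which is then bootstrapped to the full interval by treating each successive slab $[k\delta,(k+1)\delta]$ with the already-recovered data from earlier slabs frozen inside $R$. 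Equivalently, if $Q_1,Q_2$ both produce the same $A$ on $[0,a]$, subtracting the two equations gives $Q_1-Q_2=R(Q_1)-R(Q_2)$ on $[0,\alpha]$, and a Gronwall inequality applied to $\int_0^\alpha|Q_1-Q_2|\,dt$, exploiting the quadratic vanishing of the right-hand side at $Q_1=Q_2$, forces $Q_1=Q_2$ a.e.\ on $[0,a]$.

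The main technical obstacle is that the hypothesis $\normm{Q}<\infty$ provides only uniform local $L^1$-control, so both $A$ and $Q$ live in $L^1_{\loc}$ without continuity or pointwise bounds, and all the Volterra estimates must be carried out in the $\normm{\cdot}$-norm; the ``pointwise'' identity for $A(\alpha)+Q(\alpha)$ holds only a.e., and the transformation-kernel identities must be interpreted distributionally. This is the central difficulty resolved for smooth potentials in Simon's original work \cite{IST1,IST2,IST3} and extended to the class captured by \eqref{id:assumption_A2} in \cite{AMR07}. An alternative route that sidesteps the nonlinear Volterra argument entirely is the boundary-control method of \cite{AM10}, which reformulates the recovery of $Q$ from $A$ as a single \emph{linear} integral equation whose unique solvability is considerably easier to establish.
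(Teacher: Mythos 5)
The paper does not actually prove this statement: it is imported verbatim as Theorem~1.5 of \cite{IST1} (extended to the class \eqref{id:assumption_A2} in \cite{AMR07}), so there is no internal proof to compare against. Your sketch correctly reconstructs the strategy of those cited proofs --- expand $A$ as $Q$ plus a remainder that is at least quadratic in $Q$ and depends only on $Q|_{[0,\alpha]}$, get the forward implication from locality, and get the converse from a causal Lipschitz bound on the remainder plus Gronwall; in fact both directions follow at once from the single estimate \eqref{est:A_difference}, which is Simon's (2.4) and which the paper itself quotes later in the proof of \Cref{lemma:Ats_Qts}. Two small points: with the conventions of \eqref{id:laplace_treansform_A} and \Cref{thm:simon_avdonin_error} the leading-order relation is $A(\alpha)=Q(\alpha)+E(\alpha)$, not $A(\alpha)=-Q(\alpha)+\cdots$ (harmless for injectivity, but inconsistent with the paper); and what your Gronwall step really needs is not ``quadratic vanishing'' per se but the bound $|R(Q_1)(\alpha)-R(Q_2)(\alpha)|\le C(\alpha)\int_0^\alpha|Q_1-Q_2|\,dt$ with $C$ controlled by $\int_0^\alpha|Q_i|$ --- establishing that bound for potentials that are only uniformly locally $L^1$ is the substantive content of \cite{IST1,AMR07}, to which your sketch appropriately defers.
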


\begin{theorem}[Simon \cite{IST1} and \cite{AMR07}] \label{thm:simon_avdonin_error}
Assume $Q$ satisfies \eqref{id:assumption_A2}. Then $A \in L^1_{\loc}(\R_+)$, and
\[
A(t) = Q(t) + E(t), 
\]
where $E \in \cC(\R_+)$ satisfies, for every $t>0$,
\begin{equation} \label{est:a_Q_estimate_1}
\left|E(t) \right| \le \frac{1}{2} \left( \int_0^t |Q(s)|   \, ds \right)^2\left\{ e^{2\sqrt{2 \normm{Q}} \, t} + \frac{1}{\sqrt{2\pi}}e^{2e\normm{Q} t} \right\},
\end{equation}
and
\begin{equation} \label{est:a_Q_estimate_2}
\left|E(t) \right| \le   \left( \int_0^t |Q(s)|   \, ds \right)^2 \exp \left( t\int_0^t |Q(s)|   \, ds \right).
\end{equation}
In addition, if $Q$ is of class $\cC^m$, $m\in \N_0$ in $(0,a)$, then $E$ is of class  $\cC^{m+2}$ in $(0,a)$.
\end{theorem}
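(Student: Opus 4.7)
The plan is to follow Simon's strategy from \cite{IST1}, as refined by Avdonin--Mikhaylov--Rybkin \cite{AMR07}. The central idea is to introduce the $x$-dependent Weyl--Titchmarsh function $M(-\kappa^2, x)$ associated to the truncated half-line Schrödinger operator on $[x, \infty)$ together with its Laplace representation
\[ M(-\kappa^2, x) = -\kappa - \int_0^\infty A(t,x)\, e^{-2\kappa t}\, dt, \qquad \Re\kappa > \beta_Q, \]
so that the desired $A$ equals $A(\cdot,0)$. The driving tool is the Riccati equation $\partial_x M = Q(x) + \kappa^2 - M^2$, which is the standard ODE satisfied by the Weyl function in the $x$ variable.

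First I would substitute the Laplace representation into the Riccati equation. The Laplace transform intertwines $\partial_t$ with multiplication by $2\kappa$ (modulo the boundary value at $t=0$) and the self-convolution $(A *_t A)(t,x) := \int_0^t A(s,x) A(t-s,x)\, ds$ with the square of the transform, so matching coefficients collapses the Riccati equation to the initial identification $A(0,x) = Q(x)$ together with the integro-differential equation
\[ (\partial_x - \partial_t)\, A(t,x) = \int_0^t A(s,x)\, A(t-s,x)\, ds, \qquad t > 0. \]
Integrating along the characteristic $h\mapsto(h,t-h)$ from $h=0$ to $h=t$ and using $A(0,y) = Q(y)$ yields the closed-form expression
\[ E(t) := A(t) - Q(t) = -\int_0^t\!\int_0^{t-h} A(s,h)\, A(t-h-s,h)\,ds\,dh, \]
from which the continuity of $E$ is immediate.

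Next I would convert this identity into the quantitative bounds \eqref{est:a_Q_estimate_1}--\eqref{est:a_Q_estimate_2}. Bounding the double integral reduces matters to controlling the $L^1(dt)$ norm of $A(\cdot, h)$ uniformly in $h$, for which I would run a Picard iteration on the integral equation for $A$ viewed as a Volterra problem in $t$ with convolution Duhamel kernel. The $n$-th iterate is bounded pointwise by a quantity of the form $\big(\int_0^{t+h}|Q|\big)^{n+1}/n!$, and the exponential constants $2\sqrt{2}\normm{Q}$ and $2e\normm{Q}$ arise respectively from a Cauchy--Schwarz estimate on the time convolution and from a Stirling-based sharpening of the factorial, following the technical refinement of \cite{AMR07}. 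The companion bound \eqref{est:a_Q_estimate_2} would come from the same iteration without Stirling, estimating each step by the running integral of $|Q|$.

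Finally, the regularity transfer is a bootstrap argument. Since $A = Q + E$ and $E$ is a double integral in $(s,h)$ of a quadratic expression in $A$, each derivative hitting the integrand can be absorbed by one of the two integrations, so $Q \in \cC^m$ on $(0,a)$ propagates to $E \in \cC^{m+2}$ on $(0,a)$. The main obstacle throughout is the rigorous passage from the Riccati equation for $M$ to the integral identity for $A$: under the weak hypothesis $\normm{Q}<\infty$, one must justify both the uniqueness of the Laplace inversion and the absolute convergence of the $M^2$-convolution integral, both of which rest on the analytic continuation of $M(-\kappa^2)$ to the half-plane $\Re\kappa > \beta_Q$ that is secured by \Cref{lemma:Schrodinger_exist} and \cite{AMR07}.
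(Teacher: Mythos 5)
This statement is quoted in the paper from \cite{IST1} and \cite{AMR07} without proof, so your proposal can only be measured against the arguments in those references; measured that way, it has a genuine ordering problem that amounts to circularity. Your starting point is the evolution equation $(\partial_x-\partial_t)A(t,x)=\int_0^t A(s,x)A(t-s,x)\,ds$ together with the pointwise boundary identification $A(0,x)=Q(x)$, which you integrate along characteristics to get the double-integral formula for $E$. But in \cite{IST1} that equation is derived only \emph{after}, and by means of, the decomposition $A=Q+E$ with $E$ continuous: for $Q$ merely in $L^1_{\loc}$ the boundary value $A(0^+,x)=Q(x)$ holds only at Lebesgue points, $A(\cdot,x)$ is a priori only $L^1_{\loc}$ in $t$, and the inner convolution $\int_0^{t-h}A(s,h)A(t-h-s,h)\,ds$ of two $L^1_{\loc}$ functions is defined only for a.e.\ $(t,h)$ and need not be continuous, so neither the characteristic integration nor the claim that ``the continuity of $E$ is immediate'' is available at the point where you invoke them. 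You do flag an obstacle in the passage from the Riccati equation to the integral identity, but you locate it in Laplace inversion and convergence rather than in this regularity bootstrap, which is where the theorem's content actually lives. Simon's proof of \eqref{est:a_Q_estimate_2} instead expands the $m$-function in an iterated-integral (Born-type) series $A=Q+\sum_{n\ge2}A_n$ with $|A_n(t)|\le \frac{t^{n-2}}{(n-2)!}\bigl(\int_0^t|Q|\bigr)^n$ and resums, which is precisely what produces the prefactor $\bigl(\int_0^t|Q|\bigr)^2$ and the factor $t$ in the exponent; the refinement \eqref{est:a_Q_estimate_1} in \cite{AMR07} comes from their boundary-control/wave-equation representation of $A$, not from the Riccati route.

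Two further points. Quantitatively, your claimed iterate bound $\bigl(\int_0^{t+h}|Q|\bigr)^{n+1}/n!$ resums to something like $\bigl(\int|Q|\bigr)^2 e^{\int_0^t|Q|}$, missing the factor $t$ in the exponent of \eqref{est:a_Q_estimate_2}, and it is not established that ``Cauchy--Schwarz plus Stirling'' yields exactly the exponents $2\sqrt2\,\normm{Q}$ and $2e\normm{Q}$ of \eqref{est:a_Q_estimate_1}. Structurally, your route could be salvaged by \emph{defining} $A(t,x)$ as the Picard solution of the characteristic integral equation with data $Q$ on $\{t=0\}$ --- that construction does deliver the continuity, the quadratic smallness, and the $\cC^{m+2}$ bootstrap --- but then the substantive task becomes showing that this object coincides with the $A$-amplitude appearing in the Laplace representation \eqref{id:laplace_treansform_A}, and that identification is exactly the content of \cite{IST1} and \cite{AMR07} that the sketch defers to the references.
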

The previous theorem implies that the difference between $E=A-Q$ is small  close to the origin and that eventually $E(0)=0$. It also provides a recovery of singularities result.  Estimate \eqref{est:a_Q_estimate_2} shows that the error only depends locally on $Q$, while \eqref{est:a_Q_estimate_1} provides a global control of the growth of the exponential factor when $\normm{Q} <\infty$.

 %%%%%%%%%%%%%%%%%%%

\subsection{From the \texorpdfstring{$A$}{A}-amplitude to the Born approximation}
We are now ready to prove our claims on existence and structure of the Born approximation.
\begin{proof}[Proof of $\Cref{mt:existence}(i)$]
We can combine the relation between the eigenvalues of the DtN map and the $M$ function given by \eqref{id:lambda_weyl} with the representation of the $M$ as a Laplace transform given in \eqref{id:laplace_treansform_A} to obtain that
\begin{equation} \label{id:abs_convergent}
\lambda_k[V] = k + \int_0^\infty A(t) e^{-2\left(k+\frac{d-2}{2} \right)t} \,dt, \quad \text{ for all } \; k>k_Q ,
\end{equation}
where $k_Q:= \beta_Q -\frac{d-2}{2}$. Using   the change of variables $r=e^{-t}$, we have
\begin{equation*} 
\lambda_k[V] -k =  \int_0^1 A(-\log r) r^{2k + d-3} \,dr, \quad \text{ for all } \; k>k_Q  , 
\end{equation*}
which can also be written as
\begin{equation} \label{id:abs_convergent_2}
\lambda_k[V] -k =  \frac{1}{|\IS^{d-1}|} \int_{\IB^d} |x|^{2k} \frac{A(-\log |x|)}{|x|^{2}} \, dx \quad \text{ for all } \; k>k_Q  .
\end{equation}

Recall that, {\it formally}, $\ve$ should be a solution of the moment problem \eqref{id:moment_prob_abstract}. The previous expression implies that there exists such a solution  for all $k >k_Q  $, since we can take
\begin{equation}   \label{id:VB_def}
\ve(x) := \frac{A(-\log |x|)}{|x|^2}.
\end{equation}
Notice that this is a actual solution of the problem, since  the fact that \eqref{id:abs_convergent} converges absolutely implies that also \eqref{id:abs_convergent_2} is absolutely convergent. Thus, we finally have that
\begin{equation}   \label{id:existence} 
\lambda_k[V] -k =  \frac{1}{|\IS^{d-1}|} \int_{\IB^d} |x|^{2k} \ve(x) \, dx \quad \text{ for all } \; k>k_Q  .
\end{equation}
Uniqueness is proved in \Cref{sec:appendix_moments} (see in particular identity \eqref{id:moment_vb}), and the identity \eqref{id:thm_1_1} follows from \eqref{id:existence}, since  by \Cref{remark:norms} one always has $\beta_Q \le \beta_V$ and $k_Q \le k_V$ when \eqref{id:q_to_Q} holds.
\end{proof}

\begin{proof}[Proof of \Cref{mt:uniqueness}]
It is  a direct consequence of \Cref{thm:simon_uniqueness} using that 
\begin{equation} \label{id:V_VB_transform}
    V(x)  = |x|^{-2}Q(-\log|x|), \quad \text{and} \quad  \ve(x)  = |x|^{-2}A(-\log|x|). \qedhere
\end{equation}
\end{proof}

\begin{proof}[Proof of \Cref{mt:approximation}]
Using the change of variables $q(r) = r^{-2}Q(-\log r)$, it follows that
\begin{equation*}
    \int_0^t |Q(u)|   \, du = \int_r^1  s|q(s)| \, ds.
\end{equation*}
Therefore, since $F(r) = r^{-2}E(-\log r)$, estimate \eqref{est:a_Q_estimate_1} becomes
% \begin{equation*}  
% \left|F(r) \right| \le \frac{1}{2} \left( \int_r^1 s|q(s)|   \, ds \right)^2\left\{ r^{-2\sqrt{2} \norm{V}_{\cV_d}} + r^{-2e\norm{V}_{\cV_d}}  \right\},
% \end{equation*}
\begin{multline*}
    \left|F(r)\right|
    \le\frac12\left(\int_r^1 s|q(s)|\,ds\right)^2\left\{r^{-2-2\sqrt{2\,\normm{Q}}}+\frac1{\sqrt{2\pi}}\,r^{-2-2e\normm{Q}}\right\} \\\le 
\left(\int_r^1 s|q(s)|\,ds\right)^2r^{-2-\beta_V},
\end{multline*}
since $\beta_Q \le \beta_V$. Also,  \eqref{est:a_Q_estimate_2} becomes
\begin{equation*}  
\left|F(r) \right| \le   \left( \int_r^1 s|q(s)|   \, ds \right)^2 r^{-2- \int_r^1 s |q(s)|   \, ds },
\end{equation*}
which together yield \eqref{est:v_VB_estimate}. That $ F(1) = 0 $ follows directly from these estimates. The fact that, if  $q$ is $\cC^{m}$ in $(b,1]$ with $m\in \N_0$, then $F$ is in $\cC^{m+2}$ in $(b,1]$ it follows from the last statement of \Cref{thm:simon_avdonin_error}. Since $F$ is $\cC^{m+2}$ close to the boundary under these assumptions, the estimates imply that also  $F'(1^{-})=0$.
\end{proof}

We now prove a simple monotonicity property of the Born approximation.
\begin{proposition}[Monotonicity] \label{thm:monotonicity}
Let $d\ge 2$ and $V_1,V_2 \in \cV_d$. Then, 
\[
V_1(x) \le - V_2(x) \; \text{ on } \, U_b \Longrightarrow \; V_1^\mB (x) \le - V_2^\mB(x)  \; \text{ on } \, U_b ,
\]
for any $0<b <1$.
\end{proposition}
\begin{proof}
    It is a direct consequence of \cite[Theorem 10.2]{IST2} together with \eqref{id:V_VB_transform}.
\end{proof}

 %%%%%%%%%%%%%%%%%%%%

\subsection{Some explicit examples.} \  \label{sec:explicitexamples}
We present two examples for which the Born approximation $\ve$ can be computed explicitly. They show in particular that the Born approximation can effectively be more singular at the origin than the potential $V$.

First, let us consider the so-called Bargmann potentials in $\R_+$:
 \begin{equation*}
 Q(t) = -8\mu^2 \frac{\mu -\nu}{\mu +\nu} \frac{e^{-2\mu t}}{\left(1+ \frac{\mu-\nu}{\mu +\nu} e^{-2\mu t} \right)^2},
 \end{equation*}
 where $\mu >0$, and $\nu\ge 0$. Then, in \cite[Section 11]{IST2} it is shown that for $s \geq 0$,
 \begin{equation*}
A(s) =  2(\nu^2-\mu^2)e^{-2\nu s}.
 \end{equation*}

Therefore, using  \eqref{id:V_VB_transform} one gets:
 \begin{equation*}
 V(x) = -8\mu^2 \frac{\mu -\nu}{\mu +\nu} \frac{|x|^{2(\mu-1)}}{\left(1+ \frac{\mu-\nu}{\mu +\nu} |x|^{2\mu} \right)^2},
 \end{equation*}
 and
 \begin{equation*}
\ve (x) =  2(\nu^2-\mu^2)|x|^{2(\nu-1)}.
 \end{equation*}
When $\mu \geq 1$, $V$ is a continuous function on $\IB^d$ whereas the Born approximation $\ve$ has a singularity at the origin if $\nu <1$.

Secondly, let us consider the potential defined in the unit ball $\IB^d$ by
\[
V(x) = \frac{q_0}{|x|^2}, \quad q_0 \in \R.  
\]
As discussed previously (see also \Cref{sec:appendix_lorenz}), $V \in L^{\frac{d}{2}, \infty}(\IB^d)$, (and if $q_0$ is small enough, the DtN  map is well defined, see \Cref{remark:norms_2}). This potential corresponds, by the change of variables \eqref{id:q_to_Q}, to the case $Q(t)=q_0$, $t \in \R_+ $,  which was studied in \cite[Theorem 10.1]{IST2} to conclude the following. 

If $q_0 >0$, the Born approximation is given by 
\begin{equation} \label{id:Bornpotential2}
\ve(x) = - \frac{\sqrt{q_0}}{|x|^2 \log |x|}  J_1 (-2 \sqrt{q_0} \log|x|).
\end{equation}
Using the well-known asymptotics for the Bessel functions at infinity, (see \cite[Eq. (5.11.6)]{Lebedev} we see that: 
\begin{equation} \label{id:Bornpotential2asympt}
\ve(x) = O\left(\frac{1}{|x|^2 |\log|x||^{\frac{3}{2}}}\right) ,\quad  |x| \to 0.
\end{equation}

In particular, we see that the singularity at the origin for the potential $\ve$ is more or less the same as the one for the initial potential $V$.

In the same way, if  $q_0 <0$, we obtain:
\begin{equation} \label{id:Bornpotential3}
\ve(x) = - \frac{\sqrt{-q_0}}{|x|^2 \log |x|}  I_1 (-2 \sqrt{-q_0} \log|x|),
\end{equation}
where $I_1$ is the corresponding modified Bessel function of order one, and we have the following asymptotics (see \cite[Eq. (5.11.10)]{Lebedev}):
\begin{equation} \label{id:Bornpotential3asympt}
\ve(x) = O\left(\frac{1}{|x|^{2+2\sqrt{-q_0} } |\log|x||^{\frac{3}{2}}}\right) , \quad |x| \to 0.
\end{equation}
So, in this case the singularity at $x=0$ for $\ve$ is stronger than the one for $V$.

%%%%%%%%%%%%%%%%%%%%%%%%%%%%%%%%%%%%%%%%%%%%%%%%%%%%%%%%%%%%%%%%%%%%%%%%%%
%%%%%%%%%%%%%%%%%%%%%%%%%%%%%%%%%%%%%%%%%%%%%%%%%%%%%%%%%%%%%%%%%%

\section{Effective reconstruction algorithms}  \label{sec:reconstruction}

The proof of \Cref{thm:simon_uniqueness} in \cite{IST1}  is close to constituting an explicit reconstruction algorithm for the potential $Q$ in terms of the $A$-amplitude. In this section we show that this approach can be adapted to the Calderón problem, and together with formula \eqref{id:regularization}, yields \Cref{remark:reconstruction}, a method to reconstruct  a radial potential $V$ from its Cauchy Data or DtN map.

\subsection{Simon's approach to reconstruction} \label{sec:subsec_reconstruction_simon} \
The idea introduced by Simon in \cite{IST1} is to study the $A$-amplitudes of the translated  potentials $Q_s(t) = Q(t+s)$. Notice that this removes a part of $Q$ from the domain $\R_+$, since we are translating  $Q$ to the left. The key is that, by \Cref{thm:simon_avdonin_error}, one can read the value of $Q_s(0) = Q(s)$ from the corresponding  $A$-amplitudes of $Q_s$ as the potential is translated out of the domain. It also will be essential to use that the $A$-amplitudes of $Q_s$ are related by a certain equation.

Let $Q : \R_+ \to \R$ be a potential satisfying assumption \eqref{id:assumption_A2}.
Let $A$ be the $A$-amplitude associated to $Q$. For every fixed $s \ge 0$, consider the potential
$Q_s(t) = Q(t+s)$, where $t\in \R_+$, and denote by $A(t,s)$ the corresponding $A$-amplitude of $Q_s$. Since $Q$ has just local $L^1$ regularity, and so does $A$, it is not clear if $A(t,s)$ is well defined. The simplest way overcome this difficulty, given a specific realization of $Q$,  is to pick the  realization of $A(t,s)$ such that $A(t,s) -Q_s(t)$ is a continuous function for every fixed $s$. This is enough to properly define $A(t,s)$, as the following lemma shows.
\begin{lemma} \label{lemma:Ats_Qts}
Assume that $Q$ satisfies \eqref{id:assumption_A2}. Then, $ A(t,s) -Q(t+s)$ is a jointly continuous function on $[0,\infty) \times [0,\infty)$, and for all $t,s\in \R_+$ it holds that
\begin{equation*}
    \left| A(t,s) -Q(t+s) \right| \le \alpha(t,s)^2 e^{t \, \alpha(t,s) },
\end{equation*}
where
\begin{equation} \label{e:alpha_def}
     \alpha(t,s) =  \int_{0}^t |Q(y+s)| \, dy  = \int_{s}^{t+s} |Q(y)| \, dy .
\end{equation}
\end{lemma}
The estimate follows  applying \Cref{thm:simon_avdonin_error} to the potential $Q_s(t)$. We postpone momentarily the proof of the continuity statement.

Let $Q\in \cC^1(\R_+)$ satisfying \eqref{id:assumption_A2}. In \cite{IST1} it is shown that 
$A(t,s)$ satisfies the initial value problem
\begin{equation} \label{id:A_pde_simon}
\begin{aligned}
     &\frac{\partial A}{\partial s}(t,s)   = \frac{\partial A}{\partial t}(t,s) + \int_0^t A(w,s) A(t-w,s) \, dw,\qquad (t,s) \in \R_+ \times \R_+, \\
 &  A(t,0) = A(t),   \qquad t\in \R_+,
\end{aligned}
\end{equation}
where $A(t)$ denotes the $A$-amplitude of $Q$.
If   $Q \in \cC^1(\R_+)$ this equation holds in the strong sense, and also in the general case under a suitable weak formulation   (see \Cref{thm:Simon_integral_equation}). Then, it follows from \Cref{thm:simon_avdonin_error} that
\begin{equation} \label{id:lim_A-Q}
    \lim_{t \to 0^+} A(t,s) = Q(s) ,
\end{equation}
where the convergence  holds in $L^1(0,T)$ for all $T>0$. If $Q$ is continuous, then the convergence holds also point-wise, and in general will hold at any point of right Lebesgue continuity of $Q$ (see \cite{IST1}). Therefore \eqref{id:lim_A-Q} together with \eqref{id:A_pde_simon}   give a procedure  to reconstruct the potential $Q$ from its $A$-amplitude, provided that \eqref{id:A_pde_simon} can be uniquely solved under certain assumptions. This will be proved in \Cref{lemma:uniqueness_non-linear_A} below, and
in \Cref{sec:stability} we will analyze the stability of this reconstruction procedure. We start by stating a weak version  of \Cref{id:A_pde_simon}.

\begin{theorem}[Theorem 6.3 of  \cite{IST1}] \label{thm:Simon_integral_equation}
Let $Q$ such that  \eqref{id:assumption_A2} holds. \\ If $K(t,s) = A(t-s,s)$ then
\begin{equation} \label{id:K_eq}
      K(t,s_2) = K(t,s_1) 
+   \int_{s_1}^{s_2} \int_{y_2}^{t} K(y_1 ,y_2) K \left (t-y_1+y_2,y_2\right ) dy_1  dy_2, 
\end{equation}
with $0<s_1<s_2<t<\infty$.
\end{theorem}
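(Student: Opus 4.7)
The plan is to derive \eqref{id:K_eq} from the strong PDE \eqref{id:A_pde_simon} by differentiating along the characteristic $\{t - s = \mathrm{const}\}$ and then integrating in $s$. First I would treat the smooth case $Q \in \mathcal{C}^1(\R^+)$, where \eqref{id:A_pde_simon} holds pointwise. Setting $K(t,s) = A(t-s, s)$, so that $A(u,s) = K(u+s, s)$, the chain rule gives
\[
\partial_s K(t,s) = -(\partial_t A)(t-s,s) + (\partial_s A)(t-s,s).
\]
Substituting \eqref{id:A_pde_simon} cancels the $\partial_t A$ contribution and leaves
\[
\partial_s K(t,s) = \int_0^{t-s} A(w,s)\, A(t-s-w, s)\, dw.
\]
The change of variable $y_1 = w+s$ combined with $A(w,s) = K(w+s,s)$ turns this into
\[
\partial_s K(t,s) = \int_s^t K(y_1, s)\, K(t - y_1 + s, s)\, dy_1.
\]
Integrating in $s$ from $s_1$ to $s_2$ (renaming the dummy variable $y_2$) produces \eqref{id:K_eq} in the smooth case.

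For general $Q$ with $\normm{Q}<\infty$, I would argue by approximation. Pick $Q_n \in \mathcal{C}^1(\R^+)$ (e.g.\ by mollifying $Q$ and truncating) such that $Q_n \to Q$ in $L^1_{\loc}$ with $\sup_n \normm{Q_n} < \infty$. Let $A_n$, $K_n$ denote the corresponding $A$-amplitudes and kernels. Each $K_n$ satisfies \eqref{id:K_eq} by the smooth case above, so it suffices to pass to the limit term by term.

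The main obstacle is justifying the convergence $A_n \to A$ in $L^1_{\loc}$, which is the crux of the argument. I would combine two ingredients. On the one hand, the uniform bound $\sup_n \normm{Q_n} \le C$ together with \Cref{thm:simon_avdonin_error}, specifically estimate \eqref{est:a_Q_estimate_1}, gives a uniform $L^\infty_{\loc}$ bound on $A_n - Q_n$, hence a uniform $L^1_{\loc}$ bound on $A_n$. On the other hand, the Laplace transform characterization \eqref{id:laplace_treansform_A} reduces convergence of $A_n$ to convergence of the Weyl--Titchmarsh functions $M_n(-\kappa^2)$ on the ray $\Re \kappa > \beta$ for a uniform $\beta$; this latter convergence follows from continuous dependence of the $L^2$ solutions $u_z$ constructed in \Cref{lemma:Schrodinger_exist} on the potential (standard Volterra-iteration estimate for the initial value problem, using that $Q_n \to Q$ in $L^1_{\loc}$ and the uniform $\normm{Q_n}$ bound to control growth at infinity). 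Uniqueness of the Laplace transform, together with the uniform local $L^1$ bound, then promotes this to $A_n \to A$ in $L^1_{\loc}(\R^+)$.

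With $A_n \to A$ in $L^1_{\loc}$, the quadratic term on the right of \eqref{id:K_eq} for $K_n$ converges to the analogous term for $K$ on the compact rectangle $s_1 \le y_2 \le s_2$, $y_2 \le y_1 \le t$ (bilinear convergence from $L^1_{\loc}$-convergence of one factor and a uniform $L^\infty_{\loc}$ bound on the continuous part $A_n - Q_n$, handling the $L^1 \cdot L^1$ part by a standard equicontinuity/Vitali-type argument). The left-hand side converges pointwise in $(t,s_1,s_2)$ by the joint continuity statement of \Cref{lemma:Ats_Qts} applied to the sequence, which gives the desired identity for arbitrary $Q$ satisfying \eqref{id:assumption_A2}.
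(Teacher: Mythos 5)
The paper gives no proof of this statement: it is imported verbatim as Theorem~6.3 of \cite{IST1}, so there is nothing internal to compare against. Your proposal is essentially a reconstruction of Simon's own argument (strong PDE for $\cC^1$ potentials, integration along the characteristic $t-s=\mathrm{const}$, then approximation), and the smooth-case computation is correct: the chain rule does cancel the $\partial_t A$ term and the substitution $y_1=w+s$ yields exactly \eqref{id:K_eq} after integrating in $s$. Note, however, that your argument is not self-contained in the sense that it presupposes \eqref{id:A_pde_simon} in the strong sense together with joint $\cC^1$ regularity of $A(t,s)$ for smooth $Q$, which is itself a nontrivial theorem of \cite{IST1}; this is acceptable here since the paper treats that input as known.

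Two points in the approximation step deserve attention. First, your route to $A_n\to A$ in $L^1_{\loc}$ via convergence of the Weyl--Titchmarsh functions is both roundabout and incomplete as stated: convergence of the Laplace transforms $M_n(-\kappa^2)$ on a ray, combined with a uniform $L^1_{\loc}$ bound on $A_n$, only identifies the limit of weakly convergent subsequences; it does not by itself upgrade to $L^1_{\loc}$ norm convergence without an additional equi-integrability argument. The clean tool is the stability estimate \eqref{est:A_difference} (Simon's Theorem~2.1, quoted in the proof of \Cref{lemma:Ats_Qts}), which gives $\sup_{t\le T}|(A_n-Q_n)(t)-(A-Q)(t)|\le C_T\int_0^T|Q_n-Q|$ directly, and, applied to the translated potentials, uniformly in the translation parameter $s$ on compacts; combined with $Q_n\to Q$ in $L^1_{\loc}$ this yields everything you need, including convergence of the quadratic term (the $Q_n\cdot Q_n$ contribution is a convolution, controlled by $\norm{f*g}_{L^1}\le\norm{f}_{L^1}\norm{g}_{L^1}$). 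Second, the left-hand side $K_n(t,s_i)=Q_n(t)+E_n(t-s_i,s_i)$ does \emph{not} converge pointwise in $t$, since $Q_n(t)$ need not; what saves you is that the $Q_n(t)$ terms cancel in the difference $K_n(t,s_2)-K_n(t,s_1)$, which therefore converges locally uniformly, and both sides of the limiting identity are then continuous in $t$. With these repairs the proof is complete.
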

\begin{remark} \label{remark:continuous}
The previous theorem implies that $K$ satisfies an initial value problem with $K(t,0) = A(t,0) =A(t)$, where $A(t)$ is the $A$-amplitude of $Q$. Moreover, by \Cref{lemma:Ats_Qts} we know that $K(t,s)-A(t) = A(t-s,s)-Q(t) - (A(t)-Q(t))$ is  continuous for $0\le s \le t<\infty$.
\end{remark}
The previous conditions are enough to obtain a uniqueness result for the initial value problem for \eqref{id:K_eq}, as the following lemma shows.

\begin{lemma}   \label{lemma:uniqueness_non-linear_A}
Let $a \in \R_+$ and $f \in L^1(0,a)$. There is at most one solution of \eqref{id:K_eq} in $\{0<s_1<s_2<t<a\}$
such that 
\[
 K(t,s) = f(t) + K_0(t,s)  ,
\]
where $ K_0(t,s)$ is a continuous function on $0<s\le t \le a$ and $K_0(t,0) = 0$.
\end{lemma}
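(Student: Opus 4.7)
The plan is to reduce the problem to a standard Volterra--Gronwall argument for the difference of two solutions. Suppose $K_1$ and $K_2$ are two solutions of \eqref{id:K_eq} of the prescribed form, so that their difference $\Delta := K_1 - K_2 = K_0^1 - K_0^2$ is continuous on the compact triangle $T_a := \{0 \le y_2 \le y_1 \le a\}$ and satisfies $\Delta(t,0) = 0$. I first want to rewrite \eqref{id:K_eq} as a Volterra-type equation with initial datum at $s = 0$. Specializing \eqref{id:K_eq} to $0 < s_1 < s$ and letting $s_1 \downarrow 0$, the boundary term $K_i(t, s_1) = f(t) + K_0^i(t, s_1)$ converges to $f(t)$ (since $K_0^i$ is continuous and vanishes at $s = 0$); the convergence of the double integral is legitimate because, after the change of variables $(u,v) = (y_1, t - y_1 + y_2)$, the absolute integrand is dominated by $(|f|(u) + \|K_0^i\|_\infty)(|f|(v) + \|K_0^i\|_\infty)$ on a subset of $[0,a]^2$, which is in $L^1$ since $f \in L^1(0,a)$. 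Each $K_i$ therefore satisfies
\begin{equation*}
K_i(t,s) = f(t) + \int_0^s \int_{y_2}^t K_i(y_1, y_2) \, K_i(t - y_1 + y_2, y_2) \, dy_1 \, dy_2.
\end{equation*}

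Subtracting the two equations and using the identity $ab - cd = a(b - d) + d(a - c)$ with $a = K_1(y_1,y_2)$, $b = K_1(t-y_1+y_2,y_2)$, $c = K_2(y_1,y_2)$, $d = K_2(t-y_1+y_2,y_2)$, the $f(t)$ terms cancel and I obtain
\begin{equation*}
|\Delta(t,s)| \le \int_0^s \int_{y_2}^t \bigl[ |K_1(y_1,y_2)|\,|\Delta(t-y_1+y_2,y_2)| + |K_2(t-y_1+y_2,y_2)|\,|\Delta(y_1,y_2)| \bigr] dy_1 dy_2.
\end{equation*}
The key point is to integrate out the $L^1$-singularity of $f$ in the $y_1$-direction before taking any sup: for each fixed $y_2$,
\begin{equation*}
\int_{y_2}^t |K_i(y_1, y_2)|\,dy_1 \le \|f\|_{L^1(0,a)} + a\,\|K_0^i\|_{L^\infty(T_a)} =: C_i,
\end{equation*}
and the analogous bound for $|K_2(t - y_1 + y_2, y_2)|$ follows via the substitution $u = t - y_1 + y_2$.

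Now introduce the nondecreasing function $h(s) := \sup\{|\Delta(y_1, y_2)| : (y_1, y_2) \in T_a,\; y_2 \le s\}$. By uniform continuity of $\Delta$ on $T_a$ combined with $\Delta(\cdot, 0) \equiv 0$, one has $h(0) = 0$ and $h$ is continuous at $0$. Inside the double integral above, both $|\Delta(y_1,y_2)|$ and $|\Delta(t-y_1+y_2, y_2)|$ are bounded by $h(y_2)$, because the second argument equals $y_2$ in each case. Combining this with the $L^1$ bound on the $K_i$ factors yields
\begin{equation*}
|\Delta(t, s)| \le (C_1 + C_2) \int_0^s h(y_2)\,dy_2,\qquad (s,t)\in T_a,
\end{equation*}
and taking the supremum over the admissible $(t, s)$ in the definition of $h$ produces $h(s) \le (C_1 + C_2) \int_0^s h(y_2)\,dy_2$. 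A standard Gronwall (or direct Volterra iteration giving $h(s) \le \|h\|_\infty \,[(C_1+C_2) s]^n/n!$) forces $h \equiv 0$ on $[0,a]$, hence $K_1 \equiv K_2$.

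The only real subtlety is the low regularity of $f$: since $f$ is merely $L^1$, neither $K_i$ nor the integrand of \eqref{id:K_eq} is bounded, so a naive pointwise Gronwall on $|\Delta|$ does not apply directly. The trick is that $\Delta$ itself \emph{is} continuous and bounded (the singular part $f$ cancels in the difference), so one controls $\Delta$ in sup-norm while treating the $K_i$-factors in $L^1$ along the $y_1$-fiber, reducing everything to a scalar linear Volterra inequality in $s$. Also worth noting is the mild technicality of passing to the limit $s_1 \to 0^+$ in \eqref{id:K_eq}; this is harmless thanks to the change-of-variables estimate above, so the Volterra form of the equation follows rigorously from the hypothesis.
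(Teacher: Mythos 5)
Your proof is correct and follows essentially the same strategy as the paper's: subtract the two solutions, split the bilinear term via $ab-cd=a(b-d)+d(a-c)$, and close a Gronwall/Volterra inequality in $s$, exploiting that the difference $\Delta=K_0^1-K_0^2$ is continuous while the $L^1$ singularity $f$ enters only through the coefficients. The only (harmless) divergence is the choice of functional: the paper runs Gronwall on $g(s)=\int_s^a|\Delta(t,s)|\,dt$ with the constant $D(a)=\sup_s\int_s^a(|K_1|+|K_2|)\,dt$, whereas you run it on the running supremum of $|\Delta|$ and absorb $f$ into the fiberwise $L^1$ bounds $C_i=\|f\|_{L^1}+a\|K_0^i\|_{\infty}$; both work, and your preliminary passage to the $s_1=0$ Volterra form is correctly justified by the $L^1\times L^1$ domination after the change of variables.
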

This lemma is a consequence of the estimates proved by Simon in \cite[Section 7]{IST1}. Since the result is not explicitly stated in \cite{IST1}, we give a proof here for completeness.
\begin{proof} 
Assume that $ K(t,s) = f(t) + K_0(t,s)$ and $ \tilde{K}(t,s) = f(t) + \tilde{K}_0(t,s)$ are two solutions satisfying the conditions of the statement of the lemma. 
Let
\[
g(s) := \int_{s}^{a} \left| K(t,s) -\tilde{K}(t,s) \right| \, dt =\int_{s}^{a} \left| K_0(t,s) -\tilde{K}_0(t,s) \right| \, dt.
\]
By the previous assumptions, $g$ is a continuous function in $[0,a]$. 
Moreover
\begin{multline} \label{est:D_finite}
D := \sup_{0\le s<a}  \int_{s}^{a} \left( \left| K(t,s) \right| + \left| \tilde{K}(t,s) \right|  \right) \, dt 
\\ < \sup_{0\le s<a} \int_{s}^{a} \left( \left| K_0(t,s) \right| + \left| \tilde{K_0}(t,s) \right|  + 2\left|f(t) \right|  \right) \, dt< \infty,
\end{multline}
since the function of $s$ obtained from last integral  is  continuous on $[0,a]$.
Using this in \eqref{id:K_eq} it follows that
\[
g(s_2) \le g(s_1) + D \int_{s_1}^{s_2} g(y) \, dy,
\]
(this follows from simple estimates from \eqref{id:K_eq}: for a completely analogous and more detailed explanation see the proof of \Cref{lemma:gronwall} below).

We now define $h_{z}(s) = \sup_{z \le y  \le s} g(y)$. The previous estimate implies
that 
\[
h_{s_1}(s_2) \le h_{s_1}(s_1) + D h_{s_1}(s_2) \int_{s_1}^{s_2}  \, dy.
\]
Therefore,  if $h_{s_1}(s_1) = g(s_1) = 0$ and $(s_2-s_1) D<1$, then  it follows that $h_{s_1}(s_2) =0$. This shows that if $g(s_1)$ vanishes, then $g(s)$ vanishes in $(s_1,s_1+1/D)$. 

Since $g(0) =0$ one can apply the previous argument a finite number of times to deduce that $g(s) =0$ in $[0,a]$, and therefore that $K = \tilde{K}$ on $\{0<s_1<s_2<t<a\}$.
\end{proof}

We can now prove that $A(t,s)-Q(t+s)$ is a jointly continuous function, as stated previously.

\begin{proof}[Proof of $\Cref{lemma:Ats_Qts}$]
The estimate is a direct application of \eqref{est:a_Q_estimate_2}. 

We now prove that $A(t,s) -Q(t+s)$ is continuous in $t$ and $s$. We define
\[
\alpha(t) = \int_0^t |Q_1(s)| + |Q_2(s)| \, ds.
\]
From \cite[Theorem 2.1]{IST1} for $L^1(\R_+)$ potentials\footnote{Note that \cite[equation (2.4)]{IST1} contains a typographical error, the correct left hand side is the one in \eqref{est:A_difference} instead of just $|A_1(t)-A_2(t)|$.} one gets the estimate
\begin{equation} \label{est:A_difference} 
   \left |A_1(t) -Q_1(t) - \left( A_2(t) -Q_2(t) \right) \right | \le \alpha(t) e^{ t\alpha(t)}  \int_{0}^t |Q_1(s)-Q_2(s)|\, ds .
\end{equation}
The extension of this estimate for potentials satisfying \eqref{id:assumption_A2} is immediate due to the local dependence  of the $A$-amplitude from $Q$ (see \Cref{thm:simon_uniqueness}). 

This shows that $A-Q$ is continuous with respect to $Q$ in $L^1(0,T)$  for all $T>0$.

Let $T>0$ and $s_1,s_2 \in [0,T)$ with $s_1<s_2$. We now apply \eqref{est:A_difference}  with $Q_1(t) = Q(t+s_1)$ and $Q_2(t) = Q(t+s_2)$. With this choice we have $A_1(t) = A(t,s_1)$, $A_2(t) = A(t,s_2)$ and
\begin{multline} \label{est:A_difference_2}
   \sup_{t\in[0,T]} \left |A(t,s_1) -Q(t+s_1) - \left( A(t,s_2) -Q(t+s_2) \right) \right | \\
   \le \alpha(2T) e^{ 2T\alpha(2T)}  \int_{0}^{2T} |Q(s)-Q(s+s_2-s_1)|\, ds ,
\end{multline}
where we have used a change of variable $s = s'+s_1$ in the integral term.
Since translations are continuous in the $L^1$ norm, we have that $\lim_{\varepsilon \to 0^+}\omega(\varepsilon) =0$ where
\[
\omega(\varepsilon) = \int_{0}^{2T} |Q(s)-Q(s+\varepsilon)|\, ds.
\]
From \Cref{thm:simon_avdonin_error} it follows that for any fixed $s \in [0,T)$ the function $A(t,s) -Q(t+s)$ is  continuous in $t$  for  $t\in [0,T]$. Combining this with the estimate
\begin{equation*}
   \sup_{t\in[0,T]} \left |A(t,s_1) -Q(t+s_1) - \left( A(t,s_2) -Q(t+s_2) \right) \right | \le C_T \omega(s_2-s_1),
\end{equation*}
that follows from  \eqref{est:A_difference_2},
we obtain that $A(t,s) -Q(t+s)$ is a jointly continuous function in $[0,T)^2$. Since $T$ is arbitrary, this finishes  the proof of the lemma.
\end{proof}

\subsection{Reconstruction for the radial Calderón problem}\
It is not difficult to adapt the prevous reconstruction method to the radial Calderón problem
using the transformation $V(x) = q(|x|) = |x|^{-2}Q(-\log|x|)$ for the potentials, as we now show.

Let $V(x) = q(|x|)$ and define
 \begin{equation*}
     V_s(x) := s^2V(sx), \quad  q_s(r) := s^2q(sr)\quad  s\in (0,1].
 \end{equation*} 
If $ \ve_s : = \left[ V_s \right]^\mB $  we introduce the $W$ function given by
 \begin{equation} \label{id:W_def}
     W(|x|,s) := \frac{1}{s^2} \ve_s (x), \quad s\in [0,1].
 \end{equation}
It will be convenient to use the notation 
 \begin{equation*} 
\ve(x) = \qe(|x|), \quad  \text{and} \quad \ve_s(x) = \qe_s(|x|),
\end{equation*}
for the radial profiles of the Born approximations. Thus  $W(r,s) = s^{-2} \qe_s (r)$.
 
 In terms of the $A$-amplitude it holds that  
\begin{equation} \label{id:W_to_A}
    A(t,s) = e^{-2(t+s)}W(e^{-t},e^{-s})  , \qquad   W(r,s) = \frac{1}{r^2s^2} A(-\log r,-\log s).
\end{equation}

We restate \Cref{lemma:Ats_Qts} in this context as follows.
\begin{lemma} \label{lemma:Ats_Qts_V}
Assume that $V = q(|\cdot|)$ with $V\in \cV_d$. Then,   for all $r,s\in (0,1]$ it holds that
\begin{equation*}
    \left| F_s(r) \right| \le  \frac{s^{-2}}{r^{
2+ g(r,s)}} g(r,s)^2,
\end{equation*}
where  
\[
F_s(r) := W(r,s) - q(rs), \quad \text{and} \quad g(r,s) := \int_r^1 w s^2|q(sw)|   \, dw  = \int_{rs}^s t |q(t)|   \, dt .
\]
Moreover $F_s(r)$ is a jointly continuous function on $(0,1] \times (0,1]$.
\end{lemma}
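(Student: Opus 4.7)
The plan is to deduce this lemma directly from \Cref{lemma:Ats_Qts} by transporting everything through the change of variables $r=e^{-t}$, $s=e^{-u}$ that converts the radial Calderón setting to the half-line Schrödinger setting. The key observation is that the dilation $V\mapsto V_s$ corresponds, under the Liouville-type transformation \eqref{id:q_to_Q}, exactly to the translation $Q\mapsto Q_u$ with $u=-\log s\ge 0$. Indeed, setting $u=-\log s$, a direct computation gives
\[
e^{-2t} q_s(e^{-t}) = e^{-2t}s^2 q(se^{-t}) = e^{-2(t+u)} q(e^{-(t+u)}) = Q(t+u),
\]
so the potential associated to $V_s$ via \eqref{id:q_to_Q} coincides with the translated potential $Q_u$ appearing in \Cref{lemma:Ats_Qts}. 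In particular its $A$-amplitude is precisely $A(t,u)$ in Simon's notation.

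Combining this identification with \eqref{id:W_to_A}, I would write
\[
F_s(r) = W(r,s)-q(rs) = \frac{1}{r^2 s^2}\bigl(A(-\log r,-\log s)-Q(-\log(rs))\bigr),
\]
since $q(rs)= (rs)^{-2}Q(-\log(rs))$. Now I apply the pointwise estimate of \Cref{lemma:Ats_Qts} with $t=-\log r$, $u=-\log s$, and use the substitution $y=-\log w$ in the definition of $\alpha(t,u)$:
\[
\alpha(-\log r,-\log s) = \int_{u}^{t+u}|Q(y)|\,dy = \int_{rs}^{s} w|q(w)|\,dw = g(r,s),
\]
using $|Q(-\log w)| = w^2|q(w)|$ and $dy=-dw/w$. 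The exponential factor $e^{t\,\alpha(t,u)}$ becomes $r^{-g(r,s)}$, and multiplying by the prefactor $r^{-2}s^{-2}$ yields exactly the bound claimed in the lemma.

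For the joint continuity assertion, I would simply note that $(r,s)\mapsto(-\log r,-\log s)$ is a homeomorphism from $(0,1]\times(0,1]$ onto $[0,\infty)\times[0,\infty)$, and that $(r,s)\mapsto r^{-2}s^{-2}$ is continuous on $(0,1]^2$. Hence the joint continuity of $A(t,u)-Q(t+u)$ on $[0,\infty)^2$ established in \Cref{lemma:Ats_Qts} transfers directly to $F_s(r)$. I do not anticipate any real obstacle: the only point requiring minimal care is the identification $Q^{(s)}=Q_u$, which works precisely because the dilation $V_s(x)=s^2V(sx)$ is engineered so that \eqref{id:q_to_Q} intertwines it with the Simon translation in the half-line variable.
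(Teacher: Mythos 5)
Your proposal is correct and is exactly the argument the paper intends: the paper's proof consists of the single remark that the lemma ``is straightforward using \eqref{id:W_to_A} and \Cref{lemma:Ats_Qts}'', and your computation (identifying the dilation $V\mapsto V_s$ with the translation $Q\mapsto Q_u$, $u=-\log s$, checking $\alpha(-\log r,-\log s)=g(r,s)$, and converting $e^{t\alpha}$ into $r^{-g(r,s)}$) is precisely the omitted verification. The continuity transfer via the homeomorphism $(r,s)\mapsto(-\log r,-\log s)$ is likewise the intended argument.
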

The proof is straightforward using \eqref{id:W_to_A} and  \Cref{lemma:Ats_Qts}. We can now prove the analogue of \eqref{id:lim_A-Q}.
  
\begin{proposition}  \label{prop:lim_W_to_V}
Assume that $V\in \cV_d$ with $V= q(|\cdot|)$ and fix $b$ such that $0<b<1$. Then, if $W$ is given by \eqref{id:W_def} it holds that
\begin{equation*}
    \lim_{r\to 1^-} W(r,\cdot) =  q(\cdot)  , \quad \text{in } L^1(b,1). 
\end{equation*}
\end{proposition}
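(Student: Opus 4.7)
The plan is to reduce the statement to the analogous half-line fact \eqref{id:lim_A-Q}, which asserts that $A(t,\cdot) \to Q(\cdot)$ in $L^1(0,T)$ as $t \to 0^+$, for every $T>0$. The advantage of this approach is that the $L^1$ convergence on the half-line has already been established by Simon in \cite{IST1} and extended to our hypothesis $\normm{Q}<\infty$ in \cite{AMR07}, so the essential analytic work has been done.

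First, I would set $T := -\log b > 0$ and perform the change of variable $s = e^{-s'}$, $ds = -e^{-s'}\,ds'$, in the integral $\int_b^1 |W(r,s)-q(s)|\,ds$. Using the relation \eqref{id:W_to_A} between $W$ and $A$, together with $q(e^{-s'}) = e^{2s'} Q(s')$ (cf.\ \eqref{id:q_to_Q}), a direct computation gives
\begin{equation*}
W(r,e^{-s'}) - q(e^{-s'}) \;=\; e^{2s'}\!\left(\frac{1}{r^2}\,A(-\log r,s') - Q(s')\right),
\end{equation*}
so that
\begin{equation*}
\int_b^1 |W(r,s)-q(s)|\,ds \;=\; \int_0^T e^{s'}\left|\frac{1}{r^2}\,A(-\log r,s') - Q(s')\right|ds'.
\end{equation*}

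Second, I would split the integrand via the triangle inequality to separate the genuine $A$--$Q$ discrepancy from the harmless factor $r^{-2}$ bias, obtaining the bound
\begin{equation*}
\frac{e^T}{r^2}\int_0^T \left|A(-\log r,s') - Q(s')\right|ds' \;+\; e^T\,\frac{|1-r^2|}{r^2}\int_0^T |Q(s')|\,ds'.
\end{equation*}
By \Cref{remark:norms} the assumption $V \in \cV_d$ yields $\normm{Q}<\infty$, which in turn gives $\int_0^T |Q| \le \lceil T\rceil\,\normm{Q} < \infty$; hence the second summand vanishes as $r\to 1^-$. Finally, writing $t = -\log r$ so that $t\to 0^+$ as $r\to 1^-$, the first summand tends to zero by \eqref{id:lim_A-Q}.

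I do not expect a genuine obstacle here: once the change of variables is carried out correctly, the statement is a direct transcription of the half-line result. The only delicate point is keeping track of the weights $e^{s'}$ and $r^{-2}$ after substitution, but these are uniformly bounded on $(0,T)$ and for $r$ close to $1$, so they cause no difficulty. One could alternatively prove the proposition by combining \Cref{lemma:Ats_Qts_V} (which controls $W(r,s)-q(rs)$) with $L^1$-continuity of translations for $q$ on $(b,1)$, but the route via \eqref{id:lim_A-Q} is cleaner and more in line with the spirit of the paper.
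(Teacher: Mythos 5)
Your proposal is correct: the change of variables is carried out accurately, the weight $e^{s'}r^{-2}$ is uniformly controlled, and the two resulting terms are handled by \eqref{id:lim_A-Q} and by $\int_0^T|Q|\le\lceil T\rceil\,\normm{Q}<\infty$, so the reduction to the half-line statement goes through. The paper's own proof is precisely the alternative you mention at the end—it stays in ball coordinates, writes $W(r,s)-q(s)=F_s(r)+(q(sr)-q(s))$ and applies \Cref{lemma:Ats_Qts_V} plus $L^1$-continuity of dilations—so the two arguments rest on identical ingredients and are essentially the same.
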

\begin{proof}
We use that
\begin{align*}
       W(r,s)  &=    q(s) + (W(r,s)-q(sr)) +  (q(sr)-q(s))\\
                &=    q(s) + F_s(r) +  (q(sr)-q(s)),
\end{align*}
where $F_s(r) = W(r,s) - q(rs)$.
From \Cref{lemma:Ats_Qts_V} and a simple bound for $g(r,s)$ it follows that
\[
\left|F_{s}(r) \right| \le \frac{b^{-2}}{r^{h(r)+2}} \left(\int_{rs}^s t |q(t)| \, dt \right)^2, \quad \text{with} \quad h(r)  = \int_{br}^1 t |q(t)|   \, dt.
\]
Hence $\lim_{r \to 1^-} \int_{b}^1 |F_s(r)| \, ds =0$ by dominated convergence. It remains to prove that
\begin{equation*}
    \lim_{r\to 1^{-}} \int_b^1 |q(sr)-q(s)| \, ds =0,
\end{equation*}
which follows from the continuity of dilations in the $L^1$ norm.
\end{proof}

Using the transformation \eqref{id:W_to_A} in \eqref{id:A_pde_simon}, one can   show that $W$ satisfies also a first order PDE with a non-linear integral term:
\begin{equation} \label{id:W_pde}
     r\frac{\partial W}{\partial r} (r,s) -s\frac{\partial W}{\partial s} (r,s)   =  s^2\int_{r}^1 W\left (\frac{r}{\nu},s\right)W(\nu,s) \frac{d\nu}{\nu} ,
\end{equation}
for all $(r,s) \in (0,1) \times (0,1) $. This holds in the classical sense for $\cC^1(\IB^d)$ potentials, since in this case  $A(t,s)$---and hence $W(r,s)$---is a jointly  $\cC^1$ function,  as shown in \cite[Section 2]{IST1}. 

With  the change variables $U(r,s) = W \left( \frac{r}{s},s \right)$ the equation \eqref{id:W_pde}  becomes
\begin{equation*} 
      \frac{\partial U}{\partial s} (r,s)  =   -s\int_{r}^{s} U(\nu,s) U \left (r\frac{s}{\nu},s\right ) \frac{d \nu }{\nu } ,   \qquad 0<r<s<1.
\end{equation*}

In the general case  $V \notin \cC^1(\IB^d)$,  $W(r,s)$ can be shown to satisfy the  integral version of the previous equation.
\begin{proposition} \label{prop:uniqueness_U_sol}
Let $V\in \cV_d$ and let $W$ be given by \eqref{id:W_def}. Define
\begin{equation*}
    U(r,s) := W \left( \frac{r}{s},s\right), \qquad 0<r<s<1  .
\end{equation*}
 Then, we have that
\begin{equation} \label{id:W_pde_2}
     U(r,s_2)   =  U(r,s_1)
+  \int_{s_2}^{s_1} y_2 \int_{r}^{y_2}  U(y_1 ,y_2) U \left (r\frac{y_2}{y_1},y_2\right ) \, \frac{dy_1}{y_1}  \, dy_2,
\end{equation}
for all $0<r<s_2<s_1<1$.

In addition, it holds that $U(r,s)$ is the unique solution of \eqref{id:W_pde_2} in $0<r<s<1 $ such that:
\begin{enumerate}[i)]
    \item  $U_0(r,s) := U(r,s) - U(r,1) $ is a continuous function for $ 0 < r\le s \le 1 $ and $U_0(r,1) =0$.
    \item  $U(r,1) = \qe(r)$ for $0<r<1$.
\end{enumerate}
\end{proposition}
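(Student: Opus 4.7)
The approach is to reduce both assertions to the half-line results of \Cref{sec:subsec_reconstruction_simon} by pushing everything through the change of variables $r=e^{-t}$, $s=e^{-\sigma}$. Combining \eqref{id:W_to_A} with the shift $K(t,\sigma):=A(t-\sigma,\sigma)$ yields
\begin{equation*}
K(t,\sigma) \;=\; e^{-2t}\,W(r/s,s) \;=\; r^{2}\,U(r,s),
\end{equation*}
so that $U(r,s)=r^{-2}\,K(-\log r,-\log s)$, and the $K$-region $\{0<\sigma<t<\infty\}$ corresponds bijectively to the $U$-region $\{0<r<s\le 1\}$.

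To derive \eqref{id:W_pde_2} from \eqref{id:K_eq}, I would substitute $\rho=e^{-y_1}$ and $\tau=e^{-y_2}$ inside the double integral. The factors $K(y_1,y_2)=\rho^{2}U(\rho,\tau)$ and $K(t-y_1+y_2,y_2)=(r\tau/\rho)^{2}\,U(r\tau/\rho,\tau)$, combined with the Jacobian $dy_{1}\,dy_{2}=\tfrac{d\rho}{\rho}\tfrac{d\tau}{\tau}$, reproduce exactly the integrand on the right of \eqref{id:W_pde_2}, while an overall factor $r^{2}$ cancels against the left-hand side after using $K(t,\sigma_j)=r^{2}U(r,s_j)$. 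The reversal of order forced by the exponential converts the constraint $0<s_{1}<s_{2}<t$ of \eqref{id:K_eq} into $0<r<s_{2}<s_{1}<1$, in perfect agreement with \eqref{id:W_pde_2}.

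For the uniqueness statement, I would lift any candidate $\tilde U$ satisfying (i)-(ii) back to the half-line by setting $\tilde K(t,\sigma):=e^{-2t}\tilde U(e^{-t},e^{-\sigma})$ and apply \Cref{lemma:uniqueness_non-linear_A}. Condition (ii) together with \eqref{id:VB_def} identifies $\tilde K(t,0)=A(t)$, which belongs to $L^{1}_{\mathrm{loc}}(\R^{+})$ by \Cref{thm:simon_avdonin_error}; condition (i) becomes the decomposition $\tilde K(t,\sigma)=A(t)+\tilde K_{0}(t,\sigma)$ with $\tilde K_{0}$ continuous on $\{0\le\sigma\le t\}$ and $\tilde K_{0}(t,0)=0$, which is precisely the class in which \Cref{lemma:uniqueness_non-linear_A} applies on any $[0,a]$. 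The lemma then forces $\tilde K=K$ on every $\{0<\sigma\le t\le a\}$, and hence $\tilde U=U$. That the specific choice $U(r,s)=W(r/s,s)$ satisfies (i)-(ii) is immediate: (ii) because $V_{1}=V$ implies $\ve_{1}=\ve$ and so $U(r,1)=W(r,1)=\qe(r)$, and (i) is just \Cref{remark:continuous} transported to the new coordinates.

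The substantive effort is entirely the bookkeeping of the substitution; the nontrivial analytic content is already packaged into \Cref{thm:Simon_integral_equation} and \Cref{lemma:uniqueness_non-linear_A}. The only mild subtlety is matching the continuity region across the transformation (the $K$-side continuity on $\{0\le\sigma\le t<\infty\}$ translates only to $\{0<r\le s\le 1\}$, not up to $r=0$, but this is irrelevant since uniqueness is required on the interior) and verifying that $L^{1}_{\mathrm{loc}}$ regularity of $A$ suffices on each $[0,a]$, both of which are straightforward.
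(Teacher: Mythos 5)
Your proposal is correct and follows essentially the same route as the paper: the integral identity is obtained by transporting \eqref{id:K_eq} through the change of variables \eqref{id:W_to_A}, and uniqueness is reduced to \Cref{lemma:uniqueness_non-linear_A} (the paper routes this through its ball-side restatement, \Cref{lemma:uniqueness_non-linear}, and verifies the continuity of $U_0$ via \Cref{lemma:Ats_Qts_V} rather than \Cref{remark:continuous}, but the content is identical). Your bookkeeping of the substitution, including the relation $K(-\log r,-\log s)=r^{2}U(r,s)$ and the reversal of the integration limits, checks out.
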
 
The equation \eqref{id:W_pde_2} has a strong local behavior even if it contains a non-local term: the value $U(r_0,s_0)$ of a solution   only depends on the values of $U$ in the triangular  region
\[
D_{(r_0,s_0)} = \{ (r,s) \in (0,1)^2: r\le s, \, r \ge r_0, \, s \ge s_0  \}.
\]
To see this notice that taking $s_2=s_0$ and $r = r_0$ in the integral term in \eqref{id:W_pde_2} we have $y_2\ge s_0$, $y_1\ge r_0$ and $r_0\frac{y_2}{y_1} \ge r_0$. This gives the equation a   local behaviour that is in turn reflected   in \Cref{mt:uniqueness} and other results.

The proof of \Cref{prop:uniqueness_U_sol} is based on the  fact that the initial value problem for  \eqref{id:W_pde_2} has at most one  solution that is a continuous perturbation of a free solution, as the next lemma states.
\begin{lemma}   \label{lemma:uniqueness_non-linear}
Let $b\in (0,1)$ and $f \in L^1(b,1)$. There is at most one solution of
\begin{equation*} 
     U(r,s_2)   =  U(r,s_1)
+  \int_{s_2}^{s_1} y_2 \int_{r}^{y_2}  U(y_1 ,y_2) U \left (r\frac{y_2}{y_1},y_2\right ) \, \frac{dy_1}{y_1}  \, dy_2,
\end{equation*}
with   $b<r<s_2<s_1<1$ such that 
\[
 U(r,s) = f(r) + U_0(r,s)  ,
\]
where $ U_0(r,s)$ is a continuous function for $b<r\le  s \le 1$ and $U_0(r,1) = 0$.
\end{lemma}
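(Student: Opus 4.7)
The strategy will mirror the Gronwall-type bootstrap used for \Cref{lemma:uniqueness_non-linear_A}. Given two solutions of the stated form, $U = f + U_0$ and $\tilde U = f + \tilde U_0$, the first step is to linearize around the difference $V := U - \tilde U = U_0 - \tilde U_0$, which is continuous on $\{b \le r \le s \le 1\}$ (reading the hypothesis as providing continuity up to the boundary, exactly as in the proof of \Cref{lemma:uniqueness_non-linear_A}) and vanishes on $s = 1$. Subtracting the two integral identities satisfied by $U$ and $\tilde U$ and applying the bilinear identity $AB - \tilde A \tilde B = (A-\tilde A)B + \tilde A(B - \tilde B)$ to the integrand, then specializing one of the two $s$-endpoints to $1$ where $V$ vanishes, gives
$$V(r,s) = \int_s^1 y_2 \int_r^{y_2} \bigl[ V(y_1,y_2)\, U(ry_2/y_1, y_2) + \tilde U(y_1,y_2)\, V(ry_2/y_1, y_2) \bigr] \frac{dy_1}{y_1}\, dy_2.$$

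The next step is to reduce this to a scalar Gronwall inequality via the functional $g(s) := \int_b^s |V(r,s)|\, dr$, which is continuous on $[b,1]$ with $g(1) = 0$. I will take absolute values in the previous identity, integrate over $r \in (b,s)$, and apply Fubini. The key manipulation is in the inner $r$-integral: for each fixed $(y_1,y_2)$, the change of variables $w = ry_2/y_1$ (with Jacobian $y_1/y_2$) straightens the twisted factor $U(ry_2/y_1,y_2)$ into $U(w,y_2)$, and since $y_1 \le y_2$ the resulting $w$-domain $(by_2/y_1, \min(s,y_1)\,y_2/y_1)$ is contained in $(b,y_2)$; enlarging it to the latter decouples the triple integral into a product of two independent $L^1$ pieces and produces
$$g(s) \le \int_s^1 g(y)\, \bigl[H_U(y) + H_{\tilde U}(y)\bigr]\, dy, \qquad H_U(y) := \int_b^y |U(w,y)|\, dw.$$
The bound $|U| \le |f| + |U_0|$ together with $f \in L^1(b,1)$ and $U_0$ bounded (the latter being a consequence of continuity on the closed region) makes $D := \sup_{y\in[b,1]}(H_U(y)+H_{\tilde U}(y))$ finite, giving $g(s) \le D \int_s^1 g(y)\, dy$ on $[b,1]$.

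The final step is the standard bootstrap from the proof of \Cref{lemma:uniqueness_non-linear_A}: for $h_z(s) := \sup_{s \le y \le z} g(y)$ one has $h_z(s) \le D(z-s)\, h_z(s)$, so $g$ vanishes on any interval of length less than $1/D$ whose right endpoint is a zero of $g$. Iterating from $g(1)=0$ in steps of length $1/(2D)$ covers $[b,1]$ after finitely many steps and yields $V \equiv 0$. The only genuine technical obstacle is the Fubini/change-of-variables computation in the middle step: the bilinear term has its two $U$-factors in a ``twisted partner'' configuration (arguments $y_1$ and $ry_2/y_1$), and it is specifically the substitution $w = ry_2/y_1$ combined with $y_1 \le y_2$ that decouples the integrals into a form amenable to Gronwall. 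The remaining ingredients---continuity of $g$, finiteness of $D$, and the bootstrap itself---are routine once the assumed decomposition $U = f + U_0$ is invoked.
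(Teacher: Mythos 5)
Your argument is correct, but it takes a genuinely different route from the paper's. The paper proves this lemma in one line: the change of variables \eqref{id:W_to_A} (equivalently $U(r,s)=\tfrac{1}{r^2s^2}K(-\log r,-\log s)$) maps the integral equation for $U$ onto the additive-convolution equation \eqref{id:K_eq} for $K$, so the uniqueness is inherited directly from \Cref{lemma:uniqueness_non-linear_A}. You instead rerun the Gr\"onwall/bootstrap argument natively in the $(r,s)$ variables; the substitution $w=ry_2/y_1$ with Jacobian $y_1/y_2$, which cancels exactly against the weight $y_2\,dy_1/y_1$ and lands the $w$-domain inside $(b,y_2)$, is precisely the multiplicative analogue of the $L^1$ convolution bound that the logarithmic change of variables makes automatic in \eqref{id:K_eq}. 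Both computations are isomorphic; your version is self-contained and makes the decoupling mechanism explicit, while the paper's is shorter because the convolution structure has already been linearized. Two shared caveats, neither of which puts you at a disadvantage relative to the paper: (i) the continuity hypothesis on $U_0$ must be read on the closed region $\{b\le r\le s\le 1\}$ for $g$ to be finite and continuous near $r=b$ (the paper's own proof of \Cref{lemma:uniqueness_non-linear_A} makes the analogous closed-region reading, which is what holds in the application via \Cref{lemma:Ats_Qts_V}); and (ii) like \Cref{lemma:uniqueness_non-linear_A}, your argument delivers ``at most one'' solution, with existence supplied separately in \Cref{prop:uniqueness_U_sol} --- this is all the paper's proof delivers as well. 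Also note that specializing the endpoint to $s_1=1$ requires a limiting argument since the equation is posed for $s_1<1$, but this is justified by the continuity of $U_0$ up to $s=1$ and is implicit in the paper's treatment too.
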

\begin{proof}
    Is an immediate consequence of  \Cref{lemma:uniqueness_non-linear_A} and \eqref{id:W_to_A}.
    which implies that 
\[
 U(r,s) = W\left (\frac{r}{s},s\right ) = \frac{1}{r^2s^2} A(-\log(r)+\log s,-\log s) = \frac{1}{r^2s^2} K(-\log r,-\log s). 
\]
\end{proof}

\begin{proof}[Proof of $\Cref{prop:uniqueness_U_sol}$] 
That  \eqref{id:W_pde_2} holds for all $V\in \cV_d$  follows directly from \eqref{id:W_to_A} and \Cref{thm:Simon_integral_equation}. 

The second statement follows from \Cref{lemma:uniqueness_non-linear}, provided that we show that $U_0(r,s) = W \left( \frac{r}{s},s \right) -  W \left( r,1 \right)$ is a continuous function on $\{0<r\le  s \le 1\}$. We have that
\begin{equation}  \label{id:U_0_continuous}
\begin{aligned}
    U_0(sr,s) &=   W \left( r,s \right) -  W \left( rs,1 \right) \\
            &=  F_s(r) - \left( q^\mB(rs) -q(rs) \right ) .
\end{aligned}
\end{equation}
By \Cref{lemma:Ats_Qts_V} we know that $F_s(r)$ is continuous on $(0,1]^2$. On the other hand,   $q^\mB(r') -q(r') = F_1(r')$ is continuous on $(0,1]$, so, taking $r'=rs$, the second term in \eqref{id:U_0_continuous} is also continuous on $(0,1]^2$. Replacing $r$ by $r/s$ in \eqref{id:U_0_continuous} we conclude that $ U_0(r,s)$ is a continuous function on $\{0<r\le  s \le 1\}$.
\end{proof}

We can finally state the algorithm to reconstruct $V\in\cV_d$ form $\ve$.
\begin{algorithm} \label{remark:reconstruction}
Given $\ve$ and $0<b<1$, it is possible to reconstruct $V\in\cV_d$ in the region $b<|x|<1$ with the following three steps:
\begin{enumerate}[1)]
    \item  Using \eqref{id:VB_regularization_intro} and that $\ve = \ve_r$ on $\IB^d\setminus\{0\}$,   reconstruct $\ve$ from  $(\lambda_k[V])_{k\in \N_0}$. 

\item Find the unique solution $U(r,s)$ of \eqref{id:W_pde_2} such that $U_0(r,s) = U(r,s) - q^\mB(r)$ is a continuous function with $U_0(r,1) =0$ and $q^\mB(|x|) = \ve(x)$.

\item  Use that
$
    \lim_{r\to 1^-} U(r|x|,|x|) =  V(x)
$ where the convergence is in $L^1(\{b<|x|<1\})$.
\end{enumerate}
\end{algorithm}
We notice that step 1) might be replaced by any other suitable method to solve the moment problem \eqref{id:thm_1_1}.
 %%%%%%%%%%%%%%%%%%%%%%%%%%%%%%%%%%%%%%%%%%%%%%%%%%%%%%%%%%%%%%%%%%%%%%%%%%%%%%%%%%%%%

\section{Global Hölder stability} \label{sec:stability}

\subsection{Stability of the \texorpdfstring{$A$}{A}-amplitude}

Stability results for inverse spectral problems for Schrödinger operators on the half-line go back to \cite{MarMas70} (see also \cite{Marbook}); there is also a vast literature on similar results in the case of Sturm-Liouville operators on a finite interval, see, for instance, \cite{Alek, HoKiss, SavShka10} and the references therein.

The main result in this section explores this type of stability result when the spectral data are expressed in terms of the $A$-amplitude. Our next result proves that the map $A\longmapsto Q$ is Hölder continuous.
\begin{theorem}  \label{thm:stability_A_function}
 Fix $M>1$,  $a \in (0,\infty]$,  $\varepsilon_0:=\min\left(1,a\right)$, and $1<p\le \infty$.
 Consider two potentials $Q_1$, $Q_2$  satisfying assumption  \eqref{id:assumption_A2} and let  $A_j$ be the $A$ function of $Q_j$.   Assume also that
 \begin{equation} \label{id:def_Q_loc_bound}
\max_{j=1,2}  \left(\norm{Q_j}_{L^1(0,a)} + \norm{Q_j}_{L^p(0,a)}  \right) \le   \frac{1}{2} M .
\end{equation}
and that
\begin{equation} \label{est:A_condition}
  \int_0^a  \left| A_1(t) - A_2(t) \right|e^{-Mt}  \, dt <\varepsilon_0^{(1+p')/p'},
\end{equation}
where $p'$ is its Hölder conjugate exponent of $p$. Then, for $0<a<\infty$ it holds that
 \begin{equation} \label{e:stab_A_loc}
      \int_0^{a}  \left| Q_1(t)-Q_2(t) \right|  \,dt    < e^{aM}\left(e^{2Ma  }  + 4M  \right)
     \left(\int_0^a  \left| A_1(t) - A_2(t) \right|  \, dt\right)^{1/(1+p')}.
\end{equation}
And if $a=\infty$ it holds that
 \begin{equation} \label{is:Asta_global}
      \int_0^{\infty}  \left| Q_1(t)-Q_2(t) \right| e^{-Mt} \,dt    < ( e^{2M} +5M )
     \left(\int_0^\infty  \left| A_1(t) - A_2(t) \right|e^{-Mt} \, dt\right)^{\beta},
\end{equation}
where $\beta =  (1+p'(1+2M))^{-1}$.
 \end{theorem}
\begin{remark} \label{remark:finitness_A}
Under the assumptions in the statement of the theorem, the right hand side of \eqref{is:Asta_global} is always finite, so the estimate is non-trivial. This follows from \Cref{l:Da} below.
\end{remark}

Before proving this theorem, we will need some preliminary results.
For every fixed $s \ge 0$, consider the potential
$Q_s(t) = Q(t+s)$, where $t\in \R_+$, and denote by $A(t,s)$ the $A$-amplitude of $Q_s$. 
 
If $s\ge 0$, denote by $A_1(\cdot,s)$ and $A_2(\cdot,s)$ the corresponding $A$-amplitudes of the translated potentials $Q_1(\cdot +s)$ and $Q_2(\cdot +s)$.
Also, for $a >0$ we define
\begin{equation} \label{id:def_g_a}
    g_a(s) := \int_0^{a-s} \left| A_2(t,s) - A_1(t,s) \right| e^{-M(t+s)} \, dt, \quad s \in [0,a],
\end{equation}
and the constant
\begin{equation} \label{id:def_Da}
    D_M(a) := \sup_{0\le s < a} \int_0^{a-s}  \left[ \left| A_1(t,s)  \right| + \left| A_2(t,s)  \right| \right] e^{-Mt} \, dt.
\end{equation}

\begin{lemma} \label{l:Da}
    The following holds:
    \begin{enumerate}[i)]
        \item   Let $Q \in L^1(\R_+)$. For any constant $M \ge  2  \norm{Q}_{L^1(\R_+)}$, 
        \begin{equation*} 
            \int_0^\infty |A(t)| e^{-Mt} \, dt \le 2\norm{Q}_{L^1(\R_+)}. 
        \end{equation*}
        \item   Let $Q_j$, $j=1,2$ satisfying \eqref{id:assumption_A2} and $0<a \le \infty$. For any constant $M \ge  2\max_{j=1,2}   \norm{Q_j}_{L^1(0,a)}$, it holds that $D_M(a) \le 2M$.
    \end{enumerate}
\end{lemma}
\begin{proof}
    By \eqref{est:a_Q_estimate_2}, (or \Cref{lemma:Ats_Qts} with $s=0$) one has
    \begin{equation*}
    |A(t) | \le \norm{Q}_{L^1(\R_+)}^2 e^{t \norm{Q}_{L^1(\R_+)} } + |Q(t)| ,
    \end{equation*}
    and therefore, since $M-\norm{Q}_{L^1(\R_+)} \ge \norm{Q}_{L^1(\R_+)}$, one obtains
\begin{equation*}
    \int_0^\infty |A(t) | e^{-Mt}\, dt \le \norm{Q}_{L^1(\R_+)}^2 \int_0^\infty  e^{-t\norm{Q}_{L^1(\R_+)}}  \, dt + \norm{Q}_{L^1(\R_+)} \le  2\norm{Q}_{L^1(\R_+)}.
\end{equation*}
    This proves i). Analogously, by \Cref{lemma:Ats_Qts} we have
    \begin{multline*}
        \sup_{0\le s < a} \int_0^{a-s}  | A_j(t,s) | e^{-Mt} \, dt \le \sup_{0\le s < a} \int_0^{a-s}  | Q_j(t+s) | e^{-Mt} \, dt \\
        +\sup_{0\le s < a} \int_0^{a-s}  \alpha_j(t,s)^2  e^{t(\alpha_j(t,s)-M)} \, dt  \le  \frac{1}{2}M +\frac{1}{4}M^2 \int_0^\infty  e^{-Mt/2}  \, dt   \le M,
    \end{multline*}
    since $\alpha_j(t,s) \le \norm{Q_j}_{L^1(0,a)} \le \frac{1}{2}M$.
\end{proof}

\begin{lemma}  \label{lemma:gronwall}  
Let  $Q_1,Q_2$ satisfying \eqref{id:assumption_A2}, and fix $0<a<\infty$. Then $g_a(s) $ is a continuous function on $[0,a]$.

In addition, if $M \ge  2\max_{j=1,2}   \norm{Q_j}_{L^1(0,a)}$ it holds that $ g_a(s) \le g_a(0)   e^{2Ms} $.
% \begin{equation*} 
%     g_a(s) \le g_a(0)   e^{2Ms}  .
% \end{equation*}
\end{lemma}
\begin{proof}
    Define 
    \begin{equation*}
        E_j(t,s) := A_j(t-s,s) e^{-Mt},\qquad 0\le s\le t\le a, \quad j=1,2.
    \end{equation*}  
    Since
    \begin{equation*}
        e^{Mt}\left(E_j(t,s) - A_j(t)e^{-Mt}\right) =A_j(t-s,s)-Q_j(t) - \left( A_j(t)-Q_j(t) \right),
    \end{equation*}
    it follows from \Cref{lemma:Ats_Qts} that $E_j(t,s) - A_j(t)e^{-Mt}$ is a continuous function, and therefore, that $g_a(s)$ is continuous on $[0,a]$.
    
    By a simple change of variables, we have
    \begin{equation}  \label{e:g_K}
       g_a(s) =  \int_s^{a} \left| E_2(t,s) - E_1(t,s) \right|  \, dt , \quad s \in [0,a].
    \end{equation}
    and
    \begin{equation}  \label{e:g_Da}
    D_M(a)= \sup_{0\le s < a} e^{Ms}\int_s^{a}  \left( \left| E_1(t,s)  \right| + \left| E_2(t,s)  \right| \right)   \, dt.
    \end{equation}
    
    On the other hand, $K(t,s)= E_j(t,s)e^{Mt}$  satisfies \eqref{id:K_eq} for $j=1,2$, so if $s_1<s_2<a$ we have
    \begin{equation*} 
          E_j(t,s_2) = E_j(t,s_1) 
    +   \int_{s_1}^{s_2} e^{My_2} \int_{y_2}^{t} E_j(y_1 ,y_2) E_j \left (t-y_1+y_2,y_2\right ) dy_1  dy_2.
    \end{equation*}
    We take the difference of the previous identity for $j=1$ and $j=2$, and integrate  in $t$ in the interval $[s_2,a]$ and use that $s_1<s_2$ to obtain the estimate
    \begin{multline*}
      \int_{s_2}^a \left|E_1(t,s_2)-E_2(t,s_2)\right|  \, dt \le \int_{s_1} ^a \left|E_1(t,s_1) -E_2(t,s_1) \right|   \, dt \\
     +   \int_{s_1}^{s_2} e^{My_2} \int_{s_2}^a \int_{y_2}^{t} \left |\sum_{j=1}^2 (-1)^{j+1} E_j(y_1 ,y_2)E_j \left (t-y_1+y_2,y_2\right ) \right|\, dy_1  dt  dy_2.
    \end{multline*}
    Using \eqref{e:g_K} and adding and subtracting the corresponding crossed term $E_1E_2$ we get
    \begin{equation}  \label{e:g_a_I_est}
      g_a(s_2)    \le g_a(s_1) + \int_{s_1}^{s_2} e^{M s}  \left(I(E_1-E_2,E_1;s) + I(E_2,E_1-E_2;s) \right) \, ds ,
    \end{equation} 
    where
    \begin{equation*}
        I(F,G;s) :=  \int_{s_2}^a \int_{s}^{t} | F(y_1 ,s)|| G (t-y_1+ s,s )|    \, dy_1  dt.
    \end{equation*}
    Changing the order of integration one gets the estimate
    \begin{equation*}
        I(F,G;s) \le       \int_{s}^a | F(t ,s)| \, dt \int_{s}^{a} | G (t, s)|  \, dt.
    \end{equation*}
    Hence, we obtain
    \begin{equation*}
        I(E_1-E_2,E_1;s) + I(E_2,E_1-E_2;s) 
        \le  g_a(s) \int_s^{a}  \left( \left| E_1(t,s)  \right| + \left| E_2(t,s)  \right| \right)   \, dt.
    \end{equation*}
    Inserting this estimate in \eqref{e:g_a_I_est} and using \eqref{e:g_Da} yields
    \begin{equation*} 
        g_a(s_2) \le g_a(s_1) + D_M(a)\int_{s_1}^{s_2} g_a(y) \, dy, \qquad  s_1 <s_2<a.
    \end{equation*}
    Therefore for $s_1 =0$ and $s_2=s$  it reduces to
    \begin{equation*}  
        g_a(s) \le g_a(0) + D_M(a)\int_{0}^{s} g_a(y) \, dy.
    \end{equation*}
    since $g_a$ is continuous, a direct application of   Grönwall's inequality proves that
    \begin{equation*} 
        g_a(s) \le  
          g_a(0)   e^{s D_M(a)  }  ,
    \end{equation*}
    so, the estimate  $g_a(s) \le g_a(0)   e^{2Ms} $ follows by \Cref{l:Da} ii).
\end{proof}

We can now prove the main stability estimates.
 
\begin{proof}[Proof of \Cref{thm:stability_A_function}]
Let $s,t \ge 0$. We start by assuming $0<a<\infty$. We have that
\begin{multline*}
  Q_1(t+s) -Q_2(t+s) =  \\ Q_1(t+s) -A_1(t,s) - 
  \left(Q_2(t+s)   - A_2(t,s )  \right) + \left(A_1(t,s )-A_2(t,s )\right).
\end{multline*}
Thus, for  $0<\varepsilon <\varepsilon_0 = \min(1,a)$  and for all $s \ge 0$:
\begin{multline} \label{est:initial}
    \int_0^\varepsilon \left| Q_1(t+s)-Q_2(t+s) \right| e^{-M(t+s)} \, dt  \le  
    \int_0^{\varepsilon }\left| A_1(t,s)-A_2(t,s) \right|e^{-M(t+s)} \, dt \\
+\int_0^\varepsilon \left| A_1(t,s)-Q_1(t+s) \right| e^{-M(t+s)} \, dt  +\int_0^\varepsilon \left|A_2(t,s)-Q_2(t+s) \right| e^{-M(t+s)} \, dt .
 \end{multline}
We now assume that $0\le s<a-\varepsilon$. By \eqref{id:def_g_a}, the first term on the right  satisfies 
\begin{equation}  \label{est:ga}
     \int_0^{s+\varepsilon-s } \left| A_1(t,s)-A_2(t,s) \right| e^{-M(t+s)} \, dt = g_{s +\varepsilon}(s) \le g_a(s),
\end{equation}
since $s+\varepsilon <s+\varepsilon_0 <a$. If $j=1,2$, applying \Cref{lemma:Ats_Qts} with $Q = Q_j$,   the remaining terms satisfy
\begin{multline} \label{est:A_Q_bound_beta}
  \int_0^\varepsilon \left| A_j(t,s)-Q_j(t+s) \right| e^{-M(t+s)}\, dt    
  \le  \int_0^\varepsilon  \alpha_j(t,s)^2 e^{t \alpha_j(t,s) } e^{-M(t+s)} \, dt \\
   \le  e^{-Ms}\int_0^\varepsilon  \alpha_j(t,s)^2 e^{t (\alpha_j(t,s)-M) }  \, dt
  \le \varepsilon  \alpha_j(\varepsilon,s)^2 e^{-Ms}   ,
\end{multline} 
since $\alpha_j(t,s) \le 2^{-1}\norm{Q_j}_{L^1(0,a)}\le 2^{-1}M$, and $0\le t\le \varepsilon$ implies $\alpha_j(t,s) \le  \alpha_j(\varepsilon,s)$ (Here $\alpha_j$ is given by \eqref{e:alpha_def} with $Q=Q_j$). By assumption \eqref{id:def_Q_loc_bound} and   Hölder inequality, together with the fact that $s+\varepsilon<a$,  we have
\begin{equation*}
    \alpha_j(\varepsilon,s) = \int_{s}^{s+\varepsilon} |Q_j(y)| \, dy \le \varepsilon^{1/p'} \norm{Q_j}_{L^p(0,a)} \le \varepsilon^{1/p'}M,
\end{equation*}
where $1\le p'<\infty$ is the conjugate exponent of $p$.
Then \eqref{est:A_Q_bound_beta}  becomes
\begin{equation} \label{est:ma}
  \int_0^\varepsilon \left| A_j(t,s)-Q_j(t+s) \right| e^{-M(t+s)}\, dt   \le \varepsilon^{1+2/p'} M^2 e^{-Ms}.
\end{equation}
Inserting \eqref{est:ga} and \eqref{est:ma} with $j=1,2$ in \eqref{est:initial} gives 
\begin{equation*}
     \int_0^\varepsilon \left| Q_1(t+s)-Q_2(t+s) \right|e^{-M(t+s)} \, dt  \le
      g_{a}(s)     + 
2  \varepsilon^{1+2/p'} M^2 e^{-Ms}    ,
\end{equation*}
and \Cref{lemma:gronwall} to bound $g_a(s)$ yields
\begin{equation*}
     \int_0^\varepsilon \left| Q_1(t+s)-Q_2(t+s) \right| e^{-M(t+s)} \, dt  \le
      g_a(0)   e^{ 2M s } + 
2  \varepsilon^{1+2/p'} M^2 e^{-Ms}  .
\end{equation*}
We now integrate both sides in the $s$ variable:
\begin{align*}
    \int_0^\varepsilon  \int_0^{a-\varepsilon}  &\left| Q_1(t+s)-Q_2(t+s) \right|e^{-M(t+s)} \, ds \, dt \\
    &\le   \frac{ g_a(0)}{2M}\left( e^{2M(a-\varepsilon)   } -1\right) + 
    2    \varepsilon^{1+2/p'} M (1-e^{-(a-\varepsilon)M}) \\
    & \le  (2M)^{-1}e^{2Ma  } g_a(0)    + 2M \varepsilon^{1+2/p'} ,
\end{align*}
which, changing variables in the first integral and using that $(2M)^{-1}<1$, immediately implies
 \begin{equation} \label{est:stab_final_1}
    \int_0^\varepsilon  \int_t^{a+t-\varepsilon}  \left| Q_1(s)-Q_2(s) \right|e^{-Ms} \,ds \, dt  \le  
  e^{2Ma  } g_a(0)    + 2M \varepsilon^{1+2/p'} .
\end{equation}
We now want to get rid of the dependence in $t$ of the limits of the second integral. To do this, if we use the bound $e^{-Ms} \le 1$, we observe that by Hölder inequality
 \begin{equation*}
    \int_0^\varepsilon  \int_0^t  \left| Q_1(s)-Q_2(s) \right|e^{-Ms}  \,ds \, dt  \le  2\max_{j= 1,2}\norm{Q_j}_{L^p(0,a)}\int_0^\varepsilon   t^{1/p'}  \, dt \le  M\varepsilon^{1+1/p'} ,
\end{equation*}
and, analogously,
 \begin{equation*} 
    \int_0^\varepsilon  \int_{a+t-\varepsilon}^a  \left| Q_1(s)-Q_2(s) \right|e^{-Ms} \,ds \, dt  \le  2\max_{j= 1,2}\norm{Q_j}_{L^p(0,a)}\int_0^\varepsilon  (\varepsilon-t)^{1/p'}  \, dt \le  M\varepsilon^{1+1/p'} .
\end{equation*}
Combining these two observations we obtain that
\begin{align*}
  \int_0^\varepsilon  \int_t^{a+t-\varepsilon}  \left| Q_1(s)-Q_2(s) \right|& e^{-Ms}  \,ds \, dt \\
  &\ge  
    \int_0^\varepsilon  \int_0^{a}  \left| Q_1(s)-Q_2(s) \right|e^{-Ms}  \,ds \, dt -  2M\varepsilon^{1+1/p'} \\
    &= \varepsilon   \int_0^{a}  \left| Q_1(s)-Q_2(s) \right| e^{-Ms}  \, ds  - 2M\varepsilon^{1+1/p'}.
\end{align*}
Inserting this estimate in \eqref{est:stab_final_1} and using that $\varepsilon^{2/p'}<\varepsilon^{1/p'}$ (recall $\varepsilon<1$), gives
\begin{equation}\label{e:main_stab_estimate}
  \int_0^{a}  \left| Q_1(s)-Q_2(s) \right| e^{-Ms} \,ds    < 
          e^{2Ma  } g_a(0)\varepsilon^{-1}     
         + 4M\varepsilon^{1/p'}.
\end{equation}
 We first prove the local estimate. Since $a$ is fixed, we optimize the estimate by choosing
 $\varepsilon  =  g_a(0)^{\frac{p'}{p'+1}}  $, so it follows that
\begin{equation*}
     \int_0^{a}  \left| Q_1(s)-Q_2(s) \right|e^{-Ms}  \,ds    <
  \left(e^{2Ma  }  + 4M  \right) g_a(0)^{\frac{1}{p'+1}} ,
\end{equation*}
which, together with \eqref{id:def_g_a} and that $e^{-aM} \le e^{-Mt} \le 1$ in $[0,a]$ proves the local Hölder estimate. Since $\varepsilon<\min(1,a)=\varepsilon_0$ we require \eqref{est:A_condition} to hold (notice also that \Cref{thm:simon_uniqueness} implies  $g_a(0) \neq 0$ and---and hence $\varepsilon \neq 0$---if $Q_1\neq Q_2$ on $(0,a)$).

It remains to prove the global estimate. We now assume that \eqref{id:def_Q_loc_bound} holds in all $\R_+$:
 \begin{equation} \label{e:norm_global}
\max_{j=1,2}  \left(\norm{Q_j}_{L^1(\R_+)} + \norm{Q_j}_{L^p(\R_+)}  \right) \le    2^{-1}M .
\end{equation}
First, observe that
\begin{equation*}
    \int_0^{\infty}  \left| Q_1(s)-Q_2(s) \right|e^{-Ms}  \,ds \le \int_0^{a}  \left| Q_1(s)-Q_2(s) \right|e^{-Ms}  \,ds +Me^{-Ma}
\end{equation*}
Inserting this in \eqref{e:main_stab_estimate} and using that $g_a(0) \le g_\infty(0)$ yields
\begin{equation*}
  \int_0^\infty  \left| Q_1(s)-Q_2(s) \right| e^{-Ms} \,ds    < 
         e^{2Ma  }  g_\infty(0)\varepsilon^{-1}    
         +4M\varepsilon^{1/p'} +Me^{-Ma}.
\end{equation*}
This estimate holds true for any $1<a<\infty$ and $\varepsilon<1$. In particular, we can choose $a = 1 - \log\varepsilon $. This gives
\begin{equation*}
  \int_0^\infty  \left| Q_1(s)-Q_2(s) \right| e^{-Ms} \,ds    < 
        e^{2M}  g_\infty(0)\varepsilon^{-1 - 2M}  +4M\varepsilon^{1/p'} +M e^{-M} \varepsilon^{M},
\end{equation*}
for all $\varepsilon<1$. 
Finally, we put $\varepsilon = g_\infty(0)^\frac{1}{\la}$ with $\la>0$ and select the optimal value $\la = 1 + 2M + 1/p'$ to obtain
\begin{equation*}
  \int_0^\infty  \left| Q_1(s)-Q_2(s) \right| e^{-Ms} \,ds    < 
         (e^{2M}+4M)   g_\infty(0)^\beta  
          +Me^{-M}g_\infty(0)^{Mp' \beta},
\end{equation*}
where $\beta =(p'\lambda)^{-1} =  (p'(1+2M)+1)^{-1}$. The previous choice of $\varepsilon$ implies we need to assume $g_\infty(0)<1$ since $\varepsilon<1$. Then, using that $g_\infty(0)^{Mp' \beta} <g_\infty(0)^\beta$ since $Mp'>1$, leads to 
\begin{equation*}
  \int_0^\infty  \left| Q_1(s)-Q_2(s) \right| e^{-Ms} \,ds    < 
          ( e^{2M} +5M) g_\infty(0)^\beta ,
\end{equation*}
since $e^{-M}<1$. This finishes the proof of the theorem, since
\begin{equation*}
    g_\infty(0) =\int_0^\infty  \left| A_1(t) - A_2(t) \right|e^{-Mt} \, dt.
\end{equation*}
It is important to notice that, by \Cref{l:Da} i), the estimate obtained is not trivial, since $ g_\infty(0)< \infty$ always if  \eqref{e:norm_global} holds.
\end{proof}

%%%%%%%%%%%%%%%%%%%%%

\subsection{Stability of the Born approximation}

The analysis of the problem on the half line yields a Hölder stability estimate for the Born approximation. The proof of the local stability result follows directly from \Cref{thm:stability_A_function}.

\begin{proof}[Proof of \Cref{mt:stability_local}]
     Define $Q_j$ and $A_j$ for $j=1,2$ by
    \begin{equation} \label{id:V_VB_transform_j}
    V_j(x)  = |x|^{-2}Q_j(-\log|x|), \quad \text{and} \quad  \ve_j(x)  = |x|^{-2}A_j(-\log|x|).
    \end{equation}
    Write $a = -\log b$. Using the change of variables
    \eqref{id:V_VB_transform_j}, we have for all $1 \le p <\infty$
    \begin{equation*}
        \norm{Q_j}_{L^p(0,a)} = \frac{1}{|\IS^{d-1}|^{1/p}}\left(\int_{U_b}|V_j(x)|^p|x|^{2p-d}dx\right)^{1/p}
        \le \frac{\norm{V_j}_{L^p(U_b)}}{|\IS^{d-1}|^{1/p}} \max(b^{2p-d},1)^{1/p}.
    \end{equation*}
    Using this inequality  together with $\norm{Q_j}_{L^1(0,a)} \le  a^{1-\frac{1}{p}} \norm{Q_j}_{L^p(0,a)}$, we get
        \begin{equation*}
        \norm{Q_j}_{L^1(0,a)} +\norm{Q_j}_{L^p(0,a)} \le \norm{V_j}_{L^p(U_b)} \, \beta(d,b,p),
    \end{equation*}
    where $\beta(d,b,p) := \frac{ \max(b^{2p-d},1)^{1/p}}{|\IS^{d-1}|^{1/p}} (1+  (-\log b)^{1-\frac{1}{p}} )$.

    We now assume that $1<p<\infty$ in order to apply \Cref{thm:stability_A_function}. 
    
    First, if \eqref{e:pot_bound_local} holds for $N$, then \eqref{id:def_Q_loc_bound} holds for $M = 2\beta(d,b,p) N$. One can verify that  $M>1$, as required in \Cref{thm:stability_A_function}, using the assumption  $N > |\IS^{d-1}|^{1/p}$ together with the lower bound $\beta(d,b,p)> |\IS^{d-1}|^{-1/p}$. 
    Now, using again  \eqref{id:V_VB_transform} we have
    \begin{multline*}
         \int_0^a  \left| A_1(t) - A_2(t) \right|e^{-Mt}  \, dt  \le   |\IS^{d-1}|^{-1}\int_{U_b} |\ve_1(x)-\ve_2(x)||x|^{M-(d-2)} \, dx  \\
         \le  |\IS^{d-1}|^{-1} \max \left(1,b^{M-(d-2)} \right) \norm{\ve_1-\ve_2}_{L^1(U_b)},
    \end{multline*}
    so if $\delta_{d,p,b} := \max(1,b^{M-(d-2)})^{-1} |\IS^{d-1}| \varepsilon_0^{(1+p')/p'}$ with $1/p' = 1-1/p$, then \eqref{e:pot_bound_local} implies that \eqref{est:A_condition} holds. Thus, the conditions to apply \Cref{thm:stability_A_function} are met, so using the change of variables \eqref{id:V_VB_transform} in \eqref{e:stab_A_loc} we get 
    \begin{equation*}
       \int_{U_b}|V_1(x)-V_2(x)||x|^{M-(d-2)} \, dx \\ <C_{d,N,p,b} \left(\int_{U_b}|\ve_1(x)-\ve_2(x)||x|^{M-(d-2)}\, dx\right)^{\frac{p}{2p-1}}.
    \end{equation*}
    To finish the proof, one removes the weights using the fact that they are bounded below and above in $U_b$. The case $p=\infty$ is proved analogously.
\end{proof}
The proof of \Cref{mt:stability_global} will require two technical results.
\begin{lemma} \label{lemma:byebye_weight}
    Let $d\ge 2$, $1<p<\infty$, $\mu>0$, and $N>0$. There exists a constant $C_{d,p,N,\mu}>0$ such that, if
    \begin{equation}\label{e:N_g_bound}
        \norm{g}_{L^p(\IB^d)} \le N,
    \end{equation}
    then
    \begin{equation*}
        \norm{g}_{L^1(\IB^d)} \le C_{d,p,N,\mu} \left(  \int_{\IB^d} |g(x)| |x|^{\mu}   \, dx \right)^{\alpha} \quad \text{with} \quad \alpha: = \tfrac{d }{\left(d + \mu\frac{p}{p-1} \right)}.
    \end{equation*}
\end{lemma}
\begin{proof}
    Let $0<\rho \le 1$ and  $I:=\int_{\IB^d} |g(x)| |x|^{\mu} \, dx$. Using Hölder's inequality and \eqref{e:N_g_bound} it follows that
    \begin{equation*}
         \norm{g}_{L^1(\IB^d)} \le |\IB^d_\rho|^{1/p'} \norm{g}_{L^p(\IB^d)} + \rho^{-\mu}\int_{U_\rho} |g(x)| |x|^{\mu} \, dx  \le  A N \rho^{\mu_0}  + \rho^{-\mu} I  ,
    \end{equation*}
    where $\mu_0:= d/p'$, $A: = (|\IS^{d-1}|/d)+^{1/p'}$, and $1/p' = 1-1/p$.
    The quantity in the RHS, as a function of $\rho$, has a unique critical point in $(0,\infty)$, namely
    \begin{equation} \label{e:rho_zero}
        \rho_0:= \left(\frac{\mu  I}{ \mu_0 AN}\right)^{\beta},\qquad \beta := \frac{1}{\mu+\mu_0},  
    \end{equation}
    which is a global minimum. Since we have the restriction $\rho\le 1$, we consider two cases.
    First, assume  $\rho_0<1$. Then inserting \eqref{e:rho_zero} in the previous estimate and taking into account that $\beta\mu_0=1-\beta\mu$, one gets
    \begin{equation*}
         \norm{g}_{L^1(\IB^d)}  \le A N \left(\frac{\mu  I}{ \mu_0 AN}\right)^{\beta\mu_0}  +  I  \left(\frac{\mu  I}{ \mu_0 AN}\right)^{-\beta \mu} \le C_{d,p,N,\mu} I^{\alpha},
    \end{equation*}
    where $\alpha=\beta\mu_0=1-\beta\mu =\frac{d }{p'\mu+ d }$.
    
    Otherwise, assume $\rho_0 \ge 1$. On the one hand, \eqref{e:rho_zero} and $\rho_0 \ge 1$ imply $AN < I \frac{\mu}{\mu_0}$. On the other hand, applying Hölder and \eqref{e:N_g_bound} one has $\norm{g}_{L^1(\IB^d)}  \le A N$. Thus one gets
    \begin{equation*}
          \norm{g}_{L^1(\IB^d)}  \le  A N  =  (AN)^{1-\alpha} (AN)^{\alpha} \le (AN)^{1-\alpha}\left(\frac{\mu}{\mu_0}\right)^{\alpha}  I^{\alpha}.
    \end{equation*}
    This finishes the proof of the lemma.
\end{proof}

\begin{lemma} \label{lemma:finitness_V}
    Let $V\in L^p_\rad(\IB^d,\R)$ for some $d/2<p<\infty$. For every $M>0$ such that
    \begin{equation}\label{e:hypV}
         M \ge \beta(d,p)  \norm{V}_{L^p(\IB^d)} , \qquad \beta(d,p) : =  \frac{2}{|\IS^{d-1}|^{1/p}}\frac{p-1}{p-d/2}
    \end{equation}
    one has $\displaystyle{\int_{\IB^d}|\ve(x)| |x|^{M-(d-2)} \,dx \le |\IS^{d-1}|\beta(d,p) \norm{V}_{L^p(\IB^d)}}$.
    % \begin{equation*}
    %     \int_{\IB^d}|\ve(x)| |x|^{M-(d-2)} \,dx<\infty.
    % \end{equation*}
\end{lemma}
\begin{proof}
    Let $Q$ be defined from $V$ via \eqref{id:V_VB_transform}. Then, by Hölder's inequality,
    \begin{equation} \label{e:Q_j_L1}
        \norm{Q}_{L^1(\R_+)} = \frac{1}{|\IS^{d-1}|} \int_{\IB^d}|V(x)||x|^{2-d}dx 
        \le \frac{1}{2} \beta(d,p)\norm{V}_{L^p(\IB^d)},
    \end{equation}
    since
     \[
    \left(\int_{\IB^d}|x|^{p'(2-d)}dx\right)^{1/p'}\leq |\IS^{d-1}|^{1/p'}\frac{p-1}{p-d/2}.
    \]
    Assumption \eqref{e:hypV} and \eqref{e:Q_j_L1} imply that  $\norm{Q}_{L^1(\R_+)}<M/2$. By \Cref{l:Da} and \eqref{id:V_VB_transform}, one has that 
    \begin{multline*}
        \int_{\IB^d} |\ve(x)| |x|^{M+2-d}\,dx = \int_{\IB^d} \frac{|A(-\log|x|)|}{|x|^{d-M}}\,dx
        \\
        =|\IS^{d-1}|  \int_0^\infty |A(t)|e^{-Mt}\,dt <2|\IS^{d-1}| \norm{Q}_{L^1(\R_+)}.
    \end{multline*}
    Using again \eqref{e:Q_j_L1} finishes the proof of the lemma.
\end{proof}
\begin{proof}[Proof of \Cref{mt:stability_global}]
    The proof follows from \Cref{thm:stability_A_function} when $a =\infty$, as we now show.  Define $Q_j$ and $A_j$ for $j=1,2$ by \eqref{id:V_VB_transform_j}.
    We start by observing that 
    \begin{equation*}
        \norm{Q_j}_{L^p(\R_+)} = \frac{1}{|\IS^{d-1}|^{1/p}}\left(\int_{\IB^d}|V_j(x)|^p|x|^{2p-d}dx\right)^{1/p}
        \le \frac{\norm{V_j}_{L^p(\IB^d)}}{|\IS^{d-1}|^{1/p}},
    \end{equation*}
    since $p>d/2$. This estimate and \eqref{e:Q_j_L1} show that \eqref{id:def_Q_loc_bound}  holds for $a = \infty$ whenever \eqref{e:def_pot_bound} is satisfied. 
    % \begin{multline*}
    %     \norm{Q_j}_{L^1(0,a)} \le \norm{Q_j}_{L^1(\R_+)} = \frac{1}{|\IS^{d-1}|} \int_{\IB^d}|V_j(x)||x|^{2-d}dx\\
    %     \le C_{p,d}\frac{1}{|\IS^{d-1}|}\norm{V_j}_{L^p(\IB^d)}.
    % \end{multline*}
    In addition, \eqref{e:def_pot_bound} implies that \eqref{est:A_condition} holds with $a=\infty$ (in this case $\varepsilon_0 =1$). Since $M>d-1$, in particular $M>1$ for $d\ge 2$.
    Thus, the conditions to apply \Cref{thm:stability_A_function} are met, which implies that \eqref{is:Asta_global} holds. 
    This yields, using  \eqref{id:V_VB_transform},
    \begin{equation*}
       \int_{\IB^d}|V_1(x)-V_2(x)||x|^{M-(d-2)} \, dx \\ <C_{d,M,p}\left(\int_{\IB^d}|\ve_1(x)-\ve_2(x)||x|^{M-(d-2)}\, dx\right)^\beta,
    \end{equation*}
    % \begin{multline*}
    %    \int_{\IB^d}|V_1(x)-V_2(x)||x|^{M-(d-2)} \, dx \\ <C_{d,M,p}\left(\int_{\IB^d}|\ve_1(x)-\ve_2(x)||x|^{M-(d-2)}\, dx\right)^\beta,
    % \end{multline*}
    where $C_{d,M,p}=|\IS^{d-1}|^{1-\beta}(e^{2M}+5M)$ and $\beta = (1+\frac{p}{p-1}(1+2M))^{-1}$. We cannot remove the weight on the RHS, since $\ve_j$ might be very singular at the origin, but since $M>d-1>d-2$, we can remove the weight $|x|^{M-(d-2)}$  on the LHS using \Cref{lemma:byebye_weight} together with \eqref{e:def_pot_bound}. This  proves \eqref{e:glob_sta} with 
    \[
     \frac{d}{\left(d+ 3p(p-1)^{-1}M\right)^2} <\alpha<1\,.\qedhere
   \] 
\end{proof}

%%%%%%%%%%%%%%%%%%%%%%%%%%%%%%%%%%%%%%%

\section{Canonical regularization of the Born approximation} \label{sec:regularization}
 
Here we denote by $\cE'(U)$ the space of compactly supported distributions on an open set $U \subseteq \R^d$. For simplicity, in this section we will consider $\cE'(U)\subseteq \cE'(\R^d)$ with the natural embedding provided by the extension by zero outside $U$. The Paley-Wiener theorem ensures that given any $f\in\cE'(\R^d)$, its Fourier transform:
\begin{equation}\label{id:fourier_compact_supp}
    \widehat{f}(\xi):=\langle f, e_{-i\xi}\rangle_{\cE'\times\cC^\infty},\qquad e_{\xi}(x):=e^{\xi\cdot x},\qquad x,\xi\in\R^d, 
\end{equation}
extends to an entire function in $\IC^d$. 

The moments $\sigma_k[f]$ of a distribution $f\in \cE'(\R^d)$  are defined by 
\begin{equation}  \label{id:distribution_moments}
    \sigma_k[f] := |\IS^{d-1}|^{-1} \br{f,m_k}_{\cE'\times\cC^\infty},\quad m_k(x):=|x|^{2k}, \qquad \forall k\in \N_0.
\end{equation}
A distribution $f\in \cE'(\R^d)$ is \textit{radially symmetric} (or just radial) if and only if 
\[
\br{f, \varphi\circ \rho}_{\cD'\times\cC^\infty_c} =  \br{f,   \varphi}_{\cD'\times\cC^\infty_c},  \qquad  \forall \rho\in \SO(d),\quad \forall \varphi\in \cC^\infty_c(\R^d).
\]
% In what follows, given $V\in\cV_d$, we will denote by $\ve_e$ the extension by zero    of $\ve$ to $\R^d$.
% Since \Cref{mt:existence}(i) ensures that $\ve \in L^{1}_{\loc} (\IB^d\setminus \{0\})$,  one automatically has $\ve \in L^{1}_{\loc}(\R^d\setminus \{0\}) \subset \cD'(\R^d \setminus \{0\} )$. 

\Cref{mt:existence}(ii) follows directly from  the following result.
\begin{theorem} \label{mt:regularization}
Let $d\ge 2$ and $V \in \cV_d$. There exists a unique radial distribution $V^\mB_r \in \cE'(\IB^d) + L_\rad^1(\IB^d)$ such that
        \begin{equation} \label{id:moment_regularization_sec7}
            \sigma_k[V^\mB_r] = \lambda_k[V]-k,  \qquad \forall k\in\N_0. 
        \end{equation}
In addition, $V^\mB_r$ is a regularization of $\ve$, namely,
\[
\br{V^\mB_r,\varphi}_{\cE'\times\cC^\infty}=\int_{\IB^d\setminus\{0\} } \ve(x) \varphi(x)  \, dx ,\qquad \forall \varphi\in\cC^\infty_c(\IB^d\setminus\{0\}),
\]
and the Fourier transform of $V^\mB_r$ satisfies the  following identities:
\begin{enumerate}[i)]
    \item  For all $\xi \in \R^d$
        \begin{equation}  \label{id:regularization}
         \widehat{\ve_r}(\xi)
        =  2 \pi^{d/2}  \sum_{k=0}^\infty  \frac{ (-1)^k}{k! \Gamma(k+d/2)}
        \left(\frac{|\xi|}{2}\right)^{2k}   (\lambda_k[V] - k).
        \end{equation}
    \item   If $\Lambda_V$ is well defined then, for every $\xi \in \R^d\setminus \{0\}$, and  for all $\zeta_1,\zeta_2\in \IC^d$ such that $\zeta_1\cdot \zeta_1 = \zeta_2\cdot \zeta_2 =0$  and $\zeta_1+\zeta_2 = -i\xi$ the following holds
        \begin{equation}  \label{id:syl_Uhl}
        \widehat{\ve_r}(\xi) =   (\overline{e_{\zeta_1}},(\Lambda_V -\Lambda_0)e_{\zeta_2})_{L^2(\IS^{d-1})},
    \end{equation}
    where, for $\zeta\in\IC^d$, we have written $e_\zeta(x):=e^{\zeta\cdot x}$.
\end{enumerate}
\end{theorem}

Note that the fact that  $(\lambda_k[V]-k)_{k\in \N_0}$ is the sequence of moments of a radial distribution is a non-trivial information on the structure of radial DtN maps (see \Cref{sec:appendix_moments}). Since $\ve_r$ coincides exactly with $\ve$ outside the origin, formula \eqref{id:regularization} offers an explicit method to reconstruct $\ve_r$ and, therefore $\ve$, from  $(\lambda_k[V]-k)_{k\in \N_0}$.
Identity \eqref{id:syl_Uhl} connects the concept of the Born approximation with the method of Complex Geometrical Optics solutions of the Schrödinger equation of \cite{SU87} and was used in \cite{BCMM22} to introduce the Born approximation in the context of the Calderón problem.

To prove \Cref{mt:regularization} we need to show that the Fourier transform of a radial compactly supported distribution can always be reconstructed from the moments \eqref{id:distribution_moments} by an explicit formula. The following extends \cite[Identity~(1.20)]{BCMM22} to  distributions.
\begin{lemma} \label{lemma:fourier_transform_identity}
    Let $f \in \cE'(\R^d)$ be radially symmetric. Then
    \begin{equation}\label{id:ft}
        \widehat{f}(\xi)
        =  2 \pi^{d/2}  \sum_{k=0}^\infty  \frac{ (-1)^k}{k! \Gamma(k+d/2)}
        \left(\frac{|\xi|}{2}\right)^{2k}   \sigma_k[f] .
    \end{equation}
\end{lemma}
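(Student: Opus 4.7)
The plan is to start from the distributional definition of the Fourier transform of $f\in\cE'(\R^d)$ given in \eqref{id:fourier_compact_supp}, expand the exponential $e^{-i\xi\cdot x}$ in its Taylor series in $x$ (which, after multiplication by a cutoff $\chi\in\cC^\infty_c(\R^d)$ equal to $1$ on a neighborhood of $\supp f$, converges to $\chi(x)e^{-i\xi\cdot x}$ in every $\cC^k$ norm on $\supp\chi$, uniformly for $\xi$ in compacts), and then use the radial symmetry of $f$ to collapse each monomial pairing to a multiple of $\sigma_k[f]$. More precisely, writing $\widehat f(\xi)=\langle f,\chi\, e_{-i\xi}\rangle$ and using the continuity of $f$ on $\cC^\infty(\R^d)$, one obtains the absolutely convergent expansion
\[
\widehat f(\xi)=\sum_{n=0}^\infty \frac{(-i)^n}{n!}\,\br{f,(\xi\cdot\,\cdot\,)^n}_{\cE'\times\cC^\infty}.
\]

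The key step is then to evaluate $\br{f,(\xi\cdot x)^n}$ exploiting the radiality of $f$. Since $f$ is invariant under pullback by every $\rho\in\SO(d)$, the same integral does not change if we replace the test function by its rotational average. Thus
\[
\br{f,(\xi\cdot x)^n}_{\cE'\times\cC^\infty}=\Bigl\langle f,\,x\mapsto \frac{1}{|\IS^{d-1}|}\int_{\IS^{d-1}}(\xi\cdot |x|\omega)^n\,d\omega\Bigr\rangle.
\]
The angular integral vanishes for odd $n$ by the symmetry $\omega\mapsto-\omega$, while for $n=2k$ the classical formula
\[
\int_{\IS^{d-1}}(\xi\cdot\omega)^{2k}\,d\omega=\frac{2\pi^{d/2}\,(2k)!}{4^k\,k!\,\Gamma(k+d/2)}\,|\xi|^{2k}
\]
gives, after using $|\IS^{d-1}|=2\pi^{d/2}/\Gamma(d/2)$, the rotational average
\[
\frac{1}{|\IS^{d-1}|}\int_{\IS^{d-1}}(\xi\cdot |x|\omega)^{2k}\,d\omega
=\frac{\Gamma(d/2)\,(2k)!}{4^k\,k!\,\Gamma(k+d/2)}\,|x|^{2k}\,|\xi|^{2k}.
\]
Feeding this back in, one gets
\[
\br{f,(\xi\cdot x)^{2k}}_{\cE'\times\cC^\infty}
=\frac{\Gamma(d/2)(2k)!}{4^k k!\,\Gamma(k+d/2)}\,|\xi|^{2k}\,\br{f,|x|^{2k}}_{\cE'\times\cC^\infty}
=\frac{2\pi^{d/2}(2k)!}{4^k k!\,\Gamma(k+d/2)}\,|\xi|^{2k}\,\sigma_k[f],
\]
by the definition \eqref{id:distribution_moments} of $\sigma_k[f]$.

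Inserting this into the Taylor expansion, the surviving even terms yield
\[
\widehat f(\xi)=\sum_{k=0}^\infty \frac{(-1)^k}{(2k)!}\cdot\frac{2\pi^{d/2}(2k)!}{4^k k!\,\Gamma(k+d/2)}\,|\xi|^{2k}\,\sigma_k[f]
=2\pi^{d/2}\sum_{k=0}^\infty \frac{(-1)^k}{k!\,\Gamma(k+d/2)}\Bigl(\tfrac{|\xi|}{2}\Bigr)^{2k}\sigma_k[f],
\]
which is exactly \eqref{id:ft}. The only delicate point is the interchange of the distributional pairing with the Taylor series, and this is settled cleanly via the cutoff argument sketched above, because the remainder of the Taylor series is controlled uniformly together with all its derivatives on $\supp\chi$. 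Everything else reduces to the angular integration formula and an application of the duplication identity for $\Gamma$ hidden in $(2k)!/(4^k k!)$; I expect no other genuine obstacle.
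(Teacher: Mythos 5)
Your proof is correct, but it takes a genuinely different route from the paper's. The paper argues by universality: Paley--Wiener plus radial symmetry give that $\widehat f$ is an entire radial function, hence $\widehat f(\xi)=\sum_k a_k|\xi|^{2k}$; the coefficients satisfy $a_k=b_k(-\Delta)^k\widehat f(0)=b_k\br{f,m_k}$ with constants $b_k$ independent of $f$, and the numerical values of the $b_k$ are then read off from the already-established case of compactly supported $L^1$ functions in \cite[p.~19]{BCMM22}. You instead compute everything from scratch: Taylor-expand $e_{-i\xi}$ inside the pairing (the interchange is legitimate exactly as you say, since the partial sums $S_N$ satisfy $\partial^\alpha S_N=(-i\xi)^\alpha S_{N-|\alpha|}$ and hence converge in the $\cC^\infty$ topology on any compact set, which is all that continuity of $f\in\cE'(\R^d)$ requires), replace each test monomial by its rotational average using the invariance of $f$, kill the odd terms by the antipodal symmetry, and evaluate the even angular integrals via $\int_{\IS^{d-1}}(\xi\cdot\omega)^{2k}d\omega=2\pi^{d/2}(2k)!\,|\xi|^{2k}/(4^kk!\,\Gamma(k+d/2))$, which is the duplication formula in disguise; I checked the constants and they land exactly on \eqref{id:ft}. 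What your approach buys is a self-contained proof that does not lean on the external $L^1$ computation and that sidesteps the (small but real) argument needed in the paper to justify that a radial entire function is a power series in $|\xi|^2$; what the paper's approach buys is brevity, since it never has to evaluate a spherical integral. The only point worth writing out more carefully in your version is the commutation of the $\cE'$--$\cC^\infty$ pairing with the average over $\SO(d)$, which is a standard vector-valued integral argument using the continuity of $\rho\mapsto\varphi\circ\rho$ into $\cC^\infty(\R^d)$.
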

\begin{proof}
If $f \in \cE'(\R^d)$  then the Paley-Wiener theorem ensures that $\widehat {f}$ is an entire function on $\R^d$ (see, for example, \cite[Theorem 7.3.1]{HormanderI}). 
Moreover $f$ is radial if and only if $\widehat {f}$ is   radial.  Therefore it must hold that
\begin{equation*}
    \widehat{f}(\xi) = \sum_{k=0}^\infty  a_k  |\xi|^{2k}, 
\end{equation*}
for some appropriate coefficients $ a_k \in \IC$.

On the other hand,  $a_k  =  b_k (-\Delta)^k \widehat{f}(0)$ for all $k\in\N_0$, where $(b_k)_{k\in\N_0}$ are  some coefficients independent  of $f$---notice that $b_k$  is essentially a coefficient of the Taylor expansion of the radial profile function of $\widehat{f}$. Using  \eqref{id:fourier_compact_supp} one can show
that
\[
(-\Delta)^k \widehat{f}(0) =  \br{f,m_k}_{\cE'\times\cC^\infty} = |\IS^{d-1}|\sigma_k[f].
\]
Hence we conclude that
\begin{equation} \label{id:b_k_sigma}
    \widehat{f}(\xi) = \sum_{k=0}^\infty b_k  |\IS^{d-1}| |\xi|^{2k}  \sigma_k [f] ,
\end{equation}
where the $(b_k)_{k\in\N_0}$ coefficients are  independent of $f\in\cE'(\R^d)$. On the other hand, formula \eqref{id:ft} is proved in \cite[p.~19]{BCMM22} for compactly supported $f\in L^1(\R^d)$. Using this, and the fact that $(b_k)_{k\in\N_0}$ are universal, concludes the proof of the lemma.
\end{proof}

In what follows, given $V\in\cV_d$, we will denote by $\ve_e$ the extension by zero    of $\ve$ to $\R^d$.
Since \Cref{mt:existence}(i) ensures that $\ve \in L^{1}_{\loc} (\IB^d\setminus \{0\})$,  one automatically has $\ve_e \in L^{1}_{\loc}(\R^d\setminus \{0\})$. 
\begin{proof}[Proof of $\Cref{mt:regularization}$]  
By \Cref{mt:approximation} we know that   $\ve_e \in L^1_{\loc}(\R^d \setminus \{0\}) $ and that at $x=0$ it has, at worst, a singularity of order $|x|^{-\alpha}$ for some $\alpha>0$ that depends on $V$. Then, by \cite[Proposition 1 p. 11]{GelShilVol1}  there is always an extension $F\in \cD'(\R^d)$ of $\ve_e$ such that 
    \begin{equation*}
        \br{F,\varphi}_{\cD'\times\cC^\infty_c} =   \int_{\R^d\setminus\{0\} } \ve_e(x) \varphi(x)  \, dx \qquad \forall \varphi\in \cC^\infty_c(\R^d\setminus\{0\}).
    \end{equation*}
    Such an extension is called a regularization of the singular function $\ve_e$. In particular, we have that $F\in \cE'(\R^d)$ since it coincides with  $\ve_e$ outside the origin, and hence vanishes outside $\ol{\IB^d}$. Also, since $\ve_e$ is radial, one can always choose $F$ to be radial. Notice that two different regularizations of $\ve_e$ differ in a distribution supported at $x=0$, or in other words, in a finite linear combination of derivatives of the Dirac delta distribution $\delta_0$ which is supported at $x=0$.

    We now claim that, since there exists an $N\in \N_0$ large enough such that $m_N\ve_e \in L^1(\R^d)$,  for every regularization $F$ of $\ve_e$ one can always find an $N' \in \N_0$ such that
      \begin{equation*}
        \br{m_k  F ,\varphi}_{\cE'\times\cC^\infty} =  \br{m_k\ve_e,\varphi}_{\cE'\times\cC^\infty} \quad \text{ for all } \varphi\in \cC^\infty_c(\R^d), \text{ and for all } k\ge N'.
    \end{equation*}
    In other words, $m_kF  = m_k\ve_e$ as distributions in $\cE'(\R^d)$, for all $k\ge N'$.

    As an immediate consequence of this, we obtain that
    \begin{equation} \label{id:moment_regularized}
        \sigma_k[F] = \sigma_k[\ve_e] = \lambda_k[V]-k \quad \text{for all }  k \ge N'.
    \end{equation}

    To prove the claim, start by observing that $m_N F$ and  $m_N\ve_e$ are both compactly supported distributions that are identical outside $x=0$, so they differ only in a  finite linear combination of derivatives of $\delta_0$. Let $M$ be the maximum order of the derivatives of $\delta_0$.
    Therefore, for any  $N'\ge N$ large enough
    \begin{align*}
        m_{N'}F - m_{N'}\ve_e
        &= m_{N'-N}(m_{N}F - m_{N}\ve_e) \\
        &= m_{N'-N} \sum_{|\alpha| \le M} c_\alpha \partial^\alpha_x \delta_0 = 0,
    \end{align*}
where the right hand side will vanish provided  $N'-N > M$.

Now, define formally $V^\mB_r$ by
\begin{equation*}
     \widehat{\ve_r}(\xi)
        :=  2 \pi^{d/2}  \sum_{k=0}^\infty  \frac{ (-1)^k}{k! \Gamma(k+d/2)}
        \left(\frac{|\xi|}{2}\right)^{2k}   (\lambda_k[V] - k) .
\end{equation*}
An immediate consequence of \Cref{lemma:fourier_transform_identity} and \eqref{id:moment_regularized} is that
\begin{equation*}
\widehat{F}(\xi) =  \widehat{\ve_r}(\xi) + P(|\xi|^2),
\end{equation*}
where $P$ is a polynomial of  order $N'$ at most. This proves that $\widehat{\ve_r} \in \cS'(\R^d) $  where  $\cS'(\R^d)$ denotes the set of tempered distributions, \textit{i.e.} the dual of the Schwartz class. 
Since the Fourier transform is an isomorphism on $\cS'(\R^d)$, it follows that $V^\mB_r \in \cS'(\R^d)$ is well defined. 
Moreover, since the inverse Fourier transform of $P(|\xi|^2)$ is a linear combination of derivatives of $\delta_0$, we actually have that $V^\mB_r \in \cE'(\R^d)$ with support $\ol{\IB^d}$, and that $V^\mB_r = \ve$ in the distribution sense on $\IB^d\setminus\{0\}$, which proves that $V_r^\mB$ is a regularization of $\ve$ in $\IB^d$. 

This implies that $V_r^\mB\in \cE'(\IB^d)+L^1_\rad(\IB^d)$, since $V_r^\mB$ coincides with an integrable function in a neighborhood of the boundary (here we abuse notation by identifying $\ve_r \in \cE'(\R^d)$ with its restriction to $\IB^d$, which is well defined since $\ve_r$ is integrable close to the boundary).  

From the formula defining $\widehat{V_r^\mB}(\xi)$ we also get that $ V_r^\mB$ is a radial distribution. In addition, \eqref{id:moment_regularization_sec7} follows from the definition of $\ve_r$ \Cref{lemma:fourier_transform_identity} and that $\ve_r$ is uniquely determined by this condition.
The fact that  $V_r^\mB$ satisfies the identity (ii) of the statement is a consequence of \cite[Theorem 1]{BCMM22}.
\end{proof}

%%%%%%%%%%%%%%%%%%%%%%%%%%%%%%%%%%%%%%%%%%%%%%%%%%%%%%%%%%%%%%%%%%%%%%%%%%%%%%%%%%%%%%%%%%%%%%%%

\section{A partial characterization of DtN operators} \label{sec:appendix_moments}  

In this section, we show that the existence of the Born approximation has strong consequences for the structure of the spectrum of a radial DtN map. 

Let $V \in \cV_d$, and denote, as usual, by $(\lambda_k[V])_{k\in\N_0}$ the eigenvalues of the DtN map $\Lambda_V$, as defined in \Cref{sec:subsec_DtNmap}. We set:
\begin{equation*}
    \upsilon_n[V]:= \lambda_{n}[V]- n,  \qquad n\in\N_0, 
\end{equation*}
and write, for any sequence of complex numbers $(\mu_n)_{n\in\N_0}$, 
\begin{equation*}
    \Delta^k \mu_n := \sum_{m=0}^k (-1)^m \binom{k}{m} \mu_{n+k-m}  ,\qquad k \in\N_0.
\end{equation*}
\begin{theorem}\label{thm:characterization}
    Suppose $V \in L^p_\rad (\IB^d;\R)$ with  $p > d/2$. Then the eigenvalues of $\Lambda_V$ satisfy, for a certain $k_0\geq 0$ that may depend only on $d$, $p$, and the $L^p$ norm of $V$:
    \begin{equation}\label{characterizationLp}
    \sup_{k\geq 0}\,(k+1)^{p-1}\sum_{m=0}^k \left|\binom{k}{m} \Delta^{k-m} \upsilon_{m+k_0}[V]\right|^p <\infty.
\end{equation}
\end{theorem}
\begin{proof}
    We recall that the Born approximation of $V\in\cV_d$ satisfies $\ve (x) \in L^1 (\IB^d, |x|^{2k_0} dx)$ with $k_0 = \lfloor k_V\rfloor +1>\lfloor k_Q\rfloor +1$ and that is given by 
    $\ve (x)  = |x|^{-2}A(-\log |x|)$ (see (\ref{id:VB_def})).

    Let $q^\mB (r):=r^{-2}A(-\log r)$. It follows that for $k \geq k_0$,
    \begin{equation*}
    \sigma_k [\ve] = \int_0^1 q^\mB (r) r^{2k+d-1} \ dr = \int_0^1 \left( \frac{1}{2}  q^\mB (\sqrt t) \ t^{\frac{d}{2} -1} \right) \ t^k \ dt. 
    \end{equation*}
    Clearly, 
    \begin{equation} \label{e:Qb}
    Q^\mB(t) := \frac{1}{2}  q^\mB (\sqrt t) \ t^{\frac{d}{2} -1 +k_0},
    \end{equation}
    verifies $Q^\mB\in L^1 ((0,1))$. 
    For $f \in L^1 ((0,1))$, we define the Hausdorff moments by 
    \begin{equation*}
    \mu_k[f] = \int_{0}^1 t^{k} f(t) \ dt , \qquad \forall k\in\N_0.
    \end{equation*}
    so that one has, by \Cref{mt:existence}, 
    \begin{equation}\label{id:moment_vb}
    \mu_k [Q^\mB]=\sigma_{k+k_0}[\ve]=\upsilon_{k+k_0}[V],\qquad \forall k\in\N_0.
    \end{equation}

    Since the classical Hausdorff moment problem possesses a unique solution in $L^1((0,1))$ (see \cite[Chapter III]{widd41}), it follows that $Q^\mB$ is the {\it{unique}} function in $ L^1 ((0,1))$ such that 
    $\mu_n [Q^\mB] = \upsilon_{n+k_0}[V]$ for all $n\in\N_0$. Sequences of moments of functions in $L^1((0,1))$ have been characterized by Hausdorff; however, the implications on the spectrum of the DtN map on a potential in $V \in \cV_d$ are not particularly simple to state (see \Cref{remark:charVd}).  

    Suppose now $V \in L^p_\rad (\IB^d;\R)$ with  $p > d/2$. For such values of $p$, the function $F(r)$ introduced in \Cref{mt:approximation} satisfies $q^\mB(r)= q(r)+F(r)$ and the following estimate:
    \begin{equation*}
    | F(r)| \leq \frac {C}{r^{\beta+2}},\qquad 0<r\leq 1,
    \end{equation*}
    where $C, \beta$ are suitable constants that depend only on $d$, $p$, and $\norm{V}_{L^p(\IB^d)} $.
% \begin{equation*}
%      \alpha := \min \left( \beta_V, \, \left(  \frac{1}{1+p' (1+ \frac{1-d}{p})}  \right)^\frac{1}{p'}   \,  \frac{||V||_{L^p(\IB^d)}}{|\IS^{d-1} |^{\frac{1}{p}}} \right),
% \end{equation*}
    Thus $q^\mB(r)$ satisfies
    \begin{equation*}
        \int_0^1 | q^\mB (r) |^p \ r^{d-1+2k_0} \ dr < \infty,
    \end{equation*}
    where now we take $k_0 = \max\left(\big\lfloor \frac{p(\beta+2)-d}{2} \big\rfloor +1,\lfloor k_V\rfloor +1\right)$.
    It follows that $ Q^\mB$ defined in \eqref{e:Qb} and satisfying \eqref{id:moment_vb} is a function in $ L^p ((0,1))$. 
    Now, using \cite[Theorem~5, p. 110]{widd41}, we see that $(\mu_n[Q^\mB])_{n\in \N_0}$ are  the Hausdorff moments of $Q^\mB \in L^p ((0,1))$ if and only if  
    \begin{equation*}
    \sup_{k\in\N_0}(k+1)^{p-1}\sum_{m=0}^k | \nu_{k,m}[Q^\mB]|^p <\infty,
    \end{equation*}
    where, following \cite[p. 101]{widd41}, we write 
    \begin{equation*}
    \nu_{k,m}[Q^\mB] := \binom{k}{m} (-1)^{k-m} \Delta^{k-m} \mu_m[Q^\mB]  ,\qquad k \geq m \geq 0.
    \end{equation*} 
    To conclude, note that, since the difference operators $\Delta^k$ commute with the right shift, one has the relations:
    \begin{equation*}
        \nu_{k,m}[Q^\mB]=\binom{k}{m}(-1)^{k-m}\Delta^{k-m} \upsilon_{m+k_0}[V], \qquad k \geq m \geq 0. \qedhere
    \end{equation*}
\end{proof}    
\begin{remark}\label{remark:charVd}
    The proof of \Cref{thm:characterization} also gives information on the eigenvalues when one merely has $V \in \cV_d$. For $k \geq 1$, define 
    \begin{equation*}
    L_k (t) := (k+1) \nu_{k, \lfloor kt \rfloor}[Q^\mB] ,\  \ t \in [0,1]. 
    \end{equation*}
    It is shown in \cite[p. 112]{widd41} that $(\mu_n)_{n\in \N_0}$ are  the Hausdorff moments of $Q^\mB \in L^1 ((0,1))$ if and only if the sequence $(L_n (t))_{n\in\N}$ converges in $L^1 ((0,1))$. This is not a total characterization of the DtN operators, see \Cref{r:last}, and  this result is not easily interpreted, independently of the functions $L_n (t)$, in terms of the sequence $(\upsilon_n[V])_{n\in\N_0}$.
\end{remark}

\begin{remark} \label{r:last}
Condition (\ref{characterizationLp}) can be viewed as a partial characterization of DtN operators for radial potentials $V \in L^p_\rad(\IB^d;\R)$, $p > d/2$. Nonetheless, a total characterization should involve additional conditions. This is due to the fact that not every locally integrable function is the $A$-amplitude of a Schrödinger operator on the half-line, as has been shown by Remling \cite{Remling03}.
\end{remark}

%%%%%%%%%%%%%%%%%%%%%%%%%

\appendix
\section{Solutions by separation of variables} 
\label{sec:appendix_lemma}

\begin{lemma} \label{lemma:radial_solutions_appendix}
   Let $d\ge 2$ and $V \in L^p(\IB^d)$ with $p>1$ and $p\ge d/2$ be a radial function. Then, for every $k> k_V$  there is a unique solution $b_k$  of \eqref{id:radial_b} with $b_k(1) = 1$ such that the function $u_k(x) = b_k(|x|) Y_k(x/|x|)$  is a proper weak  $ H^1(\IB^d)$ solution of \eqref{id:calderon_direct_Yk}. Moreover, for any  other solution $u \in H^1(\IB^d)$ of \eqref{id:calderon_direct_Yk}   it holds that
\begin{equation*}
    \partial_r b_k (1) = (Y_k, \partial_\nu u|_{\IS^{d-1}} )_{L^2(\IS^{d-1})},
\end{equation*}
for all $Y_k \in \gH_k$ with $\norm{Y_k}_{L^2(\IS^{d-1})} =1$.
\end{lemma}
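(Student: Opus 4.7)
The plan is to use Lemma~\ref{lemma:radial_solutions} to produce $u_k=b_k(|\cdot|)Y_k(\cdot/|\cdot|)$ as a classical solution on $\IB^d\setminus\{0\}$ in $L^2(\IB^d,|x|^{-1}dx)$, to upgrade this to full $H^1(\IB^d)$ regularity using the stronger integrability assumption $V\in L^p(\IB^d)$ with $p\ge d/2$, and then to derive the formula for $\partial_r b_k(1)$ via a Green's identity in which $u_k$ is used as a test function against an arbitrary $H^1$-solution $u$.

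For the regularity upgrade I would analyze the behavior of $b_k$ at $r=0$ by rewriting \eqref{id:radial_b} as a Volterra integral equation in terms of the Frobenius solutions $r^k$ and $r^{-(k+d-2)}$ of the unperturbed equation. Under $V\in L^p(\IB^d)$ with $p\ge d/2$, the perturbation is subordinate to the centrifugal term $k(k+d-2)/r^2$ in the Hardy sense, so standard iteration yields $b_k(r)\sim c\,r^k$ and $b_k'(r)\sim ck\,r^{k-1}$ near $0$; the $L^2(|x|^{-1}dx)$ condition given by Lemma~\ref{lemma:radial_solutions} rules out the singular Frobenius component. Multiplying \eqref{id:radial_b} by $r^{d-1}\overline{b_k}$ and integrating on $(\epsilon,1)$ gives the energy identity
\begin{equation*}
\int_\epsilon^1 r^{d-1}|b_k'|^2\,dr+k(k+d-2)\int_\epsilon^1|b_k|^2 r^{d-3}\,dr
 = \bigl[r^{d-1}b_k'\overline{b_k}\bigr]_\epsilon^1-\int_\epsilon^1 q\,|b_k|^2 r^{d-1}\,dr,
\end{equation*}
in which the $r=\epsilon$ boundary term vanishes as $\epsilon\to 0^+$ by the asymptotics and the potential term is controlled by H\"older plus Sobolev embedding. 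This produces $u_k\in H^1(\IB^d)$, and a cutoff argument shows $u_k$ is a proper weak solution of \eqref{id:calderon_direct_Yk} in the sense of \eqref{id:weak_def_dtn}. Uniqueness of $b_k$ among those producing an $H^1$-solution follows from Lemma~\ref{lemma:radial_solutions}: for $k\ge 1$ any $H^1$-function of the form $b(|\cdot|)Y_k$ satisfies $u_k\in L^2(\IB^d,|x|^{-2}dx)\subset L^2(\IB^d,|x|^{-1}dx)$ by the spherical Hardy inequality, and Lemma~\ref{lemma:radial_solutions} selects it uniquely.

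To derive the formula, let $u\in H^1(\IB^d)$ be any weak solution of \eqref{id:calderon_direct_Yk}. Taking $v=u_k$ in \eqref{id:weak_def_dtn} (whose trace is $Y_k$) and splitting the domain gives
\begin{equation*}
\br{Y_k,\partial_\nu u|_{\IS^{d-1}}}_{H^{1/2}\times H^{-1/2}}
=\lim_{\epsilon\to 0^+}\int_{|x|>\epsilon}\bigl(\nabla u\cdot\nabla u_k+Vu\,u_k\bigr)\,dx.
\end{equation*}
Since $u_k$ solves $-\Delta u_k+Vu_k=0$ classically on $\IB^d\setminus\{0\}$, integration by parts on $\{|x|>\epsilon\}$ reduces the right-hand side to $\int_{\IS^{d-1}}u\,\partial_\nu u_k\,dS-\int_{|x|=\epsilon}u\,\partial_r u_k\,dS$. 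The outer integral equals $b_k'(1)\|Y_k\|_{L^2(\IS^{d-1})}^2=b_k'(1)$. The inner integral vanishes as $\epsilon\to 0^+$: the asymptotics $\partial_r u_k\sim\epsilon^{k-1}$ together with the $\epsilon^{d-1}$ surface measure and the $H^1$-trace of $u$ on spheres of radius $\epsilon$ yield a product of order $\epsilon^{k+(d-2)/2}\to 0$, since $k>k_V\ge -(d-2)/2$. This establishes the claimed identity.

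The main obstacle will be the asymptotic analysis of $b_k$ near the origin in the low-regularity regime $V\in L^{d/2}$: ensuring convergence of the Volterra iteration and obtaining enough pointwise decay of $b_k'$ to legitimize the $\epsilon\to 0^+$ limit in the integration by parts is the delicate point, and crucially relies on the condition $k>k_V$ placing us above the spectral threshold where the centrifugal term controls the potential. Once these asymptotics are established, the remaining manipulations are standard cutoff and density arguments.
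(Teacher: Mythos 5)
Your route is genuinely different from the paper's, and it has a real gap at its load-bearing step. Everything downstream in your argument --- the vanishing of the $r=\epsilon$ boundary term in the energy identity and of the inner surface integral in the excised Green's identity --- rests on the pointwise asymptotics $b_k(r)\sim c\,r^k$, $b_k'(r)\sim ck\,r^{k-1}$, which you propose to extract by ``standard iteration'' of a Volterra equation built around the Frobenius solutions $r^k$ and $r^{-(k+d-2)}$. That scheme requires the classical integrability condition $\int_0^1 r|q(r)|\,dr<\infty$ at the regular singular point, and by H\"older this follows from $V\in L^p(\IB^d)$ only when $p>d/2$ strictly. At the endpoint $p=d/2$, which the hypothesis permits for $d\ge 3$ and which is the case of main interest, it can fail: take $q(r)=r^{-2}(\log(1/r))^{-1}$ for $r<1/2$ (bounded elsewhere), which is radially in $L^{d/2}(\IB^d)$ for $d\ge3$ and has $\normm{Q}<\infty$, yet $\int_0^1 r|q|\,dr=\infty$; the recessive solution then picks up a slowly varying correction (of order $(\log(1/r))^{-1/(2\kappa_k)}$ in this example), so the asymptotics as you state them are not merely unproved but false in general. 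The condition $k>k_V$ controls the problem only through the averaged quantity $\normm{Q}$ and does not restore the Volterra iteration; one would need quantitative Weyl-solution estimates in the spirit of \cite{AMR07} instead. You correctly flag this as ``the delicate point'' but supply no mechanism for it, and the proof cannot start without it. Two smaller issues: absorbing $\int q|b_k|^2r^{d-1}\,dr$ into the left-hand side of the energy identity requires splitting $V$ into a small-$L^{d/2}$ part plus a bounded part, not merely ``H\"older plus Sobolev''; and once $u_k$ is known to be a proper weak $H^1$ solution, the trace identity follows at once from the symmetry of the bilinear form in \eqref{id:weak_def_dtn} applied to $u$ and $u_k$, with no excision needed.

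The paper's proof avoids the origin entirely, which is why it survives the endpoint. It first shows that every $H^1_0(\IB^d)$ solution of the homogeneous problem \eqref{id:calderon_homogeneous} involves only spherical harmonics of degree $k\le k_V$, since each mode would yield a Dirichlet eigenfunction of the half-line operator with eigenvalue $-\kappa_k^2$, which \Cref{lemma:Schrodinger_exist} excludes for $\kappa_k>\beta_Q$. The Fredholm alternative then solves the reduced problem \eqref{id:calderon_homogeneous_f} for boundary data $Y_k\in\gH_k$ with $k>k_V$ (the orthogonality condition holds automatically by radiality of $V$), producing an $H^1$ solution whose $k$-th Fourier mode is the desired $b_k$; uniqueness and the formula for $\partial_r b_k(1)$ follow because any two $H^1$ solutions differ by a homogeneous solution, which is orthogonal to $\gH_k$. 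If you wish to keep your approach, you must replace the Frobenius analysis by asymptotics valid under $\normm{Q}<\infty$ alone; otherwise the soft variational argument is both shorter and covers the endpoint case.
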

\begin{proof} 
Let $u_0$ be an $H^1_0(\IB^d)$ solution of 
\begin{equation}  \label{id:calderon_homogeneous}
\left\{
\begin{array}{rlr}
-\Delta u_0 +V u_0 =& 0  &\text{ on }   \IB^d,\\
u_0 |_{  \IS^{d-1}} =& 0, &    \\
\end{array}\right.
\end{equation}

Since $V$ is radial, using a Fourier expansion in spherical harmonics of $u_0$, one can show that  $u_0(x) = \sum_{k=0}^\infty a_k(|x|) Y_k(x/|x|)$, for some $Y_k \in \gH_k$, 
where
$a_k(r)$ is a solution of \eqref{id:radial_b} with $a_k(1) =0$. Defining $v_k(t) := e^{-\frac{d-2}{2}t} a_k(e^{-t})$ and $Q$ from the radial profile of $V$ by \eqref{id:q_to_Q}, we obtain that $v_k$ solves
\begin{equation*}  
\left\{
\begin{array}{rlr}
-v_k''  + Q    v_k =& -\left(k+ \frac{d-2}{2}\right)^2 v_k  &\text{ on }   \R_+,\\
v_k(0) =& 0. &    \\
\end{array}\right.
\end{equation*} 
This implies that $v_k$ is a Dirichlet eigenfunction of the 1-$d$ operator \eqref{id:operator} with eigenvalue $-\left(k+ \frac{d-2}{2}\right)^2$. By \Cref{lemma:Schrodinger_exist}, we know that  $k+ \frac{d-2}{2} \le \beta_Q $, and therefore
\begin{equation} \label{is:u_fourier}
    u_0(x) = \sum_{0\le k \le k_V } a_k(|x|) Y_k(x/|x|),
\end{equation}
where we have used that $k_V = \beta_V - \frac{d-2}{2}$ and $\beta_Q \le \beta_V$, and imposed  $a_k =0$ whenever $k+ \frac{d-2}{2}>\beta_Q$.
Notice that any choice of $Y_k\in \gH_k$ in \eqref{is:u_fourier} gives a solution (not necessarily distinct) of the homogeneous problem \eqref{id:calderon_homogeneous}.

Problem \eqref{id:calderon_direct_ball} can be reduced to 
\begin{equation}  \label{id:calderon_homogeneous_f}
\left\{
\begin{array}{rlr}
-\Delta w +V w =& -V w_0   &\text{ on }   \IB^d,\\
w |_{  \IS^{d-1}} =& 0, &    \\
\end{array}\right.
\end{equation}
using the change of variables $u= w+w_0$, where $w_0$ is the unique harmonic function in $\IB^{d}$ satisfying $w_0|_{\IS^{d-1}} =f$ . Notice  that 
\begin{equation} \label{id:w_0_fourier}
w_0(x) = \sum_{k=0}^\infty |x|^{k} Y_k (x/|x|), \qquad Y_k = \Pi_{\gH_k} f,
\end{equation}
 where  $ \Pi_{\gH_k}$ stands for the $L^2(\IS^{d-1})$ projector to the subspace $\gH_k$ of spherical harmonics.
 
If $V\in L^{p}(\IB^d)$ with $p>1$ and $p\ge d/2$,  the standard theory of elliptic equations implies that \eqref{id:calderon_homogeneous_f} has a solution $w$ if and only if 
\[
(V w_0 ,u_0)_{L^2(\IB^{d})} = 0,
\]
for all $u_0$ that are solutions of the homogeneous problem \eqref{id:calderon_homogeneous},
see for example\footnote{ This is proved for bounded potentials, but the case $L^{d/2}(\IB^{d})$ can be proved using the same arguments.} \cite[Section 6.2.3]{evans}.
By the previous discussion we know that $u_0$ must satisfy \eqref{is:u_fourier}.
Therefore, using \eqref{id:w_0_fourier} and that $V$ is radial, one can verify that $(V w_0 ,u_0)_{L^2(\IB^{d})} = 0$ holds if we require $  \Pi_{\gH_k} f = 0$  for all $0\le  k\le k_V$.

Therefore, given $f$ such that 
\begin{equation*}
f = \sum_{k>k_V} Y_k,   \qquad Y_k \in \gH_k, 
\end{equation*}
there always exists a solution $w \in H^1(\IB^{d})$ of \eqref{id:calderon_homogeneous_f} and, as a consequence, a solution $u = w+w_0$ in $ H^1(\IB^{d})$ of \eqref{id:calderon_direct_ball}, even if \eqref{id:basic_assumption} does not necessarily hold. Using a Fourier expansion in spherical harmonics of $u$ in \eqref{id:calderon_direct_ball},  one can show that  $u$ must satisfy 
\[
u(x) = \sum_{k>k_V}^\infty b_k(|x|) Y_k(x/|x|) + u_0,
\]
 where $b_k$ solves \eqref{id:radial_b} with boundary conditions  $b_k(1) = 1$ if $k>k_V$, and $u_0$ is any homogeneous solution.  In particular it clearly holds by \eqref{is:u_fourier} that
 \begin{equation*}
    \partial_r b_k (1) = \frac{1}{\norm{Y_k}_{L^2(\IS^{d-1})}^2}(Y_k, \partial_\nu u|_{\IS^{d-1}} )_{L^2(\IS^{d-1})}.
\end{equation*}
 Also, since we can choose $u_0 = 0$  there is a unique solution $u_f \in H^1(\IB^d)$ of \eqref{id:calderon_direct_ball} such that
\[
u_f(x) = \sum_{k>k_V} b_k(|x|) Y_k(x/|x|).
\]
  In the particular case of $f= Y_k$ one obtains that  $u_k (x) : = u_f(x) =  b_k(|x|) Y_k(x/|x|)$. This is the only solution of separation of variables since any other solution differs in a homogeneous solution satisfying \eqref{is:u_fourier}.  This finishes the proof of the lemma.
\end{proof}

\section{The space \texorpdfstring{$\cV_d$}{Vd}}\label{sec:appendix_lorenz} 
 It is not difficult to show that the space $\cV_d$ defined in \eqref{id:def_Vd} contains $L_\rad^{d/2}(\IB^d)$. Note first that
\begin{equation*}  
\norm{V}_{L^{d/2}(\IB^d)} = |\IS^{d-1}|^{2/d} \, \norm{Q}_{L^{d/2}(\R_+)}.
\end{equation*}
Since
\begin{equation*}
 \int_{y}^{y+1}|Q(t)| \, dt  
 \leq  \left(\int_{y}^{y+1}|Q(t)|^{d/2} \, dt\right)^{2/d}
 \leq \,\norm{Q}_{L^{d/2}(\R_+)},
\end{equation*}
it follows that $\normm{Q}  \le \norm{Q}_{L^{d/2}(\R_+)}$, and as a consequence we obtain that 
\begin{equation}\label{est:Vinld2}
    \norm{V}_{\cV_d} \leq |\IS^{d-1}|^{(d-2)/d}    \norm{V}_{L^{d/2}(\IB^d)}.
\end{equation}
In fact, a stronger estimate holds for $d>2$.
\begin{lemma}  \label{lemma:lorentz}
Let $d>2$ and let $V \in L^{d/2, \infty}(\IB^d)$ be a (not necessarily radial \footnote{Notice that   $\norm{V}_{\cV_d}$ is well defined for non-radial potentials in \eqref{id:q_basic_assumption} even if, for convenience,  we have included the radial assumption in the definition of $\cV_d$.}) potential. Then
\begin{equation*}
   \norm{V}_{\cV_d} \leq C_d  \norm{V}_{L^{d/2, \infty}(\IB^d)}, 
\end{equation*}
where $C_d>0$ only depends of $d$.
\end{lemma}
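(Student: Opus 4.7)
The strategy is a direct dyadic estimate: on each annulus $A_j := \{2^{-(j+1)} < |x| < 2^{-j}\}$ the weight $|x|^{2-d}$ is essentially constant, and the volume of $A_j$ is comparable to $2^{-jd}$. Pairing these two scaling relations with the standard weak-type bound for $\int_{A_j} |V|\,dx$ will produce a constant that is uniform in $j$.

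More precisely, first I would control the unweighted integral of $|V|$ on any measurable set $E \subset \IB^d$. Using the layer cake formula and the defining bound $|\{x : |V(x)| > \lambda\}| \le \lambda^{-d/2}\norm{V}_{L^{d/2,\infty}(\IB^d)}^{d/2}$, for any $\Lambda>0$ one has
\begin{equation*}
\int_E |V|\,dx \le \Lambda |E| + \int_\Lambda^\infty |\{|V|>\lambda\}|\,d\lambda \le \Lambda |E| + \frac{\norm{V}_{L^{d/2,\infty}(\IB^d)}^{d/2}}{(d/2-1)\Lambda^{d/2-1}}.
\end{equation*}
Since $d>2$ the second term is finite, and optimising in $\Lambda$ by choosing $\Lambda = \norm{V}_{L^{d/2,\infty}(\IB^d)} |E|^{-2/d}$ yields
\begin{equation*}
\int_E |V|\,dx \le \frac{d}{d-2}\,\norm{V}_{L^{d/2,\infty}(\IB^d)}\,|E|^{1-2/d}.
\end{equation*}

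Then, on each annulus $A_j$ I would apply this with $E=A_j$. Noting that $|x|^{2-d}\le 2^{(j+1)(d-2)}$ on $A_j$ and that $|A_j| = c_d(2^{-jd}-2^{-(j+1)d}) \le c_d\,2^{-jd}$, the previous estimate gives
\begin{equation*}
\int_{A_j} |V(x)|\,|x|^{2-d}\,dx \;\le\; 2^{(j+1)(d-2)} \cdot \frac{d}{d-2}\,\norm{V}_{L^{d/2,\infty}(\IB^d)}\,(c_d\,2^{-jd})^{1-2/d}.
\end{equation*}
The powers of $2$ combine to $2^{(j+1)(d-2)}\cdot 2^{-j(d-2)} = 2^{d-2}$, which is independent of $j$. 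Taking the supremum in $j\in\N_0$ yields $\norm{V}_{\cV_d} \le C_d\,\norm{V}_{L^{d/2,\infty}(\IB^d)}$ with $C_d := \frac{d\cdot 2^{d-2}\,c_d^{(d-2)/d}}{d-2}$.

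There is no real obstacle: the only point requiring a little attention is the use of $d>2$, both to have $d/2>1$ (so that the weak-type integral $\int_\Lambda^\infty \lambda^{-d/2}d\lambda$ converges and the constant $d/(d-2)$ is finite) and to have the geometric cancellation of the $j$-dependent factors. For $d=2$ this argument breaks down at both points, which is consistent with the fact that $L^{1,\infty}$ does not embed into $L^1_{\mathrm{loc}}$.
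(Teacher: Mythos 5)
Your proof is correct, and it takes a genuinely different (and more elementary) route than the paper. The paper applies the H\"older inequality for Lorentz spaces, $\norm{\chi_{A_j}|\cdot|^{2-d}\,V}_{L^1}\le C\,\norm{\chi_{A_j}|\cdot|^{2-d}}_{L^{\frac{d}{d-2},1}}\norm{\chi_{A_j}V}_{L^{\frac{d}{2},\infty}}$, and then computes the $L^{\frac{d}{d-2},1}$ norm of the weight on each annulus via its distribution function, finding it is $O(2^{d-2})$ uniformly in $j$. You instead replace the Lorentz--H\"older step by the elementary layer-cake embedding $\int_E|V|\le \frac{d}{d-2}\norm{V}_{L^{d/2,\infty}}|E|^{1-2/d}$ (your optimization of $\Lambda$ is exact and the constant $\frac{d}{d-2}$ is right), and you bound the weight pointwise by $2^{(j+1)(d-2)}$ on $A_j$; the two powers of $2$ then cancel exactly as in the paper. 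What your approach buys is self-containedness: it avoids citing the Lorentz-space H\"older inequality of Hunt and the explicit distribution-function computation. What it gives up is negligible here, since the weight is essentially constant on each dyadic annulus, so the cruder ``sup times $|E|^{1-2/d}$'' bound loses nothing in the scaling. Both arguments correctly isolate $d>2$ as the point of failure in dimension $2$, for the same two reasons you state.
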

\begin{proof} 
Let $A_j = \{x\in \R^d: 2^{-j-1} <|x|<2^{-j}\}$ and denote by $\chi_{A_j}$ the characteristic function of the set $A_j$.
    From \eqref{id:q_basic_assumption}, since $\frac{d}{d-2}$ is the Hölder conjugate exponent of $\frac{d}{2}$, it follows using Hölder inequality for Lorentz spaces  \cite{Hu1966} that
    \begin{align*}   
\norm{V}_{\cV_d} 
\le C_d\sup_{j\in \N_0} \norm{\chi_{A_j} |\cdot|^{2-d}}_{L^{\frac{d}{d-2},1}(\IB^d)} \norm{\chi_{A_j} V}_{L^{\frac{d}{2},\infty}(\IB^d)} \\
\le C_d \left( \sup_{j\in \N_0} \norm{\chi_{A_j} |\cdot|^{2-d}}_{L^{\frac{d}{d-2},1}(\IB^d)} \right) \norm{ V}_{L^{\frac{d}{2},\infty}(\IB^d)} .
\end{align*}  
To finish, we need to show that the factor with the $\sup_{j\in \N_0}$  is finite. The norm of the Lorentz space $L^{\frac{d}{d-2},1}(\IB^d)$ is given by
\[
  \norm{\chi_{A_j} |\cdot|^{2-d}}_{L^{\frac{d}{d-2},1}(\IB^d)} = \frac{d}{d-2}\int_0^\infty   |g_j(t)|^{\frac{d-2}{d}} \,dt,
\]
where $g_j(t)$ is the distribution function of $\chi_{A_j}|x|^{2-d}$, \textit{i. e.}
\[ 
g_j(t) = |\{x \in \IB^d:  2^{-j-1}<|x|<2^{-j} ,\, |x|^{2-d}>t   \} | .
\]
From an explicit computation of $g_j(t)$  it follows that
\begin{equation*}
    g_j(t) \le
    \frac{1}{d} \left(1-\frac{1}{2^d} \right)|\IS^{d-1}|  
    \begin{cases}
    2^{-dj}   \quad  &0 < t \le 2^{(j+1)(d-2)}, \\ 
    0          \quad   &2^{(j+1)(d-2)} < t <\infty .
    \end{cases}
\end{equation*}
Hence
\[
  \norm{\chi_{A_j} |\cdot|^{2-d}}_{L^{\frac{d}{d-2},1}(\IB^d)} \le C_d \left (2^{-dj} \right )^{\frac{d-2}{d}}  2^{(j+1)(d-2)} = 2^{d-2}C_d,
\]
which proves that
\[
\sup_{j\in \N_0} \norm{\chi_{A_j} |\cdot|^{2-d}}_{L^{\frac{d}{d-2},1}(\IB^d)} = C_d <\infty,
\]
and finishes the proof of the lemma.
\end{proof}
The previous lemma shows that the set of radial potentials in the Lorentz space $L^{d/2,\infty}(\IB^d) $ are contained in $\cV_d$ with $d>2$. Among other things, in the radial case this implies that all potentials $V(x) = |x|^{-2}f(|x|)$ with $f$ bounded, belong to $\cV_d$. The inclusion $L^{d/2,\infty}(\IB^d) \subset \cV_d$ is strict, since $\cV_d$ also contains any radial $L^1(\IB^d)$ potential which vanishes in a neighborhood of the origin.
It is not clear that this inclusion holds in dimension $d=2$. Indeed, we can not apply Hölder inequality for Lorentz spaces in this case, (since $L^{\infty, q}(\IB^2)= \{0\}$ for $q \not= \infty$). Nevertheless, note that the critical potential $V(x)= c |x|^{-2}$ belongs to $\cV_2 $ (and also to $L^{1,\infty}(\IB^2)$). In principle one can only guarantee in this case the trivial inclusion $L^1(\IB^2)\subset\cV_2$.

\begin{remark}\label{remark:norms_2}
In general,  the Schrödinger equation (\ref{id:calderon_direct}) is not (uniquely) solvable for potentials in $ L^{d/2, \infty}(\IB^d)$. Nevertheless, for potentials $V$ with a {\it{small norm}} living in the  so-called Fefferman-Phong class $F_p \supset L^{\frac{d}{2}, \infty}(\R^d)$ with $ \frac{d-1}{2}<p<\frac{d}{2}$ and $d\ge 3$, it is shown in \cite[Proof of Lemma 2]{Ch90},  that the DtN map $\Lambda_V$ is always well defined : using a Poincaré-type inequality, we see that the bilinear form related to the operator $H= -\Delta+V $ is continuous and coercive in $H_0^1(\IB^d)$. It follows that $0$ is not a Dirichlet eigenvalue of $H$. Moreover, Chanillo shows that the map $V \mapsto \Lambda_V$ is injective. This last result is closely related to the unique continuation principle (UCP). Generically, (UCP) does not hold for potentials belonging to these Lorentz spaces, (see the nice counterexamples in \cite{KT02}), except for potentials with a small norm (\cite{JK85}).
\end{remark}

%%%%%%%%%%%%%%%%%%%%%%%%%%%%%%%%%%%%%%%%%%%

\bibliographystyle{myalpha}

\bibliography{references_radial_calderon}

\end{document}